\newtheorem{theorem}{Theorem}[section]
\newtheorem{definition}[theorem]{Definition}
\newtheorem{example}[theorem]{Example}
\newtheorem{proposition}[theorem]{Proposition}
\newtheorem{lemma}[theorem]{Lemma}
\newtheorem{remark}[theorem]{Remark}
\newtheorem{corollary}[theorem]{Corollary}
\newtheorem{exercise}[theorem]{Exercise}
\newcommand{\altA}{\mathcal{A}}
\renewcommand{\AA}{\mathbb{A}}
\newcommand{\CC}{\mathbb{C}}
\newcommand{\altC}{\mathcal{C}}
\newcommand{\EE}{\mathcal{E}}
\newcommand{\GG}{\mathbb{G}}
\newcommand{\II}{\mathcal{I}}
\newcommand{\kk}{\CC}
\newcommand{\LL}{\mathbb{L}}
\newcommand{\altL}{\mathcal{L}}
\newcommand{\NN}{\mathbb{N}}
\newcommand{\PP}{\mathbb{P}}
\newcommand{\PPP}{\mathfrak{P}}
\newcommand{\altP}{\mathcal{P}}
\newcommand{\QQ}{\mathbb{Q}}
\newcommand{\altV}{V}
\newcommand{\WW}{\mathcal{T}r(W)}
\newcommand{\WWW}{\mathfrak{Tr}(W)}
\newcommand{\XXX}{\mathfrak{X}}
\newcommand{\YYY}{\mathfrak{Y}}
\newcommand{\ZZZ}{\mathfrak{Z}}
\newcommand{\ZZ}{\mathbb{Z}}
\newcommand{\Mst}{\mathfrak{M}}
\newcommand{\Msp}{\mathcal{M}}
\newcommand{\unit}{\mathbbm{1}}
\newcommand{\ICS}{\mathcal{IC}}
\newcommand{\DTS}{\mathcal{DT}}
\newcommand{\AAA}{{\mathbb{A}^1}}
\newcommand{\muu}{{\hat{\mu}}}
\newcommand{\fst}{\mathfrak{f}}
\newcommand{\gst}{\mathfrak{g}}
\DeclareMathOperator{\Hom}{Hom}
\DeclareMathOperator{\End}{End}
\DeclareMathOperator{\Ext}{Ext}
\DeclareMathOperator{\Vect}{Vect}
\DeclareMathOperator{\rep}{-Rep}
\DeclareMathOperator{\Ka}{K}
\DeclareMathOperator{\Aut}{Aut}
\DeclareMathOperator{\Perv}{Perv}
\DeclareMathOperator{\MHM}{MHM}
\DeclareMathOperator{\DT}{DT}
\DeclareMathOperator{\Sym}{Sym}
\DeclareMathOperator{\Sch}{Sch}
\DeclareMathOperator{\Spec}{Spec}
\DeclareMathOperator{\Gl}{GL}
\DeclareMathOperator{\id}{id}
\DeclareMathOperator{\Jac}{Jac}
\DeclareMathOperator{\Tr}{Tr}
\DeclareMathOperator{\Crit}{Crit}
\DeclareMathOperator{\Con}{Con}
\DeclareMathOperator{\rk}{rk}
\DeclareMathOperator{\pr}{pr}
\DeclareMathOperator{\PGl}{PGL}
\DeclareMathOperator{\Mat}{Mat}
\DeclareMathOperator{\Mor}{Mor}
\DeclareMathOperator{\MOR}{\underline{Mor}}
\DeclareMathOperator{\Obj}{Obj}
\DeclareMathOperator{\Coh}{Coh}
\DeclareMathOperator{\Gr}{Gr}
\DeclareMathOperator{\MF}{MF}
\DeclareMathOperator{\Bl}{Bl}
\DeclareMathOperator{\Cone}{Cone}
\DeclareMathOperator{\Stab}{Stab}
\DeclareMathOperator{\nilp}{nilp}
\DeclareMathOperator{\coker}{coker}
\DeclareMathOperator{\im}{im}
\DeclareMathOperator{\Fr}{Fr}
\DeclareMathOperator{\Supp}{Supp}
\title{An introduction into (motivic) Donaldson--Thomas theory}
\author{Sven Meinhardt}
\begin{document}

\begin{abstract}
The aim of the paper is to provide a rather gentle introduction into Donaldson--Thomas theory using quivers with potential. The reader should be familiar with some basic knowledge in algebraic or complex geometry. The text contains many examples and exercises to support the process of understanding the main concepts and ideas.     	
\end{abstract}

\maketitle

\tableofcontents

\section{Introduction}

The theory of Donaldson--Thomas invariants  started around 2000 with the seminal work of R.\ Thomas \cite{Thomas1}. He associated  integers to those moduli spaces of sheaves on a compact Calabi--Yau 3-fold which only contain stable sheaves. After some years, K.\ Behrend realized in \cite{Behrend} that these numbers, originally written as ``integrals'' over algebraic cycles or characteristic classes, can also be obtained by an integral over a constructible function, the so-called Behrend function, with respect to the measure given by the Euler characteristic. This new point of view did not only extend the theory to non-compact moduli spaces but revealed  also the ``motivic nature'' of this new invariant. It has also been realized that quivers with potential  provide another class of examples to which Donaldson--Thomas theory applies.  Starting around 2006, D. Joyce \cite{JoyceI},\cite{JoyceCF},\cite{JoyceII},\cite{JoyceIII},\cite{JoyceMF},\cite{JoyceIV} and Y.\ Song \cite{JoyceDT} extended the theory using all kinds of ``motivic functions'' to produce (possibly rational) numbers even in the presence of semistable objects which is the generic situation when classifying objects in abelian categories. Around the same time, M.\ Kontsevich and Y.\ Soibelman \cite{KS1},\cite{KS2},\cite{KS3} independently proposed a theory producing even motives, some sort of refined ``numbers'', instead of simple numbers, also in the presence of semistable objects. The technical difficulties occurring in their approach disappear in the special situation of representations of quivers with potential. The case of zero potential has been  intensively studied by M.\ Reineke in a series of papers \cite{Reineke2},\cite{Reineke3},\cite{Reineke4}.
Despite some computations of motivic or even numerical Donaldson--Thomas invariants for quivers with  potential (see \cite{BBS},\cite{DaMe1},\cite{DaMe2},\cite{MMNS}), the true nature of Donaldson--Thomas invariants for quiver with potential remained mysterious for quite some time. A full understanding has been obtained recently and is the content of a series of papers  \cite{DavisonMeinhardt4},\cite{DavisonMeinhardt3},\cite{MeinhardtReineke},\cite{Meinhardt4}. \\
The present text aims at giving a gentle introduction to Donaldson--Thomas theory in the case of quiver with potential. We have two reasons for our restriction to quivers. Firstly, so-called orientation data will not play any role, and secondly, we do not need to touch derived algebraic geometry. Apart from this, many important ideas and concepts are already visible in the case of quiver representations, and since the theory is fully understood, we belief that this is a good starting point for your journey towards an understanding of Donaldson--Thomas theory. There are more survey articles available focusing on different aspects of the theory. (see  \cite{JoyceDT},\cite{KS3},\cite{Szendroi}) \\
Let us give a short outline of the paper. The next section starts very elementary by discussing the problem of classifying objects. The objects which are of interest to us form an abelian category although many ideas of section 2  also apply to ``non-linear'' moduli problems. We study in detail the difficulties arising from the construction of moduli spaces and develop slowly the concept of a (moduli) stack. Although the theory of stacks is rather rich and complicated, we can restrict ourselves to quotient stacks throughout this paper. Hence, a good understanding of  a quotient stacks is inevitable. We try to illustrate this concept by giving important examples. We should mention that only very little knowledge of algebraic or complex geometry is needed. In many cases, you can easily replace ``schemes'' with ``varieties'' or ``complex manifolds''.\\
Section 3 provides the background on quivers and their representations. The point of view taken here is that quivers are the categorical (noncommutative) analogue of polynomial algebras in ordinary commutative algebra. In other words, they are a useful tool for practical computations when dealing with linear categories, but at the end of the day the result should only depend on the linear category and not its presentation as a quotient of the path category of a quiver by some ideal of relations. The relations important in this paper are given by noncommutative partial derivatives of a so-called potential. \\
The next two sections provide the language and the framework to formulate Donaldson--Thomas theory in section 6. We start in section 4 with the concept of ``motivic theories''. The best example the reader should have in mind are constructible functions. It should be clear that constructible functions can be pulled back and multiplied. Using fiberwise integrals with respect to the Euler characteristic, we can even push forward constructible functions. Moreover, every locally closed subscheme/subvariety/submanifold determines a constructible function, namely its characteristic function. In a nutshell, a motivic theory is just a generalization of this associating to every scheme $X$ an abelian group $R(X)$ of ``functions'' on $X$ which can be pulled back, pushed forward and multiplied. Moreover, to every locally closed subscheme in $X$ there is a ``characteristic function'' in $R(X)$ such that the characteristic function of a disjoint union is the sum of the characteristic function of its summands. Its is this property what makes a theory of generalized functions ``motivic''. As usual in algebraic geometry, the term ``function'' should be used with some care. Every function on say a complex variety $X$ determines a usual function from the set of points in $X$ to the coefficient ring $R(\mbox{point})$ of our theory, but this is not a one-one correspondence. \\
In section 5 we introduce vanishing cycles. We do not assume that the reader is familiar with any theory of vanishing cycles. As in the previous section, a vanishing cycle is just  an additional  structure on motivic theories formalizing the properties of ordinary classical vanishing cycles. The Behrend function mentioned at the beginning of this introduction provides a good example of a vanishing cycle on the theory of constructible functions. In fact, we will construct in a functorial way two vanishing cycles associated to a given motivic theory. The first construction is rather stupid, but the second one essentially covers all  known nontrivial examples. At the end of sections 4 and 5 we extend motivic theories and vanishing cycles to quotient stacks as quotient stacks arise naturally in moduli problems. There is a way to circumvent stacks in Donaldson--Thomas theory by considering framed objects, but we belief that the usual approach of using stacks is more conceptual and should be known by anyone who wants to understand Donaldson--Thomas theory seriously.\\
In the last section 6 we finally introduce Donaldson--Thomas functions and invariants. After stating the main results, we consider many examples to illustrate the theory. Finally, we develop some tools used in Donaldson--Thomas theory such as Ringel--Hall algebras, an important integration map and the celebrated wall-crossing formula. \\

The reader will realize shortly that the text contains tons over exercises and examples. Most of the exercises are rather elementary and require some elementary computations and standard arguments. Nevertheless, we suggest to the reader to do them carefully in order to get your hands on the subject and to obtain a feeling about the objects involved. There is a lot of material in this text which is not part of the standard graduate courses at universities, and if you are not already familiar with the subject you certainly need some practice as we cannot provide a deep and lengthy discussion of the material presented here.\\

\textbf{Acknowledgments.} The paper is an expanded version of a couple of lectures the author has given in collaboration with Ben Davison at KIAS in February 2015. He is more than grateful to  Michel van Garrel and Bumsig Kim for giving him the opportunity to visit this wonderful place. A lot of work on this paper has also been done at the University of Hong Kong, where the author gave another lecture series on Donaldson--Thomas theory. The author wants to use the opportunity to thank Matt Young for being such a wonderful host. He also wants to thank Jan Manschot for keeping up the pressure to finish this paper and for offering the opportunity to publish the paper. Finally, the author is very grateful to Markus Reineke for giving him as much support as possible.

\section{The problem of constructing a moduli space}

\subsection{Moduli spaces}

Let us start by recalling the general idea of a moduli space. Depending on the situation, mathematicians are trying to classify objects of various types. The general pattern is the following. There is some set (or class)   of objects and isomorphisms between two objects. Such a structure is called a groupoid. A groupoid is a category with every morphism being an isomorphism. If the set of objects has cardinality one, a groupoid is just a group. The other extreme is a groupoid such that every morphism is the identity morphism of some object. Such groupoids are in one-to-one correspondence with ordinary sets. Hence, a groupoid interpolates between sets and groups. There are two main sources of groupoids.
\begin{example}  \rm Let $X$ be a topological space. The fundamental groupoid $\pi_1(X)$ is the groupoid having the points of $X$ as objects, and given two points $x,y\in X$, the set of morphisms from $x$ to $y$ is the set of homotopy classes of paths from $x$ to $y$. Fixing a base point $x\in X$, the usual fundamental group $\pi_1(X,x)$ is just the automorphism group of $x$ considered as an object in the groupoid $\pi_1(X)$. Denote by $\pi_0(X)$ the set of path connected components, i.e.\ the set of objects in $\pi_1(X)$ up to isomorphism. 
\end{example}
\begin{example}  \rm
Given a category $\altC$, one can consider the subcategory $\II so(\altC)$ of all isomorphisms in $\altC$. Thus, $\II so (\altC)$ is a groupoid, and $\altC/_\sim$ denotes the set of objects in $\altC$ up to isomorphism.     
\end{example}
These two examples are related as follows. To every (small) category one can construct a topological space $X_\altC$ - the classifying space of $\altC$ - such that $\pi_1(X_\altC)\cong \II so (\altC)$ and $\pi_0(X_\altC )=\altC/_\sim$. \\
Let us come back to the classification problem, say of objects in $\altC$ up to isomorphism. The problem is to describe the set $\altC/_\sim$. If it is discrete in a reasonable sense, one tries to find a parameterization by less complicated (discrete) objects. This applies for instance to the classification of semisimple algebraic groups or finite dimensional representations of the latter. In many other situations, $\altC/_\sim$ is uncountable, and one wants to put a geometric structure on the set $\altC/_\sim$ to obtain  a ``moduli space''. However, if for instance $\altC/_\sim$ has the cardinality of the field of complex numbers, one can always choose a bijection $\altC/_\sim \cong M$ to the set of points of any complex variety or manifold $M$ of dimension greater than zero. Pulling back the geometric structure of $M$ along this bijection, we can equip $\altC/_\sim$ in many different (non-isomorphic) ways with a structure of a complex manifold. Hence, we should ask:\\

\textbf{Question:} Is there a natural geometric structure on $\altC/_\sim$? What does ``natural'' actually mean?\\

There is a very beautiful idea of what ``natural'' should mean, and which applies to many situations. Assume there is a notion of a family of objects in $\altC$ over some ``base'' scheme/variety/(complex) manifold $S$, i.e.\ some object on $S$ which has ``fibers'' over $s\in S$, and these fibers should be objects in $\altC$. 
\begin{example}  \rm
 Given a $\CC$-algebra $A$, a family of finite dimensional $A$-representations is a (holomorphic) vector bundle $V$ on $S$ and an $\CC$-algebra homomorphisms $A\to \Gamma(S,\mathcal{E}nd(V))$ from $A$ into the algebra of sections of the endomorphism bundle of $V$. 
\end{example}
\begin{example}  \rm
 Given a scheme/variety/manifold $X$ over $\CC$ and some parameter space $S$, a family of coherent sheaves on $X$ parametrized by $S$ is just a coherent sheaf $E$ on $S\times X$ which is flat over $S$. The latter condition ensures that taking fibers and pull-backs of families behaves well. If $E$ is a family of zero dimensional sheaves on $X$, i.e.\ if the projection $p:\Supp(E)\to S$ has zero-dimensional fibers, flatness of $E$ over $S$ is equivalent to the requirement that $E$ is locally free over $S$. Using the coherence of $E$ once more, one can show that $p:\Supp(E)\to S$ is a finite morphism and if $X=\Spec A$ is affine, $E$ is completely determined by the vector bundle $V:=p_\ast E$ on $S$ together with a $\CC$-algebra homomorphism $A\to \Gamma(S,\EE nd(V))$. From that perspective, the previous example can be seen as a non-commutative version, namely families of zero dimensional sheaves on the non-commutative affine scheme $\Spec A$ for $A$ being a $\CC$-algebra. 
\end{example}
\begin{example}  \rm A $G$-homogeneous space with respect to some (algebraic) group $G$ is a scheme $P$ with a right $G$-action such that $P\cong G$ as varieties with right $G$-action, where $G$ acts on $G$ by right multiplication. A (locally trivial) family of $G$-homogeneous spaces over $S$ is defined as a principal $G$-bundle on $S$.  
\end{example}
Once a family is given, by taking the ``fiber'' over $s\in S$ we get an object in $\altC$ and, hence, a point in $\altC/_\sim$. Varying $s\in S$, we end up with a map $u:S\to \altC/_\sim$. Moreover, we see that the pull-back of a family on $S$ along a morphism $f:S'\to S$ induces a morphism $u':S'\to \altC/_\sim$ such that $u'=u\circ f$. Coming back to the question formulated above, we can now be more precise by asking:  \\

\textbf{Question:} Is there a structure of a variety or scheme on $\altC/_\sim$ such that for every family of objects over any $S$, the induced map $S\to \altC/_\sim$ is a morphism of schemes? If so, is there any way to get back the family by knowing only the morphism $S\to \altC/_\sim$?\\

If the first question has a positive answer, we call $\Msp=(\altC/_\sim,\mbox{scheme structure}) $ a coarse moduli space for $\altC$. If the second part of the question is also true, we should be able to (re)construct a ``universal'' family on $\Msp$ by considering the map $\id:\Msp\to \Msp$. Moreover, given a map $u':S'\to \Msp$ such that $u'=u\circ f$ for some map $f:S'\to S$, the family on $S'$ should be the pull-back of the family on $S$ associated to $u$ by uniqueness.   As every morphism $u:S\to \Msp$ has an obvious factorization $S\xrightarrow{u} \Msp\xrightarrow{\id} \Msp$, we finally see that every family on $S$ must be the pull-back of the ``universal'' family on $\Msp$. In such case, we call $\Msp$ a fine moduli space. 

\begin{example}  \rm
Let $\altC=\Vect_\CC$ be the category of finite dimensional $\CC$-vector spaces. A (locally trivial) family of finite dimensional vector spaces is just a vector bundle on some parameter space $S$. As a vector space is classified by its dimension, we can put the simplest scheme structure on $\Vect_\CC/_\sim=\NN$ by thinking of $\NN$ as a disjoint union of countably many copies of $\Spec\CC$. Given a vector bundle $V$, we obtain a well-defined morphism $S\to \NN$ mapping $s\in S$ to the copy of $\Spec\CC$ indexed by the dimension of the fiber $V_s$ of $V$ at $s$. The scheme $\NN$ is a course moduli space, but apart from the zero dimensional case, it can never be a fine moduli space. Indeed, there is an obvious and essentially unique vector bundle on $\NN$ inducing the identity map $\NN\to \NN$, but a vector bundle on $S$ can never be the pull-back of the one on $\NN$ unless it is constant. Thus, $\NN$ is not a fine moduli space.  
\end{example}
These are also bad news for  our previous examples concerning representations of an algebra $A$ or sheaves on a variety $X$. Indeed, for $A=\CC$ or $X=\Spec \CC$, we are back in the classification problem of finite dimensional $\CC$-vector spaces.  
\begin{example}  \rm
Similar to the previous example, we see that the classification problem for homogeneous $G$-spaces has only a coarse moduli space given by $\Spec\CC$. 
\end{example}
There are several strategies to overcome the difficulty of constructing a fine moduli space.
\begin{example}[rigid families] \rm
One possibility is to rigidify families of objects. For example, instead of considering all vector bundles we could also restrict ourselves to constant vector bundles. In this particular case, $\NN$ is even a fine moduli space. However, in many situations one wants to glue families together to form families of objects on bigger spaces. This is incompatible with the concept of rigidity, and we will not follow this path. 
\end{example}

\begin{example}[weaker equivalence] \rm \label{weaker_equivalence}
Instead of classifying objects up to isomorphism, we could allow weaker equivalences. For example, we could identify to families $V^{(1)}$ and $V^{(2)}$ (over $S$) of vector spaces or representations of an algebra  $A$ if there is a line bundle $L$ on $S$ such that $V^{(2)}=V^{(1)}\otimes_{\mathcal{O}_S} L$. By doing this, we can always replace a rank one bundle with the trivial rank one bundle. Hence, the moduli space $\Spec \CC$ of one-dimensional vector spaces is a fine moduli space.  
\end{example}
\begin{example}[projectivization] \rm \label{projectivization}
 Similar to families of vector spaces of dimension $r$, one could look at locally trivial families $\altP$ of projective spaces $\PP^{r-1}$. The transition functions between local trivializations are regular functions with values in $\Aut(\PP^{r-1})=\PGl(r)$. Every vector bundle $V$ of rank $r$ provides such a bundle by taking $\altP:=\PP(V)$, the bundle of lines or hyperplanes in $V$. Two vector bundles $V^{(1)}$, $V^{(2)}$ define isomorphic bundles $\PP(V^{(1)})\cong \PP(V^{(2)})$ if and only if $V^{(2)}=V^{(1)}\otimes_{\mathcal{O}_S} L$ for some line bundle $L$ on $S$, providing the bridge to the previous example. However, not every $\PP^{r-1}$-bundle $\altP$ can be realized  as $\PP(V)$ for some vector bundle $V$ on $S$. Given a $\PP^{r-1}$-bundle $\altP$, there is an associated locally trivial bundle $\EE_{\altP}$  of $\CC$-algebras isomorphic to $\End_\CC(\CC^r)\cong\Mat_\CC(r,r)$. Conversely, every locally trivial bundle $\EE$ of $\CC$-algebras isomorphic to $\Mat_\CC(r,r)$ defines an associated $\PP^{r-1}$-bundle $\altP_{\EE}$ as the transition functions of $\EE$ must be in $\Aut(\Mat_\CC(r,r))=\PGl(r)$. Thus, we have an equivalence of categories between locally trivial $\PP^{r-1}$-bundles and locally trivial $\Mat_\CC(r,r)$-bundles. If the $\PP^{r-1}$-bundle $\altP$ is given by $\PP(V)$ for a vector bundle $V$ of rank $r$, then $\EE_{\PP(V)}=\EE nd(V)$. Given a $\CC$-algebra $A$, we can study families given by a locally free $\PP^{r-1}$-bundle $\altP$ or equivalently a locally free $\Mat_\CC(r,r)$-bundle $\EE$ and a homomorphism of algebras $A\to \Gamma(S,\EE)$. If $A=\CC$, there is only  a fine moduli space for $r=1$ as every $\PP^0$-bundle must be constant. If the algebra $A$ is more complicated, there are also fine moduli spaces for $r>1$, but only for objects which are simple in a suitable sense. For $A=\CC$ there are no simple vector spaces of dimension $r>1$.     
\end{example}
As we have seen, the construction of fine moduli spaces can only be done in a few cases and severe restrictions. But even if we were only interested in coarse moduli spaces, a standard problem will occur as the following example shows.
\begin{example}  \rm \label{S_equivalence} Instead of looking at representations of  $A=\CC$, we enter the next level of complexity by looking at finite dimensional representations of $A=\CC[z]$. A one-dimensional representation $V$ is determined by the value of $z$ in $\End_\CC(V)\cong \CC$. In other words, a coarse moduli space is given by the complex affine line $\AAA$. Still, we have to face the problem discussed before that a line bundle on $S$ with $z$ acting by multiplication with a fixed number $c\in \CC$ could almost never be the pull-back of a universal family under the constant map $S\to \AAA$ mapping $s\in S$ to $c\in \AAA$. Let us ignore the problem of finding a fine moduli space and continue with two-dimensional representations. Consider the trivial rank 2 bundle on $S=\AAA$ with $z$ acting via the nilpotent matrix
 \[  { 1 \;\; s \choose 0 \;\; 1 } \]
in the fiber over $s\in S=\AAA$. The representations for $s\not=0$ are all isomorphic to each other, and our ``classifying map'' $u:S\to \Msp_2$ to a coarse moduli space $\Msp_2$ of rank 2 representations must be constant on $S\!\setminus\!\!\{0\}$. For $s=0$ we obtain a different representation and $u(0)$ must be another point in $\Msp_2$ if the latter parametrizes isomorphism types. However,  such a discontinuous map $u:S\to \Msp_2$ cannot exist, and we have to abandon the idea of finding a coarse moduli space parameterizing isomorphism classes. One can show that a ``reasonable'' coarse moduli space is given by the GIT-quotient $\Mat_\CC(2,2)/\!\!/\Gl(2)$ which is realized as $\Spec \CC[\Mat_{\CC}(2,2)]^{\Gl(2)}\cong \AA^2$ and similar for higher ranks. The classifying map $S\to \AA^2$ will map $s\in S$ to the unordered pair of eigenvalues of the $z$-action in the fiber over $s$. Such an unordered pair of eigenvalues is determined by the sum (corresponding to the trace) and the product (corresponding to the determinant) of the eigenvalues and similar for higher ranks. Therefore, $\Msp$ will parametrize unordered direct sums of one-dimensional representations. In other words, by passing from $\altC/_\sim$ to $\Msp$, we identify each representation with the (unordered) direct sum of its simple Jordan--H\"older factors. Representations having the same Jordan--H\"older factors, i.e.\ corresponding  to the same point in $\Msp$, are often called S-equivalent\footnote{The ``S'' in ``S-equivalent'' refers to semisimple, i.e.\ sums of simples, and should not be confused with our notation of a base of a family.}.   
\end{example}
Let us summarize the lessons we have learned in the previous examples:
\begin{enumerate}
 \item Constructing coarse moduli spaces has only a chance if we do not parametrize objects up to isomorphism but up to the weaker S-equivalence. In other words, classifying objects up to isomorphism is only possible for simple objects, i.e. objects without subobjects.
 \item The construction of a universal family on the moduli space of simple objects might only work if we identify two families under a weaker equivalence (twist with a line bundle) or pass to some projectivization. 
\end{enumerate}
We suggest to the reader to check these statements in the previous examples.

\subsection{Stability conditions}

Even though the set of objects in  $\altC$ up to isomorphism might be very large, the set of (isomorphism classes of) simple objects can be quite small, even finite. Thus, the ``coarse'' moduli space would not deliver much insight into the set of isomorphism types  in $\altC$. However, there is a simple but clever idea to overcome this problem. Instead of looking at $\altC$, we should ``scan'' $\altC$ by means of a collection $(\altC_\mu)_{\mu\in T}$ of ``small'' full subcategories $\altC_\mu\subseteq \altC$. An object which might be far away from being simple or semisimple (direct sum of simples) can become semisimple or even simple in $\altC_\mu$. By doing this, we can distinguish many S-equivalent objects either because they live in different subcategories or they live in the same subcategory $\altC_\mu$ but have different Jordan--H\"older filtrations taken in $\altC_\mu$. This brilliant idea is the essence of the concept of stability conditions. The following definition is due to Tom Bridgeland. However, there are more general definitions of stability conditions.  
\begin{definition} \hfill
\begin{enumerate}
\item A central charge on a noetherian abelian category $\altC$ is a function $Z$ on the set of objects in $\altC$ with values in $\mathbb{H}_+:=\{r\exp(\sqrt{-1}\phi)\in \CC\mid r\ge 0, \phi\in (0,\pi]\}$ such that $Z(E)=0$ implies $E=0$ and $Z(E)=Z(E')+Z(E'')$ for every short exact sequence $0\to E'\to E\to E''\to 0$. \\
\item Given a central charge $Z$, we call an object $E\in \altC$ semistable if 
\[ \arg Z(E'))\le \arg Z(E)  \;\;  \mbox{ for all subobjects } E'\subset E.                       \]
\item For $\mu\in (-\infty,+\infty]$ we denote with $\altC_\mu$ the full subcategory of all semistable objects $E$ of slope  $-\cot(\arg Z(E))=\mu$ and the zero object. It turns out that $\altC_\mu$ is an abelian subcategory of $\altC$ (cf.\ Exercise \ref{semistable_reps}).
\item A simple object in $\altC_\mu$ is called stable. We assume that  every semistable object of slope $\mu$ has a Jordan--H\"older filtration with stable subquotients of the same slope. Semisimple objects of $\altC_\mu$, i.e.\ sums of stable objects of slope $\mu$, are called polystable.  
\item Every object $E$ in $\altC$ has a unique filtration $0 \subset E_1 \subset E_2 \subset \ldots \subset E_n=E$, the Harder--Narasimhan filtration, with semistable quotients $E_i/E_{i-1}$ of strictly decreasing slopes. 
\end{enumerate}
\end{definition}
\begin{example}[The $r$-Kronecker quiver] \label{Kronecker}  \rm
Let us illustrate this idea with a simple example. Consider the abelian category of $r$-tuples $\bar{x}$ of linear maps $x_i:V_1\to V_2 $ for $1\le i\le r$ between finite dimensional vector spaces $V_1,V_2$. Choosing two complex numbers $\zeta_1,\zeta_2 \in \mathbb{H}_+$, we get a central charge by putting $Z(\bar{x})=\zeta_1\dim V_1 + \zeta_2 \dim V_2$. Assume first that $\arg(\zeta_1)=\arg(\zeta_2)$. Then, all objects are semistable of the same slope $\mu=-\cot( \arg \zeta_1)$, and we have to face the old problems. Choose for instance $\dim V_1=\dim V_2=1$. The isomorphism type of  such objects is determined by the choice of $r$ complex numbers $x_1,\ldots, x_r$ up to rescaling by $(g_1,g_2)\in \Gl(V_1)\times \Gl(V_2)=\CC^\ast\times \CC^\ast$ via $g_1x_ig_2^{-1}$. As the diagonal group $\{(g,g)\mid g\in\CC^\ast\}$ acts trivially, we have the take the GIT quotient of $\AA^r$ by $\CC^\ast$ which is just a point as $\Spec \CC[x_1,\ldots,x_r]^{\CC^\ast}=\Spec \CC$. This corresponds to the fact that all objects have the same Jordan--H\"older factors $x_i=0:V_1 \to 0$ and $x_i=0:0 \to V_2$. Thus, all objects are S-equivalent to ``$V_1\oplus V_2$''$ = V_1\xrightarrow{0} V_2$. If $\arg \zeta_2 > \arg \zeta_1$, non of our objects with $\dim V_1=\dim V_2=1$ are semistable as the central charge $\zeta_2$ of the  subobject $0:0 \to V_2$ has a bigger argument than the central charge $\zeta_1+\zeta_2$ of our given object. If, however, $\arg \zeta_2 < \arg \zeta_1$, all objects except for the semisimple $V_1\oplus V_2$ corresponding to $x_i=0$ for all $1\le i\le r$ are semistable of slope $\mu=- \Re e(\zeta_1+\zeta_2)/\Im m(\zeta_1+\zeta_2)$, and even stable. The moduli space $\Msp^{\zeta_1,\zeta_2}_{(1,1)}=\AA^r\!\setminus\!\!\{0\}/\CC^\ast= \PP^{r-1}$ parameterizing isomorphism classes of simple objects in $\altC_\mu$ of dimension vector $(\dim V_1,\dim V_2)=(1,1)$ is even a fine moduli space if we identify two families of $r$-tuples of line bundle morphisms $x_i:V_1\to V_2$ on a parameter space $S$ as soon as they become isomorphic after twisting  $V_1$ and $V_2$ with some line bundle $L$.  
\end{example}
Note that coarse moduli spaces parameterizing S-equivalence classes of objects in $\altC_\mu$ might not exist for all central charges, but one can show the existence for generic central charges and reasonable abelian categories. \\
We should also keep in mind that we paid a price for getting a refined version of S-equivalence, namely S-equivalence in  subcategories. Indeed, coarse moduli spaces of (S-equivalence classes of) semistable objects can only ``see'' semistable objects but no objects with a non-trivial Harder--Narasimhan filtration. Hence, the construction of (coarse) moduli spaces remains unsatisfying.

\subsection{Moduli stacks}

There is, however, an alternative way to overcome all the problems seen in the previous examples. Following this approach, one can construct a fine moduli ``space'' with a universal family parameterizing all objects - not only simple or stable ones - up to isomorphism. According to the conservative law of mathematical difficulties, we also have to pay a price for getting such a beautiful solution of our moduli problem. It is hidden in the word ``space''. In fact, we have to leave our comfort zone of varieties or schemes and have to dive into the universe of more general spaces known as ``Artin stacks''.\\

Recall that a scheme $X$ is uniquely characterized by its set-valued functor $h_X:S\mapsto \Mor(S,X)$ of points. We have seen many set-valued functors before while studying moduli problems. The general pattern was the following. We considered set-valued contravariant functors  $F:S\longmapsto \{ \mbox{families of objects in }\altC \}/_\sim$ and a fine moduli space would be a scheme $\Msp$ such that $F\cong h_\Msp$, while a coarse moduli space is a scheme $\Msp$ together with a map $F\to h_\Msp$ which is universal with respect to all maps $F\to h_X$ of functors. One possibility of generalizing the concept of a space is to consider set-valued functors satisfying similar properties like the functor $h_X$. Note that if one has a collection of morphisms $U_i\to X$ defined on open subsets $U_i$ covering $S$ such that the maps agree on overlaps, one can glue the maps to form a global morphism $S\to X$. This sheaf property should also be satisfied by a general set-valued functor to be a reasonable generalization of a scheme. Such set-valued functors are also often called ``spaces''. A generalized space is called algebraic if it can be written as the ``quotient'' $X/_\sim$ of a scheme $X$ by an (\'{e}tale) equivalence relation. In other words, algebraic spaces are not to far away from schemes and many results for schemes can be generalized to algebraic spaces. In our situation of forming moduli spaces, this is still not the right approach to take, but shows already into the right direction. Indeed, the problems arising in the construction of universal families are related to the presence of (non-trivial) automorphisms. Thus, we should take automorphisms and isomorphisms more seriously into account. \\
Recall that a set with isomorphisms between points was just a groupoid studied at the beginning of this section. Hence, instead of looking at set-valued functors on the category of schemes, we should consider groupoid-valued contravariant functors. These functors should satisfy some gluing property which looks a bit more complicated than in the set-theoretic context. The best idea of remembering the gluing property is by looking at an example which is - as before - the baby example for all Artin stacks.
\begin{example}  \rm \label{vector_bundle}
Consider the groupoid-valued functor $\Vect$ which maps any scheme $S$ to the groupoid of vector bundles (the objects) and isomorphisms between them (the morphisms). By pulling back vector bundles along morphisms $f:S'\to S$, we get indeed a contravariant functor.\footnote{Strictly speaking, we only get a pseudofunctor as $g^\ast\circ f^\ast$ is only equivalent to $(f\circ g)^\ast$, but we will ignore this technical problem as one can always resolve it.} Given two vector bundles $V, V'$ and an open cover $\cup_{i\in I}U_i=S$ of $S$ together with isomorphisms\footnote{We will always denote the pull-back along an inclusion $U\hookrightarrow S$ of an open subset by $|_U$.} $\phi_i:V|_{U_i} \to V'|_{U_i}$ on the open subsets $U_i$ such that they agree after restriction to the overlaps, i.e.\ $\phi_i|_{U_{ij}}=\phi_j|_{U_{ij}}$ with $U_{ij}=U_i\cap U_j$, we can 
always find a unique global isomorphism $\phi:V\to V'$ such that $\phi_i=\phi|_{U_i}$. On the other hand, if we have vector bundles $V_i$ on $U_i$ and isomorphisms 
$\phi_{ij}:V_i|_{U_{ij}} \to V_j|_{U_{ij}}$ such that the only possible composition $V_i|_{U_{ijk}} \to V_j|_{U_{ijk}} \to V_k|_{U_{ijk}} \to V_i|_{U_{ijk}}$ of their 
restrictions to the triple overlaps $U_{ijk}=U_i\cap U_j\cap U_k$ is the identity (cocycle condition), one can use the transition isomorphisms $\phi_{ij}$ to glue the $V_i$ together, i.e.\ there is a vector bundle $V$ on $S$ and a family of isomorphisms $\phi_i:V|_{U_i}\to V_i$ such that the only possible composition $V|_{U_{ij}}\to V_i|_{U_{ij}} \to V_j|_{U_{ij}} \to V|_{U_{ij}}$ of their restrictions with $\phi_{ij}$ is the identity. This was the gluing property for isomorphisms and objects, and if we replace the word ``vector bundle'' with ``object'', we get the general form of the gluing property for a groupoid-valued functor.       
\end{example}
\begin{definition}
 A stack is a groupoid-valued contravariant functor\footnote{Again, we  ignore the fact that $g^\ast\circ f^\ast$ might only be equivalent to $(f\circ g)^\ast$ for a pair $S''\xrightarrow{g} S' \xrightarrow{f} S$ of composable morphisms.} on the category of schemes satisfying the gluing property for isomorphisms and objects as seen in Example \ref{vector_bundle}
\end{definition}
In that perspective, a stack is like a (generalized) space with set-valued functors replaced with groupoid-valued functors.
\begin{exercise} Thinking of a set as a special groupoid with no nontrivial isomorphisms, show that every generalized space is a stack.
\end{exercise}
\begin{exercise} \label{moduli_algebra}
 Fix a $\CC$-algebra $A$. Show that the functor $A\rep$ associating to every scheme $S$ the groupoid of vector bundles $V$ with algebra homomorphisms $A\to \Gamma(S,\EE nd(V))$ (the objects) and isomorphisms of vector bundles compatible with the algebra homomorphisms (the morphisms) is a stack. Prove the same for bundles $\EE$ of matrix algebras and algebra homomorphisms $A\to \Gamma(S,\EE)$ as in Example \ref{projectivization}. 
\end{exercise}
\begin{exercise}
 Fix a scheme/variety/manifold $X$ over $\CC$. Show that the functor $\Coh^X$ associating to every scheme $S$ the groupoid of coherent sheaves $E$ on $S\times X$ flat over $S$ (the objects) and isomorphisms between them (the morphisms) is  a stack.  
\end{exercise}
\begin{exercise}
 Fix an algebraic group $G$. Show that the functor $\Spec \CC/G$ associating to every scheme the groupoid of principal $G$-bundles (the objects) and isomorphisms between them (the morphisms) is  a stack.   
\end{exercise}
\begin{example}  \rm
 The following example is a generalization of the previous exercise. Fix an algebraic group $G$ and a scheme $X$ with a (right) $G$-action. There is a stack $X/G$ associating to every scheme $S$ the groupoid of pairs $(P\to S, m:P\to X)$, where $P\to S$ is a principal $G$-bundle and $m:P\to X$ is a $G$-equivariant map, with morphisms being given by $G$-bundle isomorphisms $u:P\to P'$ satisfying $m'\circ u =m$. The pull-back along a morphism $f:S'\to S$ is given by $(S'\times_S P \to S', m\circ \pr_P)$. The morphism $m:P\to X$ can also be interpreted as a section of the $X$-bundle $P\times_G X \to S$. The stack $X/G$ is called the quotient stack of $X$ with respect to the $G$-action. 
\end{example}
When is comes to locally trivial families, there is some choice involved, namely the choice of the underlying (Grothendieck) topology. Intuitively, one would start with the Zariski topology, but the \'{e}tale or even the smooth topology have their advantages, too. In fact, the quotient stack $X/G$ defined above is usually taken with respect to the smooth or, equivalently, \'{e}tale topology. However, for so-called ``special'' groups $G$ like $\Gl(n)$ we could equivalently take the Zariski topology as every \'{e}tale locally trivial principal $G$-bundle is then already Zariski locally trivial. Notice that $\PGl(d)$ is not special and we should better take the \'{e}tale topology when it comes to principal $\PGl(d)$-bundles and quotient stacks $X/\PGl(d)$.

\begin{definition}
 A 1-morphism (or morphism for short) from a stack $F$ to a stack $F'$ is a natural transformation $\eta:F\to F'$, i.e.\ a family of functors $\eta_S:F(S) \to F'(S)$ compatible with pull-backs along $f:S'\to S$ up to equivalence of functors. In other words, the functors $F'(f)\circ \eta_S$ and $\eta_{S'}\circ F(f)$ from $F(S)$ to $F'(S')$ are equivalent. A 2-morphism $\alpha:\eta\to \eta'$ between 1-morphisms is an invertible  natural transformation $\alpha_S:\eta_S\to \eta'_S$ for every scheme $S$, compatible with pull-backs. In particular, given two stacks $F,F'$, we get a groupoid of morphisms $\MOR(F,F')$ with 1-morphisms being the objects and 2-morphisms being the morphisms. Hence, the category of stacks is a 2-category. 
\end{definition}
Thinking of a set as being a  groupoid having only identity morphisms, we can associate to every scheme $X$ a contravariant functor $h_X:S\mapsto \Mor(S,X)$. As we can glue morphisms, $h_X$ is indeed a stack. The following lemma is very important.
\begin{lemma}[Yoneda-Lemma] The covariant functor $h:X\mapsto h_X$ from schemes to stacks provides a full embedding of the category of schemes into the (2-)category of stacks. Moreover, there is an equivalence of groupoids $\MOR(h_X,F)\cong F(X)$ for every scheme $X$ and every stack $F$, natural in $X$ and $F$. 
\end{lemma}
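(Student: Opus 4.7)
The plan is to mimic the classical Yoneda argument but in the 2‑categorical setting of stacks, constructing an equivalence of groupoids $\MOR(h_X,F)\cong F(X)$ directly and then specializing $F=h_Y$ to obtain the full embedding.

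First I would define the evaluation functor
\[
\Phi_X^F:\MOR(h_X,F)\longrightarrow F(X),\qquad \eta\longmapsto \eta_X(\id_X),
\]
sending a 2‑morphism $\alpha:\eta\Rightarrow\eta'$ to its $X$-component at the identity, $\alpha_X(\id_X):\eta_X(\id_X)\to \eta_X'(\id_X)$. This is well defined and functorial once one checks that invertibility of $\alpha$ passes to its evaluation, which is immediate.

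Next I would build the quasi‑inverse $\Psi_X^F:F(X)\to \MOR(h_X,F)$. Given $\xi\in F(X)$, I define a 1‑morphism $\eta^\xi$ by
\[
\eta^\xi_S:h_X(S)=\Mor(S,X)\longrightarrow F(S),\qquad f\longmapsto f^\ast\xi,
\]
with the required compatibility $F(g)\circ\eta_S^\xi\simeq\eta_{S'}^\xi\circ h_X(g)$ provided by the pseudofunctor structure isomorphism $g^\ast f^\ast\xi\cong(f\circ g)^\ast\xi$. On a morphism $\beta:\xi\to\xi'$ in $F(X)$, I set $\Psi_X^F(\beta)_S(f)=f^\ast\beta$, which is a 2‑morphism by naturality of pull‑back. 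Verifying $\Phi_X^F\circ\Psi_X^F\simeq\id_{F(X)}$ is easy: $\Phi_X^F(\eta^\xi)=\id_X^\ast\xi\cong\xi$ via the identity coherence of the pseudofunctor $F$.

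The more delicate direction is $\Psi_X^F\circ\Phi_X^F\simeq\id_{\MOR(h_X,F)}$. For a 1‑morphism $\eta:h_X\to F$, setting $\xi_\eta:=\eta_X(\id_X)$, I must produce a canonical 2‑isomorphism $\Theta^\eta:\eta^{\xi_\eta}\Rightarrow\eta$. For $f\in h_X(S)=\Mor(S,X)$, note that $h_X(f)(\id_X)=\id_X\circ f=f$, so the compatibility isomorphism of $\eta$ with pull‑backs gives
\[
\eta^{\xi_\eta}_S(f)=f^\ast\bigl(\eta_X(\id_X)\bigr)=F(f)\bigl(\eta_X(\id_X)\bigr)\;\xrightarrow{\sim}\;\eta_S\bigl(h_X(f)(\id_X)\bigr)=\eta_S(f),
\]
and I take $\Theta^\eta_S(f)$ to be this canonical isomorphism. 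I expect this step to be the main bookkeeping obstacle: one has to check that the resulting $\Theta^\eta$ is a bona fide modification (i.e.\ compatible with all pull‑back structure 2‑isomorphisms and natural in $S$), and for this one uses the coherence axioms for the pseudofunctor $F$ and for the natural transformation $\eta$; nothing beyond routine diagram chasing is required, but the diagrams are sizeable.

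Finally, naturality of $\Phi_X^F$ in both $X$ and $F$ is direct from the definitions: a morphism $F\to F'$ of stacks intertwines $\Phi^F$ with $\Phi^{F'}$ by construction, and a morphism $X\to X'$ of schemes (which via $h$ produces a 1‑morphism $h_X\to h_{X'}$) intertwines the corresponding evaluation maps up to the coherence isomorphisms of the pseudofunctors. The full embedding statement then follows by taking $F=h_Y$: the equivalence $\MOR(h_X,h_Y)\simeq h_Y(X)=\Mor(X,Y)$ shows that $h$ is fully faithful, and since $h_Y(X)$ is a set (discrete groupoid), $\MOR(h_X,h_Y)$ is as well, so $h$ actually lands in the full subcategory where morphism groupoids are discrete, yielding the embedding into the 2‑category of stacks.
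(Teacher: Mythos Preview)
Your proposal is correct and follows the standard 2-categorical Yoneda argument. The paper does not actually supply a proof of this lemma; it is left as an exercise (``Try to prove the Yoneda-Lemma'') immediately following the statement, so there is nothing to compare against beyond noting that your construction of $\Phi_X^F$ and $\Psi_X^F$ and the coherence checks you outline are exactly what that exercise expects.
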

\begin{exercise}
 Try to prove the Yoneda-Lemma.
\end{exercise}
The lemma is basically saying that the 2-category of stacks is an enlargement of the category of schemes, and we will drop the functor $h$ from notation. Though the definition of a stack  looks very abstract, the reader should not think of a stack $F$ as a complicated functor, but rather as some object of a bigger 2-category containing the category of schemes. The groupoid-valued functor associated to $F$ can be (re)constructed by taking $X\mapsto \MOR(X,F)$. In other words, assume that you have a 2-category $\altC$ with 2-morphisms being invertible, containing $\Sch_\kk$ as a full subcategory, and such that 1-morphisms starting at schemes and 2-morphisms between such 1-morphisms can be glued in a natural way. To every object $F\in \Obj(\altC)$  we can associate the groupoid-valued functor $\MOR(-,F)|_{\Sch_\kk}$ on the category of schemes. It satisfies the gluing axioms given above, and, hence, defines a stack. Thus, we get a covariant functor from $\altC$ to the category of stacks showing that stacks form some sort of ``natural'' enlargement. 
\begin{exercise} \label{stack_morphisms} \hfill
 \begin{enumerate}
    \item Let $X$ be a scheme with a right action of an algebraic group $G$. Consider the trivial principal $G$-bundle $\pr_X: X\times G\to X$ on $X$  and the $G$-equivariant map $m:X\times G\to X$ given by the group action. According to our definition of a quotient stack, the pair $(X\times G\to X,m)$ defines an element $\rho$ in $X/G(X)$. Check the Yoneda lemma by constructing a morphisms $\rho:X\to X/G$ which is called the ``standard atlas'' of $X/G$. 
  \item Given two schemes $X, Y$ and two algebraic  groups $G, H$ acting on $X$ and $Y$ respectively. Assume that $\phi:G\to H$ is a group homomorphism and that $f:X\to Y$ is a morphism of schemes satisfying $f(xg)=f(x)\phi(g)$ for all $g\in G$ and $x\in X$. Construct a natural 1-morphism $\fst:X/G\to Y/H$ of stacks such that 
  \[ \xymatrix { X\ar[d]^{\rho_X} \ar[r]^f & Y \ar[d]^{\rho_Y} \\ X/G \ar[r]^{\fst} & Y/H} \]
  commutes.\textbf{Warning:} Not every morphism $X/G\to Y/H$ is of this form. 
  \item Consider the special case $H=\{1\}$, and show that $\fst\mapsto f:=\fst\circ \rho_X$ defines an equivalence from $\MOR(X/G,Y)$ to the set $\Mor(X,Y)^G$ of $G$-invariant morphisms, thought as a groupoid.
  \item More general, given a scheme $Y$ and a stack $F$, show that $\MOR(F,Y)$ is essentially a set, i.e.\ the only 2-morphisms are the identity morphisms.
 \end{enumerate}
\end{exercise}
Let us come back to moduli spaces. The moduli problem of classifying $G$-homogeneous spaces $P$ together with $G$-equivariant maps $P\to X$ for some fixed scheme $X$ with an action of an algebraic group $G$, has a natural generalized ``moduli space'', namely the quotient stack $X/G$. This is not a deep insight, but just the definition of the associated moduli functor. Note that the isomorphism classes of the $\kk$-valued points of $X/G$, i.e.\ $X/G(\Spec\kk)/_\sim$, is the set of $G$-orbits in $X$ justifying the notation.\\
Quotient stacks are also very helpful when it comes to other moduli problems as the following example shows, and their usefulness cannot be overestimated.
\begin{example} \label{stack_example_2} \rm
Consider the stack of finite dimensional representations of a  $\kk$-algebra $A$. Assume that $A$ is finitely presented, i.e.\ $A$ is generated by a set\footnote{The notation in this example has been chosen with an eye towards the next section.}  $Q_1$ of finitely many elements $\alpha_1, \ldots, \alpha_n$ satisfying  a finite set of relations $R=\{r_1,\ldots,r_m\}$. Fix a  ``dimension'' $d\in \NN$ and put $X_d =\Hom_\kk(\kk^d,\kk^d)^n=\prod_{\alpha\in Q_1}\Hom_\kk(\kk^d,\kk^d)$ and $X^R_d=\{(M_\alpha)_{\alpha\in Q_1}\mid r_j(M_{\alpha_1},\ldots,M_{\alpha_n})=0 \mbox{ for all }1\le j\le m\}$. We claim that the moduli stack $A\rep_d$ of $d$-dimensional representations of $A$ is equivalent to the quotient stack $X^R_d/\Gl(d)$ with $\Gl(d)$ acting by conjugation on $\Hom_\kk(\kk^d,\kk^d)$. Indeed, a family of  $d$-dimensional representations on $S\in \Sch_\kk$ is uniquely determined by a vector bundle $V$ of rank $d$ on $S$ and vector bundle endomorphisms $\hat{\alpha}$ associated to  $\alpha\in Q_1$ satisfying the relations $r_1,\ldots,r_m$. Consider the frame bundle  $\Fr(V)=\{ (s,\tau) \mid s\in S, \tau\in \Hom_\kk(\kk^d,V_s)\mbox{ is invertible}\}$ of $V$ parameterizing all possible choices of a basis in all possible fibers of $V$. It comes with a projection to $S$ and a right action of $\Gl(d)$ by composition with $\tau\in \Hom_\kk(\kk^d,V_s)$. 
\begin{exercise}
 Show that $\Fr(V)$ is a principal $\Gl(d)$-bundle.
\end{exercise}
There is also a $\Gl(d)$-equivariant map $m(V,(\hat{\alpha})_{\alpha\in Q_1})$ from $\Fr(V)$ into $X^R_d$  mapping a pair $(s,\tau)$ to $(M_\alpha:=\tau^{-1}\circ \hat{\alpha}|_{V_s} \circ \tau)_{\alpha\in Q_1}$. 
\begin{exercise}
 Convince yourself that the map $(V,(\hat{\alpha})_{\alpha\in Q_1})\longmapsto \big(\Fr(V),m(V,(\hat{\alpha})_{\alpha\in Q_1})\big)$ extends to a functor from the groupoid of families of $d$-dimensional $A$-representations into the groupoid $X^R_d/\Gl(d)(S)$. Show furthermore that this functor is compatible with pull-backs, and, thus, defines a morphism $A\rep_d\to X^R_d/\Gl(d)$ of stacks.
\end{exercise}
Conversely, given a principal $\Gl(d)$-bundle $P$ on $S$ and a $\Gl(d)$-equivariant map $m:P\to X^R_d$, we can consider the trivial vector bundle $P\times \kk^d$ on $P$ which comes with a natural $\Gl(d)$-action compatible with the projection to  $P$. Moreover, picking the component of $m$ associated to $\alpha\in Q_1$, we get an endomorphism  $\hat{\alpha}$ of this trivial bundle. $\Gl(d)$-equivariance of $m$ ensures that $\hat{\alpha}$ commutes with the $\Gl(d)$-action on $P\times \kk^d$. By taking the  $\Gl(d)$-quotient, we obtain a vector bundle $V=P\times_{\Gl(d)}\kk^d$ of rank $d$ on $S$ along with vector bundle endomorphisms $\hat{\alpha}$ satisfying the relations $r_1,\ldots,r_m$.
\begin{exercise}
 Show that this construction extends to a functor between groupoids, compatible with pull-backs. Hence, we obtain a morphism from the quotient stack $X^R_d/\Gl(d)$ to $A\rep_d$. Prove that this morphism is an inverse (up to 2-isomorphism) of the morphism constructed above.  
\end{exercise}
Thus, the claim is proven and the stack $A\rep$ of $A$-representations is isomorphic to $\sqcup_{d\in\NN} X^R_d/\Gl(d)$. 
\end{example}
\begin{exercise} \label{stack_projective_reps} Use a similar idea of frame bundles parameterizing tuples $(s\in S,\Mat_\CC(r,r)\xrightarrow{\sim}\EE_s)$ for a locally trivial family $\EE$ on $S$ of $\CC$-algebras isomorphic to $\Mat_\CC(r,r)$ to show that the stack of projective $A$-representations is given by $\sqcup_{d\in\NN} X^R_d/\PGl(d)$. As in Exercise \ref{stack_morphisms}(2), we obtain a morphism $X^R_d/\Gl(d)\longrightarrow X^R_d/\PGl(d)$ by means of the group homomorphism $\Gl(d)\to \PGl(d)$. Show that this morphism is mapping the $A$-representation on $V$ to the projective $A$-representation on $\PP(V)$, in other words, forget $V$ and keep $\EE nd(V)$ together with the algebra homomorphism  $A\to \Gamma(S,\EE nd(V))$. 
\end{exercise}
\begin{example} \rm
The ``geometry'' of the moduli stack $\Coh^{X}$ of coherent sheaves on a smooth projective variety $X$ is more involved. First of all, it decomposes into components $\Coh^{X}_c$ indexed by numerical data like Chern classes similar to the dimension of a representation. Unfortunately, a component can not be written as a quotient stack. However, every component $\Coh^{X}_{c}$ is the nested union of ``open'' substacks $\Coh^X_{c,i}, i\in \NN,$ which can be written as a quotient stack $Y_{c,i}/\Gl(n_{c,i})$. Note that $n_{c,i}$ grows with $i$. More details can be found is section 9 of \cite{JoyceI}.  
\end{example}

The following definition of a fiber product is very important.
\begin{definition}[fiber product]
 Given two morphisms $\fst:\XXX\to \ZZZ$ and $\gst:\YYY\to \ZZZ$ of groupoid-valued functors, we define the fiber product $\XXX\times_\ZZZ \YYY$ as the groupoid-valued functor such that 
\[ \Obj\XXX\times_\ZZZ \YYY(S)=\{ (x,y,w)\mid x\in \Obj\XXX(S), y\in  \Obj\YYY(S), w\in \Mor_{\ZZZ(S)}(\fst_S(x),\gst_S(y)) \}, \] and
\begin{eqnarray*} \lefteqn{\Mor_{\XXX\times_\ZZZ\YYY(S)}\Big((x,y,w),(x',y',w')\Big)} \\&=& \{ (u,v)\in \Mor_{\XXX(S)}(x,x')\times \Mor_{\YYY(S)}(y,y') \mid \xymatrix @C=0.6cm @R=0.6cm{ \fst_S(x) \ar[r]^w \ar[d]_{\fst_S(u)} & \gst_S(y) \ar[d]^{\gst_S(v)} \\ \fst_S(x') \ar[r]^{w'} & \gst_S(y') } \mbox{commutes } \}
  \end{eqnarray*}
for every $S\in \Sch_\kk$.  
\end{definition}
\begin{exercise} Show the main properties of the fiber product.
\begin{enumerate} 
 \item Prove that $\XXX\times_\ZZZ \YYY$ is a stack if $\XXX,\YYY,\ZZZ$ were stacks.
 \item Construct two morphisms $\pr_\XXX: \XXX \times_\ZZZ \YYY \longrightarrow \XXX$ and $\pr_\YYY: \XXX \times_\ZZZ \YYY \longrightarrow \YYY$ of groupoid-valued functors and a 2-morphism $\omega:\fst\circ\pr_\XXX\to \gst\circ\pr_\YYY$. Show that the following universal property holds. Given a groupoid-valued functor $\mathfrak{T}$, two morphisms $\mathfrak{p}:\mathfrak{T}\to \XXX$, $\mathfrak{q}:\mathfrak{T}\to \YYY$ and a 2-morphism $\eta:\fst\circ\mathfrak{p} \to \gst\circ\mathfrak{q}$, there is a unique morphism $\mathfrak{r}:\mathfrak{T}\to \XXX\times_\ZZZ \YYY$ such that $\pr_\XXX\circ\mathfrak{r}=\mathfrak{p}$ and $\pr_\YYY\circ\mathfrak{r}=\mathfrak{q}$.
 \[ \xymatrix { \mathfrak{T} \ar@{.>}[dr]^{\mathfrak{r}} \ar@/^1pc/[drr]^{\mathfrak{p}} \ar@/_1pc/[ddr]_{\mathfrak{q}} & & \\ & \XXX\times_\ZZZ \YYY \ar[r]^{\pr_\XXX} \ar[d]_{\pr_\YYY} & \XXX \ar[d]^\fst \ar@{=>}[dl]^\omega \\ & \YYY \ar[r]_\gst & \ZZZ } \]
\end{enumerate} 
\end{exercise}
When it comes to quotient stacks, the following examples are very useful.
\begin{exercise} \hfill
\begin{enumerate}
 \item Assume $\XXX=X, \YYY=Y\in \Sch_\kk$ and $\ZZZ=Z/G$ for some algebraic group $G$ acting on a scheme $Z$. The morphisms $\fst:X\to Z/G$ and $\gst:Y \to Z/G$ are given by principal $G$-bundles $P\to X$ and $Q\to Y$ together with $G$-equivariant morphisms $f:P\to Z$ and $g:Q\to Z$ respectively. Show that the fiber product $X\times_{Z/G} Y$ is given by the scheme $Iso_{f,g}(P,Q)\subseteq Iso(P,Q)$ over $X\times Y$ given by $\{(x,y,w)\mid x\in X,y\in Y, w:P_x\xrightarrow{\sim}Q_y \;G\mbox{-equivariant such that }f|_{P_x}=g|_{Q_x}\circ w\}$.
\item Assume furthermore $X=Z$ and $P=X\times G\xrightarrow{\pr_X}X$ with $f:X\times G\to X$ being the group action. Hence, $\fst$ is the standard atlas $\rho:X\to X/G$. Show that $Iso_{f,g}(P,Q)$ is isomorphic to  $Q$, and 
\[ \xymatrix { Q \ar[r]^g \ar[d] & X \ar[d]^\rho \\ Y \ar[r]^{\gst} & X/G }\]
is the fiber product diagram, i.e.\ a cartesian square. Hence, $\rho:X\to X/G$ is the universal principal $G$-bundle. 
\end{enumerate} 
\end{exercise}
\begin{example} \label{example_fiber_product} \rm
Let $\phi:G\to K$ and $\psi:G\to K$ be homomorphisms between algebraic groups $G,H,K$ acting on $X,Y$ and $Z$ respectively. Moreover, let $f:X\to Z$ and $g:Y\to Z$ be two morphisms such that $f(xg)=f(x)\phi(g)$ and $g(yh)=g(y)\psi(h)$ for all $x\in X,y\in Y,g\in G,h\in H$. As we have seen in Exercise \ref{stack_morphisms}, this induces morphisms $\fst:X/G\to Z/K$ and $\gst:Y/H\to Z/K$. Then, $X/G\times_{Z/K} Y/H$ is the quotient stack $(X\times Y)\times_{(Z\times Z)} (Z\times K)/(G\times H)$ using the group actions $(x,y)(g,h)=(xg,yh)$, $(z,k)(z\phi(g),\phi(g)^{-1}k\psi(h))$, $(z_1,z_2)(g,h)=(z_1\phi(g),z_2\psi(h))$ of $G\times H$ on $X\times Y$, $Z\times K$, $Z\times Z$ and the $G\times H$-equivariant morphisms $X\times Y\ni (x,y)\mapsto (f(x),g(y))\in Z\times Z$, $Z\times K\ni(z,k) \mapsto (z,zk) \in Z\times Z$.
\end{example}
\begin{exercise} \label{fiber_projectivization}
Use the previous example to show that every fiber of the morphism $X^R_d/\Gl(d)\longrightarrow X^R_d/\PGl(d)$ constructed in Exercise \ref{stack_projective_reps} is isomorphic to $\Spec\CC/\GG_m$. Interpret this result in terms of (projective) $A$-representations. (cf.\ Example \ref{projectivization})
\end{exercise}

The following definition is slightly stronger than the one used in the literature as we do not have algebraic spaces at our disposal. However, it will be sufficient for our purposes. 
\begin{definition} A morphism $\fst:\XXX\to \ZZZ$ is called representable if for every morphisms $\gst:Y\to \ZZZ$ from a scheme $Y$ into $\ZZZ$, the fiber product $\XXX\times_\ZZZ Y$ is (represented by) a scheme. In such a situation, we call $\fst$ smooth, surjective etc.\ if $\XXX\times_\ZZZ Y \xrightarrow{\pr_Y} Y$ is smooth, surjective etc.
\end{definition}

\begin{exercise} \label{representable} Here are some examples of representable morphisms.
 \begin{enumerate}
  \item Show that every morphism between schemes is representable.
  \item Prove that the standard atlas $\rho:X\to X/G$ is representable, smooth and surjective. Hint: Every algebraic group is a smooth scheme. 
  \item Use Example \ref{example_fiber_product} to show that $\fst:X/G\to Z/K$ is representable if $\phi:G\to K$ is injective. Give a counterexample for the converse statement.
  \item Prove that the diagonal $\Delta_\ZZZ:\ZZZ \to \ZZZ\times_{\Spec\kk}\ZZZ$ is representable if and only if every morphism $\fst:X\to \ZZZ$ from a scheme $X$ is representable. Hint: $X\times_\ZZZ Y= (X\times Y)_{(\ZZZ\times \ZZZ)} \ZZZ$.
 \end{enumerate}
\end{exercise}
\begin{definition}
 A stack $\XXX$ is called algebraic or an Artin stack if 
 \begin{enumerate}
  \item[(i)] $\Delta_\XXX:\XXX\to \XXX\times \XXX$ is representable (cf.\ Exercise \ref{representable}(4)) and
  \item[(ii)] there is a smooth, surjective morphism $\rho:X\to \XXX$ from a scheme $X$.
 \end{enumerate}
In such a situation, we call $\rho:X\to \XXX$ an atlas of $\XXX$. 
\end{definition}
In a suitable sense, the algebraic stack $\XXX$ is a quotient of its atlas $X$ similar to the concept of an algebraic space. However, the quotient is taken in the category of groupoids and not in the category of sets as before. As we have seen in Exercise \ref{representable}, every quotient stack is an Artin stack with standard atlas $\rho:X\to X/G$. By taking $X^R=\sqcup_{d\in \NN}X^R_d\to A\rep$, the  moduli stack of finite dimensional representations of a  finitely represented $\CC$-algebra $A$ is also algebraic. Finally, using $Y=\sqcup_{c,i} Y_{c,i}\to \Coh^X$, we see that the moduli stack of coherent sheaves on a smooth projective variety $X$ is also an Artin stack.

\section{Quiver representations and their moduli}

\subsection{Quivers and $\kk$-linear categories}
Recall that a groupoid is a category generalizing groups and  sets. Similarly, there is a  categorical concept interpolating between $\kk$-algebras and sets. 
These are the so-called $\kk$-linear categories. A category $\mathcal{A}$ is called $\kk$-linear if the morphism sets $\Mor_{\mathcal{A}}(x,y)$ have the structure of a $\kk$-vector space such that the composition of morphisms is $\kk$-bilinear. 
As usual, we write $\Hom_{\mathcal{A}}(x,y)$ for the $\kk$-vector space of morphisms from $x$ to $y$ and $\End_{\mathcal{A}}(x)=\Hom_{\mathcal{A}}(x,x)$ for the $\kk$-algebra of endomorphisms of $x\in \Obj(\mathcal{A})$.  A $\kk$-linear category with one object is just a $\kk$-algebra. On the other hand, $\kk$-linear categories with as less morphisms as possible are uniquely classified by their set of objects since any morphism must be zero or a multiple of the identity of some object. 
Another standard example of a $\kk$-linear category is given by the category $\Vect_\kk$ of finite dimensional $\kk$-vector spaces. A finite dimensional representation of a $\kk$-linear category $\mathcal{A}$ is simply given by a functor $V:\mathcal{A}\to \Vect_\kk$. Indeed, if the category $\altA$ has only one object $\star$, $V(\star)$ is just a finite dimensional representation of the endomorphism algebra $\End_\altA(\star)$. As we have seen in the previous section, generators of algebras are very useful when it comes to the construction of moduli stacks. The  analogue in the context of $\kk$-linear categories is called a quiver. A quiver consists of a set of objects $Q_0$ and a set of ``arrows'' $Q_1$ along with  maps $s,t:Q_1 \to Q_0$ indicating the \underline{s}ource and the \underline{t}arget of an arrow. We do not require a composition law nor identity morphisms. 
Given a $\kk$-linear category $\altA$, a quiver in $\altA$ satisfies $Q_0\subseteq\Obj(\altA), Q_1\subseteq \Mor(\altA)$ and $s,t$ are given by restriction of the corresponding maps on $\Mor(\altA)$ to $Q_1$. We say that $\altA$ is generated by a quiver $Q$, if the smallest $\kk$-linear category containing $Q$ is $\altA$ which implies $Q_0=\Obj(\altA)$. There is a biggest $\kk$-linear category generated by a given quiver $Q$, the so-called path category $\kk Q$ of $Q$. A morphism of $\kk Q$ from $x\in Q_0$ to $y\in Q_0$ is a $\kk$-linear combination of chains $x=x_1\to x_2\to \ldots \to x_{n-1}\to x_n=y$ of composable arrows in $Q_1$. We also need to add an identity morphism and its $\kk$-linear multiples. 
\begin{exercise} Construct a category of quivers such that $Q\mapsto \kk Q$ is a functor from this category to the category of (small) $\kk$-linear categories. Construct a right adjoint of this functor. 
\end{exercise}
\begin{exercise} Show that there is a bijection between representations of $\kk Q$ and representations $V$ of  $Q$ associating to every $i\in Q_0$ a vector space $V_i$ and to every arrow $\alpha:i\to j$ in $Q_1$ a $\kk$-linear map $V(\alpha):V_i\to V_j$.
\end{exercise}
Given a $\kk$-linear category $\altA$ and a generating quiver $Q$ in $\altA$, we get a full functor $\kk Q \twoheadrightarrow \altA$ which is a bijection on the set of objects. The kernel is a $\kk$-linear subcategory $\mathcal{I}$ in $\kk Q$ which has the property $a\circ b\in \Mor(\mathcal{I})$ if $a\in \Mor(\mathcal{I})$ or $b\in \Mor(\mathcal{I})$ categorifying the concept of an ideal. A generating quiver for $\mathcal{I}$ is uniquely determined by its set of arrows
$R\subseteq \Mor(\kk Q)$ which are called relations. Conversely, every quiver $Q$ with relations give rise to a $\kk$-linear category $\kk Q/(R)$ uniquely defined up to isomorphism. Conversely, every $\kk$-linear category $\altA$ can be written like this (up to isomorphism) in many ways.\\

Throughout this paper we will only consider finite quivers, i.e.\ $|Q_0|<\infty$ and $|Q_1|<\infty$ and similarly for the relations. Hence, the $\kk$-linear categories $\altA$ which can be described by a finite quiver with finitely many relations are exactly the finitely presented $\kk$-linear categories. 
\begin{exercise}
Show that the category of $\kk$-linear categories $\altA$ with finite set of objects is equivalent to the category of $\kk$-algebras together with a distinguished finite set $\{e_i\}_{i\in I}$ of mutually orthogonal idempotent elements $e_i$ such that $1=\sum_{i\in I}e_i$. Hint: Put $A:=\oplus_{i,j\in \Obj(\altA)} \Hom_\altA(i,j)$ and $e_i=\id_i$ for all $i\in \altA$. Moreover, prove that the category of representations of such a $\kk$-linear category is isomorphic to the category of representations of the associated algebra.  
\end{exercise}
Using the last exercise, we can also talk about the path $\kk$-algebra of a quiver with finite set $Q_0$ and its representations. Note that the path $\kk$-algebra has a distinguished family $(e_i)_{i\in Q_0}$ of mutually orthogonal idempotent elements summing up to $1$. 

\subsection{Quiver moduli spaces and stacks}

Generalizing the moduli functor $A\rep$ of finite dimensional representations of a given $\kk$-algebra $A$ (see Example \ref{moduli_algebra}), we define the moduli functor $\altA\rep$ of finite dimensional representations of a $\kk$-linear category $\altA$ as follows. To every scheme $S$ over $\kk$ we associate the isomorphism groupoid $\altA\rep(S)$ of the category of functors $\altA\to \Vect_S$, where $\Vect_S$ is the category of vector bundles on $S$. 
\begin{exercise}
 Show that $\altA\rep$ is a stack, i.e.\ satisfies the gluing axiom for groupoid-valued functors.
\end{exercise}
If $\altA$ is represented by a quiver $Q$ with  relations $R\subseteq \Mor(\kk Q)$, the category $\altA\rep(S)$ is equivalent to the category of families $(V_i)_{i\in Q_0}$ of vector bundles on $S$ together with vector bundle morphisms $\hat{\alpha}=V(\alpha):V_i\to V_j$ such that $V(r)=r\big((\hat{\alpha})_{\alpha\in Q_1}\big)=0$ for all $r\in R$, where we extended $V$ from $Q_1$ to $\Mor(\kk Q_1)\supseteq Q_1$. 

Let us assume that $Q_0,Q_1$ and $R$ are finite sets. Using Example \ref{stack_example_2}, it should not come as a surprise that $\altA\rep$ is isomorphic to  a disjoint union $\Mst^R:=\sqcup_{d\in \NN^{Q_0}} \Mst^R_d$ of quotient stacks $\Mst^R_d:=X^R_d/G_d$ with 
\[X^R_d:=\{ (M_{\alpha})_{\alpha\in Q_1}\in X_d \mid r\big((M_\alpha)_{\alpha\in Q_1}\big)=0 \,\forall \, r\in R\} \subseteq X_d:= \prod_{Q_1\ni\alpha:i\to j} \Hom_\kk(\kk^{d_i},\kk^{d_j})\]  and $G_d=\prod_{i\in Q_0}\Gl(d_i)$ acting on $X_d$ by simultaneous conjugation. The ``dimension vector'' $d\in \NN^{Q_0}$ is fixing $\dim \altV=(\rk \altV_i)_{i\in Q_0}$. \\
Similarly, given a sequence of dimension vectors $d^{(1)},\ldots,d^{(r)}$ we denote with $X_{d^{(1)},\ldots,d^{(r)}}\subseteq X_{d^{(1)}+\ldots+d^{(r)}}$ the affine subvariety parameterizing linear maps preserving the standard flag $0 \subseteq \kk^{d^{(1)}_i} \subseteq \kk^{d^{(1)}_i}\oplus\kk^{d^{(2)}_i} \subseteq \ldots \subseteq \kk^{d^{(1)}_i}\oplus\ldots\oplus \kk^{d^{(r)}_i}$ for every $i\in Q_0$. The subgroup $G_{d^{(1)},\ldots,d^{(r)}}\subseteq G_{d^{(1)}+\ldots+d^{(r)}}$ is defined in the same way. Finally, we put $X^R_{d^{(1)},\ldots,d^{(r)}}:=X_{d^{(1)},\ldots,d^{(r)}}\cap X^R_{d^{(1)}+\ldots+d^{(r)}}$.
\begin{exercise}
 Show that the stack of all successive extensions 
 \begin{eqnarray*}
 &0\to V^{(1)}\to \hat{V}^{(2)} \to V^{(2)} \to 0, &\\
&0\to \hat{V}^{(2)}\to \hat{V}^{(3)} \to V^{(3)} \to 0, &\\  
& \vdots & \\
&0\to \hat{V}^{(r-1)}\to \hat{V}^{(r)} \to V^{(r)} \to 0 &
 \end{eqnarray*}
of quiver representations satisfying the relations $R$ and with $\dim V^{(j)}=d^{(j)}$ for all $1\le j\le r$ is given by the quotient stack $\Mst^R_{d^{(1)},\ldots,d^{(r)}}=X^R_{d^{(1)},\ldots,d^{(r)}}/G_{d^{(1)},\ldots,d^{(r)}}$. Hint: The standard flag introduced above defines a standard successive extension of $Q_0$-graded vector spaces of prescribed dimension vectors. Given a family of successive extensions, consider the principal $G_{d^{(1)},\ldots,d^{(r)}}$-bundle parameterizing all isomorphism from the standard extension to the fibers of the family, and proceed as usual.  
\end{exercise}
We are mainly interested in the following type of relations. A potential $W$ is an element of the vector space $\kk Q/[\kk Q,\kk Q]$, where $[\kk Q,\kk Q]$ denotes the $\kk$-linear span (and not the spanned ideal) of all commutators. Note that $\kk Q/[\kk Q,\kk Q]$ is the $0$-th Hochschild homology of the $\kk$-linear category $\kk Q$. Convince yourself that $W$ is essentially just a $\kk$-linear combination of equivalence classes of cycles in $Q$ with two cycles being equivalent if they can be transformed into each other by a cyclic permutation. 
\begin{example} \rm
 The three elements $[x,y]z=xyz-yxz$, $[z,x]y$ and $[y,z]x$ in $\kk Q^{(3)}$ of the 3-loop quiver $Q^{(3)}$
 \[ \xymatrix { \bullet \ar@(u,r)^y \ar@(dr,dl)^z \ar@(l,u)^x }\]
 define the same potential $W$.
\end{example}
For a fixed potential $W=\sum_{l=1}^L a_l \cdot[C_l]$ we define relations $\partial W/\partial \alpha\in \Hom_{\kk Q}(j,i)$ for every $\alpha:i\to j$ in $Q_1$ as follows.
\[ \frac{\partial W}{\partial \alpha}:=\sum_{l=1}^L a_l \cdot\sum_{C_l=u\alpha v} vu \]
with $a_l\in \kk$, where the second sum is over all occurrences of $\alpha$ in a fixed representative of an equivalence class $[C_l]$ of cycles in $Q$.
\begin{exercise}
 Show that the definition of $\partial W/\partial \alpha$ is independent of the choice of the representative $C_l\in [C_l]$ for all $1\le l\le L$. 
\end{exercise}

\begin{example} \rm
 Using the potential $W=[x,y]z=xyz-yxz$ from the previous example, we compute
 \begin{eqnarray*}
  \frac{\partial W}{\partial x} & = & yz-zy\; =\;[y,z], \\
  \frac{\partial W}{\partial y} & = & zx-xz\; =\;[z,x], \\
  \frac{\partial W}{\partial z} & = & xy-yx\; =\;[x,y].
 \end{eqnarray*}
Convince yourself that $W=[z,x]y$ and $W=[y,z]x$ provide the same relations. 
\end{example}

Given a dimension vector $d\in \NN^{Q_0}$ and a potential $W=\sum_{l=1}^L a_l \cdot[C_l]$ with $C_l=\alpha_l^{(1)}\circ \ldots \circ\alpha^{(n_l)}_l$, we define the following function
\[ \Tr(W)_d:X_d\ni (M_\alpha)_{\alpha\in Q_1} \longmapsto \sum_{l=1}^L a_l\cdot \Tr\big(M_{\alpha_l^{(1)}}\cdot \ldots \cdot M_{\alpha^{(n_l)}_l}\big) \in \AAA \]
which is independent of the choice of the representative $C_l\in [C_l]$ as the trace is invariant under cyclic permutation. By the same argument, $\Tr(W)_d$ is $G_d$-invariant, and induces a function $\WWW_d:\Mst_d\to \AAA$ on the quotient stack. 
\begin{exercise}
 Let us take the relations $R=\{\partial W/\partial \alpha \mid \alpha\in Q_1\}$. Show that $X^R_d=\Crit(\Tr(W)_d)$ is the critical locus of $\Tr(W)_d$, and similarly $\Mst^R_d=\Crit(\WWW_d)$. 
\end{exercise}
Throughout the paper we will use the superscript $W$ instead of the superscript $R$ for $R=\{\partial W/\partial \alpha \mid \alpha\in Q_1\}$, and no superscript if $W=0$. We will also use the notation $\Jac(Q,W)$ for the so-called Jacobi algebra $\kk Q/(R)$.\\

The moduli stack $\Mst^R_d$ has a coarse moduli space $\Msp^{W,ssimp}_d$ parameterizing semisimple (direct sums of simple) representations of dimension vector $d$. It is an affine scheme given by $\Spec \kk[X^R_d]^{G_d}$ with $\kk[X^R_d]^{G_d}$ denoting the $G_d$-invariant regular functions on the affine scheme $X^R_d$. 
\begin{example} \rm
 For the 3-loop quiver $Q^{(3)}$ with potential $W=[x,y]z$, the scheme $X^W_d$ parametrizes triples of commuting $d\times d$-matrices $M_x,M_y,M_z$. Hence, a simple representation of the Jacobi algebra $\Jac(Q^{(3)},W)=\kk[x,y,z]$ is one-dimensional and determined by $(M_x,M_y,M_z)\in \AA^3$. Therefore, $\Msp^W_d=\Sym^d(\AA^3)=(\AA^3)^d/S_d$.
\end{example}
Let us finally introduce a stability condition by choosing a tuple $\zeta\in \mathbb{H}_+^{Q_0}$ of complex numbers in the (extended) upper half plane $\mathbb{H}_+$ giving rise to the ``central charge'' $Z(V):=\zeta\cdot \dim V=\sum_{i\in Q_0}\zeta_i\dim V_i \in \mathbb{H}_+$ for every representation $V$ of $Q$. 
\begin{definition} A representation $V\not=0$ of a quiver $Q$ (with relations) is called $\zeta$-semistable if
 \[ \arg Z( V') \le \arg Z( V)  \]
 for all proper subrepresentations $V'\subset V$. If the inequality is strict, $V$ is called $\zeta$-stable. The real number $\mu(V):=-\cot(\arg Z(V))$  is called the slope of $V$. Hence, $V$ is semistable if and only if $\mu(V')\le \mu(V)$ for all proper subrepresentations $V'\subset V$. 
\end{definition}
\begin{exercise} \label{semistable_reps} Let us show that semistable representations of the same slope $\mu$ form a nice full subcategory.
 \begin{enumerate}
  \item Consider a morphism $f:V^{(1)}\to V^{(2)}$ of semistable representations of slopes $\mu(V^{(1)})>\mu(V^{(2)})$. Show that $f=0$. Hint: Relate the slope of $V/\ker(f)=\im(f)$ to $\mu(V^{(1)})$ and to $\mu(V^{(2)})$ by drawing the central charges of all objects involved. 
  \item Using the notation of the first part, let us assume $\mu(V^{(1)})=\mu(V^{(2)})$ for the semistable representations $V^{(1)},V^{(2)}$. Show that $\ker(f)$ and $\coker(f)$ are also semistable of the same slope $\mu(V^{(1)})$. In particular, the semistable representations of a fixed slope $\mu$ form a full abelian subcategory.
  \item Show that the stable objects of slope $\mu$ are the simple objects in the full abelian subcategory of semistable representations of slope $\mu$.
  \item Prove that the extension of two semistable representations of slope $\mu$ is again semistable of the same slope.
 \end{enumerate}
\end{exercise}
Every representation $V$ of a quiver (with relations) has a unique Harder--Narasimhan filtration, i.e.\ a finite filtration $0\subset V^{(1)} \subset \ldots\subset V^{(r)}=V$ such that the subquotients $V^{(i)}/V^{(i-1)}$ are semistable of slope $\mu^{(i)}$ satisfying $\mu^{(1)}>\ldots> \mu^{(r)}$.
\begin{exercise} Let us prove the last statement in three steps.
 \begin{enumerate}
  \item Show that $V$ has a maximal nonzero subrepresentation of maximal slope. Hint: Show that the set of slopes of subrepresentations of $V$ has a maximal element. Use Exercise \ref{semistable_reps}(4) to construct a maximal subrepresentation of maximal slope.
  \item Use  Exercise \ref{semistable_reps}(1) to construct  a Harder--Narasimhan filtration. Hint: Let $V^{(1)}$ be the subrepresentation constructed in the first step, and let $V^{(2)}$ be the preimage of a maximal subrepresentation in $V/V^{(1)}$ of maximal slope. Proceed in this way, and use the previous exercise to estimate the slopes. 
  \item Prove the uniqueness of this filtration by applying  Exercise \ref{semistable_reps}(1) once more. 
 \end{enumerate}

\end{exercise}

We denote by $X^{R,\zeta-ss}_d$ the subscheme of linear maps $(M_\alpha)_{\alpha\in Q_1}$ such that the induced quiver representation on $(\kk^{d_i})_{i\in Q_0}$ is $\zeta$-semistable. It is open and stable under the $G_d$-action. Hence, we can form the quotient stack $\Mst^{R,\zeta-ss}_d=X^{R,\zeta-ss}_d/G_d$ of $\zeta$-semistable representations of dimension vector $d$. The open subscheme $X^{R,\zeta-st}_d\subseteq X^{R,\zeta-ss}_d$ and the open substack $\Mst^{R,\zeta-st}_d\subset \Mst^{R,\zeta-ss}_d$ of $\zeta$-stable representations are defined accordingly.  \\
If $\zeta_i=-\theta_i+\sqrt{-1}$ with $\theta_i\in \ZZ$ for all $i\in Q_0$, one can linearize the $G_d$-action on the trivial line bundle over $X_d$ using the character \[G_d\ni (g_i)_{i\in Q_0} \longmapsto \prod_{i\in Q_0}\det(g_i)^{\theta\cdot d-|d|\theta_i} \in \GG_m\]
with $\theta\cdot d=\sum_{i\in Q_0}\theta_id_i$ and $|d|:=\sum_{i\in Q_0}d_i$. A.\ King showed in \cite{King} that $X^{R,\zeta-ss}_d$ is the subscheme of semistable points in $X^R_d$ with respect to this linearization. Hence, a GIT-quotient $X^{R,\zeta-ss}_d/\!\!/G_d=\Msp^{R,\zeta-ss}_d$ with stable sublocus $X^{R,\zeta-st}_d/G_d=\Msp^{R,\zeta-st}_d$ exists. Using this, one can show that  all moduli stacks $\Mst^{R,\zeta-ss}_d$ have a coarse moduli space $\Msp^{R,\zeta-ss}_d$ parameterizing S-equivalence classes of $\zeta$-semistable objects, or, equivalently, isomorphisms classes of $\zeta$-polystable objects of dimension vector $d$ if $\zeta$  is in the complement of a countable union of real hypersurfaces in $\mathbb{H}_+^{Q_0}$. (see \cite{Meinhardt4} Example 3.32) A stability condition $\zeta$ having coarse moduli spaces $\Msp^{\zeta-ss}_d$ for all $d\in \NN^{Q_0}$ is called geometric. In case $\zeta_i=\sqrt{-1}$ for all $i\in Q_0$, i.e.\ $\theta=0$, we write $\Msp^{R,ssimp}_d$ for $\Msp^{R,\zeta-ss}_d$ as its points correspond to isomorphism classes of semisimple $\CC Q$-representations satisfying the relations $R$.
\begin{remark} \rm
Notice that $\GG_m$, embedded into $G_d$ diagonally, acts trivially on $X_d$, and $G_d$ induces a $PG_d:=G_d/\GG_m$-action on $X_d$. The character given above descends to a character on $PG_d$, and $X^{R,\zeta-ss}_d$ is also the semistable locus of $X^{R}_d$ with respect to the $PG_d$-linearization. Hence, $\Msp^{R,\zeta-ss}_d=X^{R,\zeta-ss}_d/\!\!/PG_d$ is also the coarse moduli space for $X^{R,\zeta-ss}_d/PG_d$, the stack of ``$d$-dimensional'' projective semistable quiver representations satisfying the relations $R$. It is not difficult to see that $\Msp^{R,\zeta-st}_d$ is in fact a fine moduli space for $X^{R,\zeta-st}_d/PG_d$, in other words, there is an isomorphism $X^{R,\zeta-st}_d/PG_d\cong \Mst^{R,\zeta-st}_d$ of stacks. In particular, $\Msp^{R,\zeta-st}$ carries a universal family $\altP$ of projective stable quiver representations satisfying our relations $R$. The reader should compare this with our final remarks in  Example \ref{projectivization} and the two lessons we have mentioned after Example \ref{S_equivalence}. The morphism $X^{R,\zeta-st}_d/G_d\longrightarrow X^{R,\zeta-st}_d/PG_d\cong \Msp^{R,\zeta-st}_d$ is not an isomorphism. It is not hard to see that this map has a right inverse, i.e.\ a section, if $\gcd(d):=\gcd(d_i:i\in Q_0)=1$. (See \cite{Reineke5}, Section 5.4 for more details) Such a section is nothing else than a family $V=\bigoplus_{i\in Q_0}V_i$ of stable quiver representations on $\Msp^{R,\zeta-st}_d$ such that $\altP=\PP(V)$. The section and within the family is not unique. Any two sections corresponding to $V^{(1)}$ and $V^{(2)}$ differ (up to isomorphism) by a line bundle $L$ on $\Msp^{R,\zeta-st}$ with $V^{(2)}\cong V^{(1)}\otimes_{\mathcal{O}_{\Msp^{R,\zeta-st}_d}} L$ as in Example \ref{projectivization}. (See also Exercise \ref{fiber_projectivization}) Therefore, $V$ on $\Msp^{R,\zeta-ss}$ is only universal up to this weaker equivalence. It has been shown in \cite{Reineke6}, Thm.\ 3.4 that under some mild conditions on the pair $(d,\zeta)$ the moduli space $\Msp^{\zeta-st}_d$ has no ``universal family'' $V$, i.e.\ $X^{\zeta-st}_d/G_d\to \Msp^{\zeta-st}_d$ has no section, if $\gcd(d)>1$.  
\end{remark}
\begin{definition} \label{generic_stability}
 A stability condition $\zeta$ is called generic if $\langle d,d'\rangle=0$ for all $d,d'\in \Lambda^\zeta_\mu:=\{e\in \NN^{Q_0}\mid e=0 \mbox{ or }e\mbox{ has slope }\mu\}$ and all $\mu\in \mathbb{R}$, where $\langle d,d' \rangle=(d,d')-(d',d)$ denotes the antisymmetrized Euler pairing
 \[ (d,d')=\sum_{i\in Q_0}d_id'_i - \sum_{\alpha:i\to j}d_id'_j \]
 satisfying $(\dim V,\dim V')=\dim \Hom_{\kk Q}(V,V')-\dim \Ext^1_{\kk Q}(V,V')$ for all $\kk Q$-representations $V,V'$. 
\end{definition}

\section{From constructible functions to motivic theories}

\subsection{Constructible functions}

Let us start by recalling some facts about constructible functions. A constructible function is a function $a:X(\CC) \to \ZZ$ on the set of (closed) points of  a  scheme/variety/manifold $X$ over $\CC$ with only finitely many values on each connected component of $X$ and such that the level sets of $a$ are the (closed) points of locally closed subsets of $X$.  We denote with $\Con(X)$ the group of constructible functions on $X$.
\begin{exercise}
 Assume that $X$ is connected. Show that the map associating to every irreducible closed subset $V$ of $X$ its characteristic function extends to an isomorphism $\oplus_{x\in X}\ZZ x \cong \oplus_{V\subset X} \ZZ V\xrightarrow{\sim} \Con(X)$, where the first sum is over all not necessarily closed points $x\in X$, and the second sum is taken over all irreducible closed subsets $V\subset X$. 
\end{exercise}
Apparently, we can pull back constructible functions and also multiply them pointwise. Contrary to the usual notation, we denote the pointwise product with $a\cap b$, i.e.\ $(a\cap b)(x)=a(x)b(x)$. The constant function $\unit_X(x)=1$ for all $x\in X(\CC)$ is the unit for the $\cap$-product. There is another product, the external product $a\boxtimes b =\pr_X^\ast(a)\cap\pr_Y^\ast(b)$ of two functions $a\in \Con(X)$ and $b\in \Con(Y)$ on $X\times Y$ such that $a\cap b=\Delta_X^\ast (a\boxtimes b)$ if $Y=X$. The unit for the $\boxtimes$-product is $1\in \ZZ=\Con(\Spec\CC)$. Moreover, we can define a push-forward of a constructible function 
$a\in \Con(X)$ along a morphism $u:X\to Y$ of finite type by\footnote{For every scheme $X$ locally  of finite type over $\CC$, we denote with $X^{an}$ the ``analytification'' of $X$ which is an analytic space locally isomorphic to the vanishing locus of holomorphic functions on $\CC^n$. If $X$ is smooth, $X^{an}$ is a complex manifold. In any case $X^{an}$ carries the analytic topology which is much finer than the Zariski topology on $X$.} $u_!(a)(y):=\int_{u^{-1}(y)^{an}} a\, d\chi_c:= \sum_{m\in \ZZ}m\chi_c\{x\in X\mid u(x)=y, a(x)=m\}^{an}$ for $y\in Y$. Here $\chi_c$ denotes the Euler characteristic with compact support, i.e.\ the alternating sum of the dimensions of the compactly supported cohomology. One can think of $\chi_c$ as a signed measure $\chi_c^X$ on $X$, even though it is only additive and not $\sigma$-additive. Given a constructible function $a$ on $X$, we get a new measure $a\cdot\chi_c^X$ on $X$ of density $a$ with respect to $\chi_c^X$. A push-forward of a measure is well-defined and $u_!(a\chi_c^X)$ has density $u_!(a)$ with respect to $\chi_c^Y$. Using the push-forward, one can define a third product for constructible functions on a monoidal scheme, i.e.\ a scheme $X$ with two maps $0:\Spec\kk\to X$ and $+:X\times X\to X$ of finite type satisfying an associativity and unit law. The convolution product is given by $a b=+_!(a\boxtimes b)$ and is commutative if $+$ is commutative. The unit is given by $0_!(1)$ with $1\in \Con(\Spec \kk)= \ZZ$ being  the unit for the $\boxtimes$-product. If we had taken $X=\AAA$, the convolution product is just the ``constructible version'' of the usual convolution product of integrable functions. The free commutative monoid generated by a scheme $X$ is given by $\Sym(X) = \sqcup_{n\in \NN} \Sym^n(X)$ with $\Sym^n(X)=X^n/\!\!/S_n$, and $\oplus:\Sym(X) \times \Sym(X)\longrightarrow \Sym(X)$ is just the concatenation of unordered tuples of (geometric) points of $X$. The unit $0:\Spec\CC=:\Sym^0(X) \hookrightarrow \Sym(X)$ is given by the ``empty tuple''. We can apply the definition of the convolution product $ab:=\oplus_!(a\boxtimes b)$ to $\Con(\Sym(X))$ making it into a commutative ring. This (convolution) ring has even more structure. Indeed, there is a family of maps $\sigma^n:\Con(X) \to \Con(\Sym^n(X))$ mapping the characteristic function of $V\subseteq X$ to the characteristic function of $\Sym^n(V)\subseteq \Sym^n(X)$.   

\begin{example}\rm
Consider the example $X=\Spec \kk$. Then $\Sym(X)\cong\NN$, and $\Con(X)\cong \ZZ[[t]]$ follows. The convolution product is just the ordinary product of power series and $\Sym^n(at)={ a+n-1 \choose n}t^n$. The pointwise $\cap$-product is known as the Hadamard product of two power series.   
\end{example}
 Let us collect the main properties of the structures described above.
 
\begin{proposition} \label{motivic_theories}
By taking pull-backs and push-forwards of constructible functions, we obtain a functor $\Con$ from the category $\Sch_\kk$ to the category of abelian groups which is both contravariant with respect to all morphisms and covariant with respect to all morphisms of finite type, i.e.\ for every morphism $u:X\to Y$
there is a group homomorphism $u^\ast:\Con(Y)\longrightarrow \Con(X)$, and if $u$ is of finite type, there is also a group homomorphism $u_!:\Con(X)\longrightarrow \Con(Y)$. Moreover, there is  an ``exterior'' product
\[ \boxtimes: \Con(X)\otimes \Con(Y) \longrightarrow \Con(X\boxtimes Y) \]
defined for every pair $X,Y\in \Sch_\kk$ which is associative, symmetric\footnote{If $\tau:X\boxtimes Y \stackrel{\sim}{\to} Y\boxtimes X$ is the transposition, being symmetric means $\tau_!(a\boxtimes b )=\tau^\ast(a\boxtimes b) = b\boxtimes a$} and has a unit $1\in \Con(\Spec \kk )$. Finally, there are also  operations 
\[ \sigma^n: \Con(X) \longrightarrow \Con(\Sym^n( X)) \]
for $n\in \NN$ such that $\sigma^n(1)=1$ holds for all $n\in \NN$. Additionally, we have the following properties.
\begin{enumerate}
\item[(i)] Considered as a functor from $\Sch_\kk^{op}$ to abelian groups, $\Con$ commutes with all (not necessarily finite) products, i.e.\ the morphism
\[ \Con(X) \longrightarrow \prod_{X_i\in\pi_0(X)} \Con(X_i) \]
given by restriction to connected components is an isomorphism for all $X\in \Sch_\kk$.
\item[(ii)] ``Base change'' holds, i.e.\ for every cartesian diagram 
\[ \xymatrix { X\times_Z Y \ar[r]^{\tilde{v}} \ar[d]_{\tilde{u}}&  X \ar[d]^u  \\  Y \ar[r]_v   & Z  } \]
with $u$ and, therefore, also $\tilde{u}$ of finite type, we have $\tilde{u}_!\circ \tilde{v}^\ast = v^\ast \circ u_!$.
\item[(iii)] The functor $\Con$ commutes with exterior products and $\sigma^n$, i.e.\ \\ $(u\boxtimes v)^\ast(a\boxtimes b)=u^\ast(a)\boxtimes v^\ast(b)$ 
for all $u:X\to X', v:Y\to Y', a\in \Con(X'), b\in \Con(Y')$. If $u, v$ are of finite type, then $(u\boxtimes v)_!(a\boxtimes b)=u_!(a)\boxtimes v_!(b)$ and $\Sym^n(u)_!(\sigma^n(a))=\sigma^n(u_!(a))$ for all $a\in \Con(X), b\in \Con(Y), n\in \NN$. 
\item[(iv)] Using the convolution product $ab=\oplus_!(a\boxtimes b)$ and thinking of $\Con(\Sym^n(X))$ as being a subgroup of $\Con(\Sym(X))$ by means of $\big(\Sym^n(X)\hookrightarrow\Sym(X)\big)_!$, we have 
\[ \sigma^n(a+b)=\sum_{l=0}^n \sigma^l(a)\sigma^{n-l}(b) \]
with $\sigma^1(a)=a$ and $\sigma^0(a)=1\in \Con(\Spec\kk)\hookrightarrow \Con(\Sym(X))$ for all $a,b\in \Con(X)$.
\item[(v)] The ``motivic property'' holds, i.e.\ for every $X$ and every closed subscheme $Z\subseteq X$ giving rise to inclusions $i:Z \hookrightarrow X$ and $j:X\!\setminus\! Z \hookrightarrow X$, we have $i^\ast i_!=\id_{\Con(Z)}, j^\ast j_!=\id_{\Con(X\!\setminus\! Z)}, j^\ast i_!=i^\ast j_!=0$ and 
\[ a=i_!i^\ast(a) + j_!j^\ast(a) \quad\forall a\in \Con(X).\]
\item[(vi)] The equation $\sigma^n(\unit_X)=\unit_{\Sym^n(X)}$ holds for all $X$ and all $n\in \NN$ with $\unit_X=(X\to \Spec\CC)^\ast(1)$ and similarly for $\unit_{\Sym^n(X)}$.
\end{enumerate}
\end{proposition}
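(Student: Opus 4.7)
The strategy is to reduce every assertion to its action on characteristic functions of locally closed subsets. The exercise preceding the proposition gives a decomposition $\Con(X)\cong\bigoplus_V\ZZ\cdot\unit_V$ indexed by (reduced) locally closed subsets $V\subseteq X$, so it suffices to define the operations on $\unit_V$ and check everything there, extending $\ZZ$-linearly. Explicitly, I set $u^\ast(\unit_V)=\unit_{u^{-1}(V)}$, $\unit_V\boxtimes\unit_W=\unit_{V\times W}$, $\sigma^n(\unit_V)=\unit_{\Sym^n(V)}$, and for $u$ of finite type $u_!(\unit_V)(y)=\chi_c\bigl((u^{-1}(y)\cap V)^{an}\bigr)$. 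The nontrivial inputs are: (a) for a morphism of finite type, the function $y\mapsto \chi_c(u^{-1}(y)\cap V)$ is constructible (generic fibration, noetherian induction on $Y$); (b) $\chi_c$ is additive on locally closed decompositions; (c) $\chi_c$ is multiplicative on products: $\chi_c(A\times B)=\chi_c(A)\chi_c(B)$.

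With these in hand, I would dispatch the items in the following order. Property (i) is immediate, since the level sets of a constructible function meet only finitely many components nontrivially on each compact piece, and on a component a constructible function is independent of the others. Functoriality of $u^\ast$ is a tautology. Functoriality of $u_!$, i.e. $(v\circ u)_!=v_!\circ u_!$, is a Fubini statement: evaluating both sides at $z\in Z$ reduces to $\chi_c(u^{-1}v^{-1}(z)\cap V)=\int_{v^{-1}(z)}\chi_c(u^{-1}(y)\cap V)\,d\chi_c(y)$, which is (b) applied to the stratification of $v^{-1}(z)$ on which the inner $\chi_c$ is locally constant. Base change (ii) follows by the same fiberwise calculation: both $\tilde u_!\tilde v^\ast\unit_V$ and $v^\ast u_!\unit_V$ evaluate at $y$ to $\chi_c(u^{-1}(v(y))\cap V)$. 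The motivic property (v) is immediate from the orthogonal decomposition $\unit_X=\unit_Z+\unit_{X\setminus Z}$ and the definitions of $i^\ast, i_!, j^\ast, j_!$ on characteristic functions. Property (vi) is the very definition, since $\unit_X=\unit_V$ for $V=X$ and $\Sym^n(X)$ is itself.

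For (iii), compatibility with $\boxtimes$ on characteristic functions is the trivial identity $(u\boxtimes v)^{-1}(V\times W)=u^{-1}V\times v^{-1}W$ and, for pushforward, the multiplicativity (c) applied fiberwise. Compatibility of $\sigma^n$ with pushforward reduces to checking $\Sym^n(u)_!\unit_{\Sym^n V}=\unit_{\Sym^n u(V\cap \text{fiber})}$ evaluated fiberwise, which unfolds to the identity $\chi_c(\Sym^n A)=\binom{\chi_c(A)+n-1}{n}$-type expression (the plethystic evaluation), but at the level of characteristic functions it is just functoriality of $\Sym^n$ applied to the set-theoretic fibers. For (iv), by bilinearity and the disjoint-subset decomposition we may assume $a=\unit_V$ and $b=\unit_W$ with $V,W$ disjoint locally closed subsets of $X$; then $\unit_V+\unit_W=\unit_{V\sqcup W}$, and the key combinatorial fact
\[ \Sym^n(V\sqcup W)\;\cong\;\bigsqcup_{l=0}^n \Sym^l(V)\times \Sym^{n-l}(W) \]
combined with the definition of the convolution product $ab=\oplus_!(a\boxtimes b)$ delivers exactly the stated formula.

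The main obstacle is the functoriality of $u_!$ together with base change (ii), because these encode the content of the Fubini theorem for $\chi_c$ and underlie almost every other point; once these are established, everything else is formal manipulation of characteristic functions. I would isolate a preliminary lemma asserting that $\chi_c$ is additive under locally closed stratification and multiplicative under products, then cite (or prove via constructibility of fibers plus noetherian induction) the fact that fiberwise $\chi_c$ of a constructible function along a finite-type morphism is again constructible. With that in place, the rest of the proposition is a sequence of short verifications on $\unit_V$.
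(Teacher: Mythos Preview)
The paper does not actually supply a proof of this proposition; it is stated as a collection of standard facts about constructible functions and then immediately used as the template for the definition of a general motivic theory. So there is no paper proof to compare against, and I will simply comment on your argument.

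Your overall strategy---reduce to characteristic functions and invoke additivity and multiplicativity of $\chi_c$---is the right one, and your treatment of $u^\ast$, $u_!$, $\boxtimes$, base change, and the motivic property is fine. The genuine gap is in your handling of $\sigma^n$. You say you set $\sigma^n(\unit_V)=\unit_{\Sym^n(V)}$ and then extend ``$\ZZ$-linearly'', but $\sigma^n$ is \emph{not} a group homomorphism: property (iv) says exactly that $\sigma^n(a+b)$ differs from $\sigma^n(a)+\sigma^n(b)$. So your later reduction of (iv) ``by bilinearity'' is circular---there is no bilinearity to invoke. What you must do instead is write an arbitrary $a\in\Con(X)$ as $\sum_i m_i\,\unit_{V_i}$ with the $V_i$ pairwise disjoint locally closed, and \emph{define} the generating series $\sigma_t(a):=\prod_i \sigma_t(\unit_{V_i})^{m_i}$ in $\Con(\Sym(X))[[t]]$, where negative exponents make sense because $\sigma_t(\unit_V)$ has constant term $1$. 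You then owe a check that this is independent of the chosen disjoint decomposition, which follows from the identity $\Sym^n(V\sqcup W)\cong\bigsqcup_{l}\Sym^l(V)\times\Sym^{n-l}(W)$ you already quote together with the fact that any two such decompositions admit a common refinement. After that, (iv) holds by construction.

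This also affects your verification of (iii) for $\sigma^n$: since $u_!(\unit_V)$ is in general an integer-valued constructible function rather than a single characteristic function, the claim $\Sym^n(u)_!\,\sigma^n(a)=\sigma^n(u_!(a))$ cannot be checked ``on $\unit_V$'' alone; you need the correct non-linear extension of $\sigma^n$ first and then a genuine computation (ultimately reducing to Macdonald's formula $\chi_c(\Sym^n A)=\binom{\chi_c(A)+n-1}{n}$, which you allude to but do not use).
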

\begin{exercise}
 Show that the projection formula $u_!(a\cap u^\ast(b))=u_!(a)\cap b$ holds for all $u:X\to Y$ of finite type and all $a\in R(X), b\in R(Y)$ by using the properties mentioned in Proposition \ref{motivic_theories} and $a\cap b=\Delta_X^\ast(a\boxtimes b)$. Hint: Consider the diagram
 \[ \xymatrix { X \ar[d]_u \ar[r]^{\Delta_X} & X\times X \ar[r]^{\id_X \times u} & X\times Y \ar[d]^{u\times \id_Y}   \\ Y \ar[rr]^{\Delta_Y} & & Y\times Y.} \]
\end{exercise}

\subsection{Motivic theories for schemes}

Generalizing constructible functions, we define a motivic theory\footnote{Motivic theories are special cases of reduced motivic $\lambda$-ring $(\Sch,ft)$-theories defined in \cite{DavisonMeinhardt3}. Every reduced motivic $\lambda$-ring $(\Sch,ft)$-theory is a motivic theory in our sense if $\sigma^n(\unit_X)=\unit_{\Sym^n(X)}$ holds for all $X$ and all $n\in \NN$. (cf.\ Proposition \ref{motivic_theories}.(vi))}   to be a rule associating to every scheme $X$ an abelian group $R(X)$, like $\Con(X)$, along with pull-backs $u^\ast:R(Y)\to R(X)$ for all morphisms $u:X\to Y$ and push-forwards $u_!:R(X)\to R(Y)$ if $u$ is of finite type. Moreover, there should be some associative, symmetric exterior product $\boxtimes:R(X)\times R(Y) \to R(X\times Y)$ with unit element $1\in R(\Spec\kk)$, and some operations $\sigma^n:R(X) \to R(\Sym^n(X))$ for all $n\in \NN$, satisfying exactly the same properties as $\Con(X)$ given in Proposition \ref{motivic_theories}. Similar to the case $\Con$, we can construct a $\cap$-product $a\cap b=\Delta_X^\ast(a\boxtimes b)$ with unit $\unit_X=(X\to \Spec\CC)^\ast(1)$ and a convolution product $ab=+_!(a\boxtimes b)$ with unit $1_0=0_!(1)$ if, additionally, $(X,+,0)$ is a (commutative) monoid with $+$ being of finite type. Note that all these products coincide on $R(\Spec\kk)$. 

\begin{exercise} \label{Euler_characteristics}
Given a motivic theory $R$ and a scheme $X$ with morphism $c:X\to \Spec\kk$, we define $[X]_R:=c_!c^\ast(1)\in R(\Spec\kk)$.
 \begin{enumerate} 
 \item[(i)]  Show that $[X]_R=[Z]_R+[X\!\setminus\! Z]_R$ (cut and paste relation) for every closed subscheme $Z\subseteq X$ and $[X\times Y]_R=[X]_R[Y]_R$  by applying the defining properties of Proposition \ref{motivic_theories}. In particular, $X\mapsto [X]_R\in R(\Spec\kk)$ is a generalization of the classical Euler characteristic $\chi_c:\Sch_\kk \to \ZZ=\Con(\Spec\kk)$. 
  \item[(ii)] Use {\rm (i)} to show $[\PP^n]_R=\LL_R^n+\ldots+\LL_R+1=(\LL^{n+1}_R-1)/(\LL_R-1)$ with $\LL_R:=[\AAA]_R.$
 \end{enumerate}
\end{exercise}
\begin{exercise} \label{fiber_bundle} We use the notation introduced in the previous exercise.
\begin{enumerate}
 \item[(i)] Assume $Y\to X$ is a Zariski locally trivial fiber bundle with fiber $F$. Use the cut and paste relation to prove $[Y]_R=[F]_R[X]_R$.
 \item[(ii)] Use {\rm (i)} applied to the  projection onto the first column and induction over $n\in \NN$ to show $[\Gl(n)]_R=\prod_{i=0}^{n-1} (\LL_R^n-\LL_R^i)$.
 \item[(iii)] Use {\rm (i)} to prove $[\Gr(k,n)]_R=[\Gl(n)]_R/[\Gl(k)]_R[\Gl(n-k)]_R$.
\end{enumerate} 
\end{exercise}

A morphism $\eta:R\to R'$ between motivic theories is a collection of group homomorphisms $\eta_X:R(X)\to R'(X)$ commuting with pull-backs, push-forwards and exterior products. It is called a $\lambda$-morphism, if it additionally commutes with the $\sigma^n$-operations. Thus, we obtain a category of motivic theories containing the subcategory of motivic theories with $\lambda$-morphisms.\\
The rule $X\mapsto R(X)=0$ is the terminal object in the category of motivic theories. Moreover, the following holds.
\begin{lemma} \label{initial_object}
 The category of motivic theories has an initial object given by the (completed) relative Grothendieck group $\underline{\Ka}_0(\Sch):X\mapsto \underline{\Ka}_0(\Sch_X)$ as constructed below. The unique morphisms starting at $\underline{\Ka}_0(\Sch)$ are even $\lambda$-morphisms. 
\end{lemma}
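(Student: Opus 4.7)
The plan is to exhibit $\underline{\Ka}_0(\Sch)$ as the obvious candidate, verify the motivic theory axioms, and then observe that the whole structure is rigid enough that any morphism to another motivic theory is forced by its value on $1 \in \underline{\Ka}_0(\Sch_{\Spec\kk}) = \ZZ$. Concretely, I would take $\Ka_0(\Sch_X)$ to be the free abelian group on isomorphism classes $[Z \xrightarrow{u} X]$ of finite type morphisms into $X$, modulo the cut-and-paste relation $[Z \to X] = [Z' \to X] + [Z \setminus Z' \to X]$ for every closed subscheme $Z' \subseteq Z$, and then set $\underline{\Ka}_0(\Sch_X) := \prod_{X_i \in \pi_0(X)} \Ka_0(\Sch_{X_i})$ in order to enforce property~(i) of Proposition~\ref{motivic_theories}. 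The structure maps are given by fibre product (pull-back), composition (push-forward), Cartesian product (exterior product) and $\sigma^n[Z \to X] := [\Sym^n Z \to \Sym^n X]$. Checking Proposition~\ref{motivic_theories}~(i)--(vi) is then a formal exercise, with (v) amounting to a tautology and the main content being base-change~(ii) and the compatibility of $\Sym^n$ with products and composition in~(iii).

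For uniqueness, the key observation is that every generator factors as $[Z \xrightarrow{u} X] = u_! c_Z^*(1)$, where $c_Z : Z \to \Spec \kk$ is the structure map and $1 = [\Spec\kk \to \Spec\kk] \in \underline{\Ka}_0(\Sch_{\Spec\kk})$. Hence for any motivic theory $R$ and any morphism $\eta : \underline{\Ka}_0(\Sch) \to R$ compatible with pull-back, push-forward and the distinguished unit, one is forced to define
$$ \eta_X([Z \xrightarrow{u} X]) = u_!(c_Z^*(1)) = u_!(\unit_Z). $$
Extending to the infinite product via property~(i) then determines $\eta_X$ on all of $\underline{\Ka}_0(\Sch_X)$, which takes care of uniqueness.

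For existence, I would define $\eta_X$ by the formula above. Well-definedness on the cut-and-paste relation is precisely the motivic property~(v) of $R$ applied to $a = \unit_Z$; compatibility with pull-back and push-forward reduces to base change~(ii) and functoriality, and with $\boxtimes$ to property~(iii). For the $\lambda$-compatibility we need $\eta_{\Sym^n X}(\sigma^n[Z \to X]) = \sigma^n(\eta_X[Z \to X])$, i.e.\ $\Sym^n(u)_!(\unit_{\Sym^n Z}) = \sigma^n(u_!(\unit_Z))$. This follows by combining property~(vi), which gives $\unit_{\Sym^n Z} = \sigma^n(\unit_Z)$, with the compatibility of $\sigma^n$ and $u_!$ from~(iii). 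The main obstacle is not any single deep step but the sheer volume of bookkeeping in verifying the axioms for $\underline{\Ka}_0$; once that is in hand, the universal property becomes essentially automatic, since every motivic theory axiom is a directly imposed relation among the generators $[Z \xrightarrow{u} X]$.
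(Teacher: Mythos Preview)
Your approach is essentially identical to the paper's: the same generators-and-relations definition of $\underline{\Ka}_0(\Sch_X)$, the same structure maps, and the same forcing argument $\eta_X([Z\xrightarrow{u}X])=u_!(\unit_Z)$ for uniqueness and existence. The one point worth flagging is that the well-definedness of the $\sigma^n$-operations on $\Ka_0(\Sch_X)$---i.e.\ that $\sigma^n[Z\to X]:=[\Sym^n Z\to\Sym^n X]$ extends over the cut-and-paste relation compatibly with $\sigma^n(a+b)=\sum_l\sigma^l(a)\sigma^{n-l}(b)$---is less ``formal'' than you suggest, and the paper explicitly defers this to the literature (Guill\'en--Looijenga--Mart\'inez--Hern\'andez) rather than treating it as routine bookkeeping.
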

Instead of giving an ad hoc definition of $\underline{\Ka}_0(\Sch)$, let us motivate the construction by looking at constructible functions. Starting with the constant function $1$ on $\Spec\kk$, we take the pull-back $c^\ast(1)=:\unit_X$ for the constant map $c:X \to \Spec\kk$ which is the constant function with value $1$ on $X$, but more importantly, it is the unit object for the $\cap$-product. For any morphism $v:V\to X$ of finite type, consider the function $v_!(\unit_V)\in \Con(X)$ and denote it by\footnote{We already introduced the shorthand $[V]_{\Con}$ for $[V\to \Spec\kk]_{\Con}$ in Exercise \ref{Euler_characteristics}.} $[V\xrightarrow{v} X]_{\Con}$. If $v:V\hookrightarrow X$ is the embedding of a locally closed subscheme, $[V\hookrightarrow X]_{\Con}$ is just the characteristic function of $V$. \\
Using Proposition \ref{motivic_theories}, we get
\begin{eqnarray}
&& \label{eq1} [V\to X]_{\Con}=[Z\to X]_{\Con} + [V\!\setminus\! Z \to X]_{\Con} \mbox{ for all closed }Z\subset V, \\
&& \label{eq2} 1=[\Spec\kk\xrightarrow{\id} \Spec \kk]_{\Con}, \\
&& u^\ast([W\to Y]_{\Con})=[X\times_Y W \to X]_{\Con} \mbox{ for all }u:X\to Y, \\
&& u_!([V\xrightarrow{v} X]_{\Con})=[V\xrightarrow{u\circ v} Y]_{\Con} \mbox{ if }u:X\to Y\mbox{ is of finite type}, \\
&& \label{eq5} [V\xrightarrow{v}X]_{\Con}\boxtimes[W\xrightarrow{w}Y]_{\Con}= [V\times W \xrightarrow{v\times w} X\times Y]_{\Con}, \\
&& \label{eq6} \sigma^n([V\to X]_{\Con})= [\Sym^n(V) \to \Sym^n(X)]_{\Con}.\\
&& \label{eq8} [V\xrightarrow{v}X]_{\Con}=[V'\xrightarrow{v'}X]_{\Con} \mbox{ if there is an isomorphism } \\
&& \nonumber v'':V\to V' \mbox { such that }v=v'\circ v'', 
\end{eqnarray}
Obviously, the same must hold in every motivic category as they share the same properties, and so the same applies to the initial object if it exists. Moreover, for connected $X$ the group $\Con(X)$ is generated by all classes $[V\to X]_{\Con}$ satisfying relation (\ref{eq1}). The same must be true for the initial motivic theory since otherwise the subgroup spanned by the elements $[V\to X]_{init}$ for connected $X$ and extended by Property \ref{motivic_theories}(i) for non-connected $X$ is a proper subtheory of the initial theory which will lead to a contradiction. However, there are more relations in $\Con(X)$ as for example $[\GG_m\xrightarrow{z^d}\GG_m]=d[\GG_m\xrightarrow{\id} \GG_m]$ which might not hold in other motivic theories as for example the initial one. Dropping the subscript ``init'' we will, therefore, define our (hopefully) initial theory by associating to every connected scheme $X$ the group $\Ka_0(\Sch_X)$ generated by symbols $[V\xrightarrow{v}X]$ for every isomorphism class (due to equation (\ref{eq8})) of morphisms $v:V\to X$ of finite type, subject to the relation (\ref{eq1}). For non-connected $X$ we simply put 
\[ \underline{\Ka}_0(\Sch_X):=\prod_{X_i\in \pi_0(X)} \Ka_0(\Sch_{X_i}). \]
To obtain a motivic theory $\underline{\Ka}_0(\Sch)$, we must define $1\in\Ka_0(\Sch_\kk), u^\ast, u_!,\boxtimes$ and $\sigma^n$ as in equations (\ref{eq2})--(\ref{eq6}), at least over connected components.  It has been shown in \cite{GLMH1} that $\sigma^n$-operations satisfying these properties do indeed exist. Moreover, the authors prove that $\sigma^n(a\LL)=\sigma^n(a)\LL^n$ holds for every $a\in \underline{\Ka}_0(X)$ and every $n\in \NN$, were $\LL=c_!c^\ast(1)=[\AA^1\xrightarrow{c} \Spec\CC]\in \Ka_0(\Sch_\kk)$ is considered as an element of $\underline{\Ka}_0(\Sch_{\Sym(X)})$ via the embedding $0_!:\Ka_0(\Sch_\kk)\hookrightarrow \underline{\Ka}_0(\Sch_{\Sym(X)})$.  
\begin{exercise} Use the properties of a morphism between motivic theories to show that $[V\to X] \mapsto [V\to X]_R$ defines a homomorphism $\eta_X:\Ka_0(\Sch_X)\to R(X)$ for connected $X$ which extends to a morphism $\eta:\underline{\Ka}_0(\Sch)\to R$ of motivic theories. Prove that this morphism is the only possible one. Hence, $\underline{\Ka}_0(\Sch)$ is the initial object in the category of motivic theories. Moreover, show that $\eta$ is a $\lambda$-morphism. 
\end{exercise}
The initial property of $\underline{\Ka}_0(\Sch)$ is just a generalization of the well-known property that $X\to [X]\in \Ka_0(\Sch_\kk)$ is the universal Euler characteristics.\\ 
Let $R^{gm}(X)\subset R(X)$ be the subgroup generated by all elements $[V\to X]_R$ if $X$ is connected and $R^{gm}(X):=\prod_{X_i\in \pi_0(X)}R^{gm}(X_i)$ for general $X$. One should think of elements in $R^{gm}(X)$ as ``geometric'' since they are $\ZZ$-linear combinations of elements  obtained by geometric constructions, namely pull-backs and push-forwards of the unit $1\in R(\Spec\CC)$.
\begin{exercise} \label{geometric_part} Show that $X\mapsto R^{gm}(X)$ defines a subtheory of $R$. By construction, it is the image of the $\lambda$-morphism $\eta:\underline{\Ka}_0(\Sch)\to R$ obtained in the previous exercise. Show that $\Con^{gm}=\Con$ and $\underline{\Ka}_0(\Sch)^{gm}=\underline{\Ka}_0(\Sch)$. Prove that $\sigma^n(a\LL_R)=\sigma^n(a)\LL_R^n$ holds for every $a\in R^{gm}(X)$ and every $n\in \NN$.
\end{exercise}

\subsection{Motivic theories for quotient stacks}

In the previous section we generalized constructible functions and the classical Euler characteristic to more refined ``functions'' and invariants. When it comes to moduli problems, we should also be able to compute refined invariants of quotient stacks as they occur naturally in moduli problems. Hence, we need to extend motivic theories to disjoint unions of quotient stacks $X/G$ for schemes $X$ locally of finite type over $\CC$ and linear algebraic groups $G$. This is the topic of this subsection.
\begin{exercise} \label{special}
Given a closed embedding $G\hookrightarrow \Gl(n)$ of a linear algebraic group $G$ and a $G$-action on a scheme $X$. Show that the morphism $X/G \to (X\times_G \Gl(n))/\Gl(n)$ induced by $X\ni x\mapsto (x,1)\in X\times_G\Gl(n)$ and $G\hookrightarrow \Gl(n)$ as in Exercise \ref{stack_morphisms} is in fact an isomorphism of quotient stacks. Hint: Given a principal $\Gl(n)$-bundle $P\to S$ and a $\Gl(n)$-equivariant morphism $\psi:P\to X\times_G\Gl(n)$, show that  $\psi^{-1}(X)\to S$ for $X\hookrightarrow X\times_G\Gl(n)$ is a principal $G$-bundle over $S$. To prove local triviality of $\psi^{-1}(X)\to S$ one has to construct  local sections of $\psi^{-1}(X)\to S$. For this, one can take a local section $\nu:U\to P$ of $P\to S$ and a lift $(f,g):\tilde{U} \to X\times \Gl(n)$ of $\psi\circ \nu:U\to X\times_G \Gl(n)$ on a possibly smaller \'{e}tale neighborhood $s\in \tilde{U}\subset U$ of $s\in S$. Then $\tilde{\nu} :\tilde{U}\ni t \longmapsto\to \nu(t)g(t)^{-1}\in \psi^{-1}(X)$ is a local section of $\psi^{-1}(X)\to S$. Note that if $G$ is special, these \'{e}tale neighborhoods $U$ and $\tilde{U}$ can even be replaced with Zariski neighborhoods.   
\end{exercise}

\begin{definition} A stacky motivic theory $R$ is a  rule associating to every disjoint union $\XXX=\sqcup_{i\in I}X_i/G_i$ of quotient stacks with linear algebraic groups $G_i$ an abelian group $R(\XXX)$ along with pull-backs $u^\ast:R(\YYY)\to R(\YYY)$ for all (1-)morphisms $u:\XXX \to \YYY$ and push-forwards $u_!:R(\XXX)\to R(\YYY)$ if $u$ is of finite type. Moreover, there should be some associative, symmetric exterior product $\boxtimes:R(\XXX)\times R(\YYY) \to R(\XXX\times \YYY)$ with unit element $1\in R(\Spec\kk)$, and some operations $\sigma^n:R(X) \to R(\Sym^n(X))$ for all $n\in \NN$ and all schemes $X$, satisfying the stacky analogue of the  properties of $\Con(-)$ given in Proposition \ref{motivic_theories}.
\end{definition}
\begin{remark} \rm
There are two technical difficulties to overcome when we try to generalize Proposition \ref{motivic_theories}, which serves as our definition of a (stacky) motivic theory,  to disjoint unions of quotient stacks. First of all, we need to explain what the correct generalization of a finite type morphism ought to be. For us, this is a (1-)morphism $u:\XXX\to \YYY$ of algebraic stacks such the preimage of each ``connected component'' $Y/H$ (with connected $Y$) consists of only finitely many connected components $X_i/G_i$ of $\XXX$. Secondly, we need to define $\Sym^n(X/G)$ for quotient stacks. There is an obvious candidate given by the quotient stack $X^n/(S_n\ltimes G^n)$. However, if $G=\{1\}$ is the trivial group, we get the quotient stack $X^n/S_n$ which is different from its coarse ``moduli space'' $\Sym^n(X)=X^n/\!\!/S_n$. To avoid these problems, we only require the existence of $\sigma^n$-operations for schemes $\XXX=X$ and not for general disjoint unions of quotient stacks.     
\end{remark}

\begin{example}\rm 
There is no stacky motivic theory $R$ with $R|_{\Sch_\kk}=\Con$ such that the pull-back
$\rho^\ast:R(X/G) \to R(X)=\Con(X)$ is an embedding.  Indeed, consider the case $X=\Spec\kk$ and $G=\GG_m$. Then $X \xrightarrow{\rho} X/G \to \Spec\kk$ is the identity, and $\rho_!(\unit_X)$ cannot be zero. By assumption, $\rho^\ast\rho_!(\unit_X)$ is also nonzero. However, applying base change to the diagram 
\[ \xymatrix { X\times G \ar[r]^m \ar[d]_{\pr_X} & X \ar[d]^\rho \\ X \ar[r]_\rho & X/G }\]
with $m:X\times G \to X$ denoting the (trivial) group action, $\rho^\ast\rho_!(\unit_X)=\pr_{X\, !}(\unit_{X\times G})=\chi_c(G)\unit_X =0$, a contradiction.  
\end{example}
Applying the functoriality of the pull-back to the previous diagram, we obtain $\pr_X^{\ast}(b)=m^\ast(b)$ for $b=\rho^\ast(a)$. In other words, for every stacky motivic theory $R$, the image of $\rho^\ast$ is contained in the subgroup $R(X)^G:=\{a\in R(X)\mid \pr_X^{\ast}(b)=m^\ast(b)\}$ of ``$G$-invariant'' elements.
Despite the negative result given by the previous example, we will provide a functorial construction which associates to every motivic theory $R$ satisfying 
\begin{equation} \label{eq7}
 \sigma^n(a\LL_R)=\sigma^n(a)\LL_R^n \quad\forall \;a\in R(X)
\end{equation}
another motivic theory $R^{st}$ such that $R^{st}$ extends to a stacks motivic theory, also denoted with $R^{st}$, for which   $\rho^\ast:R^{st}(X/G)\to R^{st}(X)^G$ is an isomorphism. Moreover, there is a morphism $R\to R^{st}|_{\Sch_\kk}$ of motivic theories satisfying the property that every morphism $R\to R'|_{\Sch_\kk}$ with $R'$ being a stacky motivic theory satisfying $\rho^\ast:R'(X/G) \xrightarrow{\sim} R'(X)^G$  must factorize though $R\to R^{st}$. In particular, the restriction functor from the category of stacky motivic theories $R'$ satisfying (\rm \ref{eq7}) and  $\rho^\ast:R'(X/G) \xrightarrow{\sim} R'(X)^G$ has a left adjoint given by $R\to R^{st}$. As we will see, $\Con^{st}=0$. 

Recall that a linear algebraic group $G$ was called special if every \'{e}tale locally trivial principal $G$-bundle is already Zariski locally trivial. In particular, given a closed embedding $G\hookrightarrow \Gl(n)$, the map $\Gl(n)\to \Gl(n)/G$ must be a Zariski locally trivial principal $G$-bundle. On can show that this property is already sufficient for being special. Hence, $\Gl(n)$ is special for every $n\in \NN$. As a result of Exercise \ref{fiber_bundle} we get $[\Gl(n)]_R=[G]_R[\Gl(n)/G]_R$ in $R(\Spec\kk)$ for every motivic theory $R$. In particular, $[G]_R$ is invertible for every special group $G$ if and only if $[\Gl(n)]_R$ is invertible for every $n\in \NN$.
\begin{definition}
 Given a group $G$, a (1-)morphism $u:\PPP\to \XXX$ of stacks is called a principal $G$-bundle on $\XXX$ if $u$ is representable and the pull-back $\tilde{u}:X\times_\XXX \PPP \longrightarrow X$ of  $u$ along every morphism $X\to \XXX$ with $X$ being a scheme is a principal $G$-bundle on $X$.  
\end{definition}

\begin{exercise}
Given a stacky motivic theory $R$. We want to show in several steps that the condition $\rho^\ast: R(X/G)\to R(X)^G$ being an isomorphism for every special group $G$ is equivalent to the condition that $[\PPP\xrightarrow{u} \XXX]_R:=u_!(\unit_\PPP)=[G]\unit_\XXX$ for every special group $G$ and every principal $G$-bundle $u:\PPP\to \XXX$ in the category of disjoint unions of quotient stacks. 
\begin{enumerate}
 \item Given a principal $G$-bundle $\PPP\to \XXX$ and assume for simplicity $\XXX=X/\Gl(n)$. Consider the cartesian diagram 
 \[ \xymatrix {P \ar[r]^{\tilde{\rho}} \ar[d]^{\tilde{u}}  & \PPP \ar[d]^u \\ X \ar[r]^\rho & \XXX.} \]
 By assumption, $\tilde{u}:P\to X$ is a principal $G$-bundle. Use injectivity of $\rho^\ast$ to prove $u_!(\unit_\PPP)=[G]_R\unit_\XXX$ (Hint: Use base change and Exercise \ref{fiber_bundle}.) Extend this result to arbitrary disjoint unions of quotient stacks. 
 \item Conversely, assume  that $u_!(\unit_\PPP)=[G]_R\unit_\XXX$ for every principal $G$-bundle in the category of quotient stacks. 
  Show first that $[G]_R$ is invertible in $R(\Spec\kk)$ with inverse $[\Spec\kk/G]_R$. (Hint: Consider the principal $G$-bundle $\Spec\kk \to \Spec\kk/G$.)
 \item Secondly, prove that $\rho^\ast:R(X/G)\to R(X)^G$ is invertible by showing that $\rho_!(-)/[G]_R$ is an inverse. (see \cite{DavisonMeinhardt3}, Lemma 5.13 if you need help)
\end{enumerate}

\end{exercise}

For connected $X$ we define $R^{st}(X):=R(X)[ [\Gl(n)]^{-1}_R \mid n\in \NN]$ using the $R(\Spec\kk)$-module structure of $R(X)$ by means of the $\boxtimes$-product. We extend it via $R^{st}(X)=\prod_{X_i\in \pi_0(X)}R^{st}(X_i)$ to non-connected $X$. The morphism $R(X)\to R^{st}(X)$ is obvious, and it is also easy to see how to extend $u_!, u^\ast$ for $u:X\to Y$ and the $\boxtimes$-product. The only nontrivial part is the extension of $\sigma^n$. For $X\in \Sch_\kk$ and $a\in R(X)$ define
\[ \sigma_t(a):=\sum_{n\in \NN} \sigma^n(a)t^n \in R(\Sym(X)[[t]] \]
The Adams operations $\psi^n:R(X)\to R(\Sym^n(X))\subseteq R(\Sym(X))$ are defined by means of the series
\[ \psi_t(a):=\sum_{n\ge 1} \psi^n(a)t^{n} := \frac{d\log \sigma_t(a)}{d\log t}=t\sigma_t(-a)\frac{d\sigma_t(a)}{dt},\]
where the product is the convolution product in $R(\Sym(X))$. Using the properties of $\sigma^n$, we observe $\sigma_t(0)=1$ and $\sigma_t(a+b)=\sigma_t(a)\sigma_t(b)$. Thus, $\psi_t(0)=0$ as well as $\psi_t(a+b)=\psi_t(a)+\psi_t(b)$ follows. Property {\rm (\ref{eq7}) } implies $\psi^n(a P(\LL_R))=\psi^n(a)P(\LL_R^n)$ for every polynomial $P(x)\in \ZZ[x]$. Due to Exercise \ref{fiber_bundle}(ii), we have to extend $\psi^n$ to $R^{st}(X)$ by means of 
\[ \psi^n\left( \frac{a}{\prod_{i\in I} [\Gl(m_i)]_R }\right):= \frac{\psi^n(a)}{\prod_{i\in I} P_{m_i}(\LL^n_R)} \]
using the polynomial $P_m(x)=\prod_{j=0}^{m-1}(x^m-x^j)$ satisfying $[\Gl(m)]_R=P_m(\LL_R)$. Having extended the Adams operations, we can also extend the $\sigma^n$-operations by putting
\[ \sigma_t(a)=\exp\left(\int \psi_t(a)\frac{dt}{t}\right). \]
Note that the last expression involves rational coefficient, but one can show that the rational coefficients disappear in the expression for 
\[ \sigma_t\left(\frac{a}{\prod_{i\in I}[\Gl(m_i)]_R}\right)=\exp\left(\sum_{n\ge 1} \frac{\psi^n(a)t^n}{n\prod_{i\in I} P_{m_i}(\LL_R^n)} \right) \]
if we express $\psi^n(a)$ in terms of $\sigma^m(a)$ for $1\le m\le n$. (See \cite{DavisonMeinhardt3}, Appendix B for more details.) 
\begin{exercise}
 Show that $\Con^{st}(X)=0$ for all $X$.
\end{exercise}

Now, as we have constructed $R^{st}$ on schemes, we will put $R^{st}(\XXX):=R^{st}(X)^G$ for a  quotient stack $\XXX=X/G$ with special group $G$ and connected $X$. We have to show that this definition is independent of the presentation of the quotient stack. For this let $\XXX\cong Y/H$ be another presentation with a special group $H$. Let us form the cartesian square
\[ \xymatrix @C=1.5cm @R=1.5cm { X\times_\XXX Y\times G\times H  \ar@/_1pc/[d] \ar@/^1pc/[d] \ar@/^1pc/[r] \ar@/_1pc/[r] &  X\times_\XXX Y \times H  \ar@/^1pc/[d] \ar@/_1pc/[d] \ar[r]^{\rho''} & Y\times H \ar@/^1pc/[d]^{m_Y} \ar@/_1pc/[d]_{\pr_Y} \\ 
X \times_\XXX Y \times G  \ar[d]^{\tau''} \ar@/^1pc/[r] \ar@/_1pc/[r] & X\times_\XXX Y  \ar[d]^{\tau'} \ar[r]^{\rho'} & Y  \ar[d]^\tau \\
X\times G \ar@/^1pc/[r]^{m_X} \ar@/_1pc/[r]_{\pr_X} & X  \ar[r]^\rho & \XXX }
\] 
with  $\rho',\rho''$ and $\tau',\tau''$ being $G$- respectively $H$-principal bundles. The other maps are either projections or actions of $G$ or $H$. Applying $R^{st}$, we get the following diagram with exact rows and columns by construction of $R^{st}$, where $K$ denotes the kernel of say $\pr_Y^\ast-m_Y^\ast$. 
\[ \xymatrix @C=1.3cm { & 0 \ar[d] & 0 \ar[d] & 0 \ar[d] \\ 
 & K \ar[r] \ar[d] &  R^{st}(X) \ar[r]^{\pr_X^\ast-m_X^\ast} \ar[d]^{\tau'^\ast} & R^{st}(X\times G) \ar[d]^{\tau''^\ast} \\
0 \ar[r] & R^{st}(Y) \ar[r]^{\rho'^\ast} \ar[d]^{\pr_Y^\ast-m_Y^\ast} & R^{st}(X\times_\XXX Y) \ar[r] \ar[d] & R^{st}(X\times_\XXX Y \times G) \ar[d] \\
0 \ar[r] & R^{st}(Y\times H) \ar[r]_{\rho''^\ast} & R^{st}(X\times_\XXX Y \times H) \ar[r] & R^{st}(X\times_\XXX Y \times G\times H) } 
\]
Hence, $R^{st}(X)^{G}\cong K\cong R^{st}(Y)^{H}$ showing that $R^{st}(\XXX)$ is independent of the choice of a presentation.  
Given a morphism $u:X/G \to Y/H$ of quotient stacks, we form the cartesian diagram
\[ \xymatrix { X\times_{Y/H} Y \ar[d]_{\tilde{\tau}} \ar[r]^(0.4){\tilde{\rho}} & X/G\times_{Y/H} Y \ar[r]^(0.6){\tilde{u}} \ar[d] & Y \ar[d]^\tau \\
 X \ar[r]^\rho & X/G \ar[r]^u & Y/H. } \]
For $a\in R^{st}(X/G)\cong R^{st}(X)^G$ and $b\in R^{st}(Y)^H$ we put 
\begin{eqnarray*} 
u_!(a)&:=&(\tilde{u}\circ \tilde{\rho})_!\tilde{\tau}^\ast(a)/[G]_R\in R^{st}(Y)^H\mbox{ and }\\
 u^\ast(b)&:=&\tilde{\tau}_!(\tilde{u}\circ\tilde{\rho})^\ast(b)/[H]_R\in R^{st}(X)^G\\
 a\boxtimes b&:=& a\boxtimes b\in R^{st}(X\times Y)^{G\times H}.
\end{eqnarray*}
For disjoint unions $\XXX=\sqcup_{i\in I}X_i/G_i$ of connected quotient stacks, we can always assume that $G_i$ is special for all $i\in I$ due to Exercise \ref{special}. Then, we need to define $R^{st}(\XXX):=\prod_{i\in I}R^{st}(X_i/G_i)$ according to Proposition \ref{motivic_theories}(i), and extend $u_!, u^\ast$ and $\boxtimes$ in the natural way.

Given a morphism $\eta:R\to R'|_{\Sch_\kk}$ with $\rho^\ast:R'(X/G)\to R'(X)^G$ being an isomorphism, we define $\eta_{X/G}:R^{st}(X/G)=R(X)^G \xrightarrow{\eta_X} R'(X)^G\xrightarrow{\rho^{\ast\, -1}} R'(X/G)$ with $\rho^{\ast\, -1}(a)=\rho_!(a)/[G]_R$ which is the only possible choice to extend $\eta$ to a morphism $R^{st}\to R'$ of stacky motivic theories. More details in a more general context are given in \cite{DavisonMeinhardt3}, Section 5.

\section{Vanishing cycles}

The aim of this section is to introduce the notion of a vanishing cycle taking values in a (stacky) motivic theory $R$. We start by considering vanishing cycles of morphisms $f:X\to \AAA$ defined on smooth schemes $X$.

\subsection{Vanishing cycles for schemes}

\begin{definition}
 Given a motivic theory $R$, a vanishing cycle\footnote{The definition given here differs from the one given in \cite{DavisonMeinhardt3} for the sake of simplicity. We do not require the support property but a blow-up formula.} (with values in $R$) is a rule  associating to every regular function $f:X\to \AAA$ on a smooth scheme/variety or complex manifold $X$ an element $\phi_f\in R(X)$ such that the following holds.
 \begin{enumerate}
  \item If $u:Y\to X$ is a smooth, then $\phi_{f\circ u}=f^\ast(\phi_f)$.
  \item Let $X$ be a smooth variety containing a smooth closed subvariety $i:Y\hookrightarrow X$. Denote by $j:E\hookrightarrow \Bl_Y X$ the exceptional divisor of the blow-up $\pi:\Bl_Y X \to X$ of $X$ in $Y$. Then the  formula  \[ \pi_!\big( \phi_{f\circ \pi} - j_! \phi_{f\circ \pi\circ j}\big) =  \phi_f - i_!\phi_{f\circ i}\]     holds for every $f:X\to \AAA$.
  \item Given two morphisms $f:X\to \AAA$ and $g:Y\to \AAA$ on smooth $X$ and $Y$, we introduce the notation $f\boxtimes g:X\times Y\xrightarrow{f\times g} \AAA\times \AAA\xrightarrow{+} \AAA$. Then $\phi_{f\boxtimes g}=\phi_f\boxtimes \phi_g$ in $R(X\times Y)$. Moreover, $\phi_{\Spec\kk\xrightarrow{0}\AAA}(1)=1$.
 \end{enumerate}
 
\end{definition}
\begin{lemma} \label{vanishing_cycle_morphism}
A collection of elements $\phi_f\in R(X)$ for regular functions $f:X\to \AAA$ on smooth schemes $X$ satisfying the properties (1),(2) and (3) is equivalent to a collection of group homomorphisms\footnote{The collection of group homomorphisms $\phi_f$  is what is called a morphisms of (motivic) ring $(Sm,prop)$-theories over $\AAA$ in \cite{DavisonMeinhardt3}.} $\phi_f:\underline{\Ka}_0(\Sch_X)\to R(X)$ for all regular functions $f:X\to \AAA$ on arbitrary schemes $X$ such that the following diagrams commute
\[ \xymatrix @C=1.5cm{ \underline{\Ka}_0(\Sch_X) \ar[r]^{\phi_f} \ar[d]^{u^\ast} & R(X) \ar[d]^{u^\ast} \\ \underline{\Ka}_0(\Sch_Y) \ar[r]^{\phi_{f\circ u}} & R(Y) } \qquad\mbox{if }u:Y\to X\mbox{ is smooth,} \]
\[ \xymatrix @C=1.5cm{ \underline{\Ka}_0(\Sch_Y) \ar[r]^{\phi_{f\circ u}} \ar[d]^{u_!} & R(Y) \ar[d]^{u_!} \\ \underline{\Ka}_0(\Sch_X) \ar[r]^{\phi_{f}} & R(X) } \qquad\mbox{if }u:Y\to X\mbox{ is proper,} \]
\[ \xymatrix @C=2.5cm { \underline{\Ka}_0(\Sch_X)\otimes\underline{\Ka}_0(\Sch_Y) \ar[r]^{\phi_{f}\otimes \phi_g} \ar[d]^{\boxtimes} & R(X)\otimes R(Y) \ar[d]^{\boxtimes} \\ \underline{\Ka}_0(\Sch_{X\times Y}) \ar[r]^{\phi_{f\boxtimes g}} & R(X\times Y) }  \]
and $\phi_{\Spec\CC\xrightarrow{0}\AAA}(1)=1$.
\end{lemma}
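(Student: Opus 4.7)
The plan is to view the ``collection of group homomorphisms $\phi_f: \underline{\Ka}_0(\Sch_X)\to R(X)$'' as being determined by its values on the universal classes $[X\xrightarrow{\id}X]$ on smooth $X$, and to reconstruct the former from the latter using Bittner's presentation of the relative Grothendieck group. Concretely, I will set $\phi_f := \phi_f([X\xrightarrow{\id}X])$ in one direction and $\phi_f([V\xrightarrow{v}X]) := v_!(\phi_{f\circ v})$ (for $V$ smooth and $v$ proper) in the other, and verify that the two constructions are mutually inverse.

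Going from the group homomorphisms to the elements and checking the three properties is essentially bookkeeping. Property (1) follows by evaluating the smooth-pullback square on $[X\xrightarrow{\id}X]$, since $u^\ast[X\xrightarrow{\id}X]=[Y\xrightarrow{\id}Y]$ for smooth $u:Y\to X$. Property (3) follows from $[X\xrightarrow{\id}X]\boxtimes[Y\xrightarrow{\id}Y]=[X\times Y\xrightarrow{\id}X\times Y]$, and the normalization is given. For property (2), the cut-and-paste identity
\[ [X\xrightarrow{\id}X] - [Y\hookrightarrow X] \;=\; [\Bl_Y X\xrightarrow{\pi}X] - [E\hookrightarrow X] \]
holds in $\underline{\Ka}_0(\Sch_X)$ because $\Bl_Y X\setminus E\cong X\setminus Y$; applying $\phi_f$ and using the proper-pushforward square for the proper maps $i$, $\pi$, and $\pi\circ j$ yields the desired blow-up formula.

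The reverse direction is the technical heart. The tool I will invoke is Bittner's presentation theorem: in characteristic zero, $\Ka_0(\Sch_X)$ is generated by classes $[V\xrightarrow{v}X]$ with $V$ smooth and $v$ proper, modulo $[\varnothing]=0$ and the blow-up relation, where the compositions to $X$ are understood via $v$. This presentation rests on Hironaka's resolution of singularities and the weak factorization theorem; I will cite it rather than reprove it. Setting $\phi_f([V\xrightarrow{v}X]) := v_!(\phi_{f\circ v})$ on such generators is well defined: property (2) of the input data, pushed forward to $X$ along $v$ using functoriality of $_!$, says precisely that the blow-up relation goes to zero. The map then extends by additivity to $\Ka_0(\Sch_X)$ and componentwise to $\underline{\Ka}_0(\Sch_X)$.

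Verifying the three compatibility squares for the extended map reduces to checking them on the smooth-proper generators. For the smooth-pullback square, the base-change property of $R$ (Proposition \ref{motivic_theories}(ii)) combined with property (1) applied to the smooth base change $V\times_X Y\to V$ does the job; note that smoothness and properness are both stable under pullback, so the pulled-back generator is of the required form. The proper-pushforward square is immediate from functoriality of $_!$ applied to $V\xrightarrow{v}Y\xrightarrow{u}X$. The exterior-product square follows from property (3) and the compatibility of $\boxtimes$ with $_!$ (Proposition \ref{motivic_theories}(iii)). The main obstacle, as indicated, is the appeal to Bittner's presentation; once that is granted, everything else is a formal exercise combining base change, functoriality of proper pushforward, and the three axioms on smooth inputs.
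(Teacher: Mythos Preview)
Your proposal is correct and follows essentially the same approach as the paper, which leaves the proof as an exercise with the hint to use Bittner's presentation of $\Ka_0(\Sch_Z)$ via smooth-proper generators and blow-up relations, and the Ansatz $\phi_f([V\xrightarrow{v}X]):=v_!\phi_{f\circ v}$. You have carried out exactly this strategy, including the verification of the three compatibility squares on generators via base change, functoriality of $(-)_!$, and the axioms (1)--(3).
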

\begin{exercise}
 Proof the lemma using the following fact (see \cite{Bittner04}, Thm.\ 5.1). The group $\Ka_0(\Sch_Z)$ can also be written as the abelian group generated by symbols $[X\xrightarrow{p} Z]$ with smooth $X$ and proper $p$ subject  the ``blow-up relation'': If $i:Y\hookrightarrow X$ is a smooth  subvariety and $\pi:\Bl_Y X \to X$ the blow-up of $X$ in $Y$ with exceptional divisor $j:E\hookrightarrow \Bl_Y X$, then $[\Bl_Y X \xrightarrow{p\pi} Z] - [E \xrightarrow{p\pi j} Z] = [X\xrightarrow{p}Z]-[Y\xrightarrow{p i} Z]$. Hint: Given a function $f:Z\to \AAA$, try the Ansatz $\phi_f([X\xrightarrow{p} Z]):=p_!\phi_{f\circ p}$ for a proper morphism $X\xrightarrow{p}Z$ on a smooth scheme $X$, where $\phi_{f\circ p}\in R(X)$ on the right hand side is given by our family of elements. In particular, $\phi_f(\unit_X)=\phi_f\in R(X)$ for a regular function $f$ on a smooth scheme $X$.
\end{exercise}
We need to apologize for using the same symbol $\phi_f$ with two different meanings. However, with a bit of practice it should be clear from the context which interpretation is used.  

\begin{example}\rm
 For every motivic theory there is a canonical vanishing cycle  such that $\phi^R_{can,f}=\unit_{X}\in R(X)$ for every $f:X\to \AAA$. Hence, it does not depend on $f$, and the map $\phi_f:\underline{\Ka}_0(\Sch_X)\to R(X)$ is just the morphism constructed in Lemma \ref{initial_object}.
\end{example}

Let us look at the following more interesting examples.
\begin{example} \rm
 Let $R=\Con$. For $x\in X$ we fix a metric on  a an analytic neighborhood of $x\in X^{an}$ for example by embedding such a neighborhood into $\CC^n$. We form the so-called Milnor fiber $\MF_{f}(x):=f^{-1}(f(x)+\delta)\cap B_\varepsilon(x)$, where $B_\varepsilon(x)$ is a small open ball around $x\in X$ and $0<\delta \ll \varepsilon \ll 1$ are small real parameters. Notice, that the Milnor fiber depends on the choice of the metric and the choice of $\delta,\varepsilon$. However, its reduced cohomology and its Euler characteristic $\chi(\MF_f(x))$ are independent of the choices made. We finally define $\phi^{con}_f(x):=1-\chi(\MF_f(x))$ for sufficiently small $\delta,\varepsilon$. One can show that the properties listed above are satisfied.  Moreover,  $\phi^{con}_f$   agrees with the Behrend function of $\Crit(f)$ up to the sign $(-1)^{\dim X}$.  
\end{example}
\begin{example} \rm
The previous example has a categorification. For a closed point $t\in \AAA(\CC)$ consider the cartesian diagram
\[ \xymatrix @C=1.5cm { X^{an} \ar[d]^{f} & X^{an}\times_\CC \CC \ar[d]^{\tilde{f}} \ar[l]^{q_t} \\ \CC & \ar[l]^{t+\exp(-)} \CC.}\]
Denoting the inclusion of the fiber $X_t=f^{-1}(t)$ into $X$ by $\iota_t$, the (classical) vanishing cycle $\phi^{perv}_f$ is defined via 
\[ \oplus_{t\in \AAA(\CC)} \iota_{t\,!}\Cone(\QQ_{X_t}\longrightarrow \iota_t^\ast q_{t\,\ast} q_t^\ast \QQ_X), \]
in the derived category $D^b(X,\QQ)$ of sheaves of $\QQ$-vector spaces on $X$. Here, $\QQ_X$ respectively $\QQ_{X_t}=\iota_t^\ast \QQ_X$ denotes the locally constant sheaf on $X$ respectively $X_t$ with stalk $\QQ$. The morphism is the restriction to $X_t$ of the adjunction $\QQ_X \longrightarrow q_{t\,\ast} q_t^\ast \QQ_X$.  Spelling out the definition we see that  the stalk of $\phi^{perv}_f$ at $x$ is given by the reduced cohomology of the Milnor fiber $\MF_f(x)$ shifted by $-1$. \\
Associating to every connected $X$ the Grouthendieck group $\Ka_0(D^b_{con}(X,\QQ))=\Ka_0(\Perv(X))$ of the triangulated subcategory $D^b_{con}(X,\QQ)\subset D^b(X,\QQ)$ consisting of complexes of sheaves of $\QQ$-vector spaces with constructible cohomology, we obtain a motivic theory $\underline{\Ka}_0(D^b_{con}(-,\QQ))$ with $\underline{\Ka}_0(D^b_{con}(X,\QQ)):=\prod_{X_i\in \pi_0(X)} \Ka_0(D^b_{con}(X_i,\QQ))$. Since $\phi_f^{perv}$ turns out to be a complex with constructible cohomology, we can take its class  in $\underline{\Ka}_0(D^b_{con}(X,\QQ))$ and get a vanishing cycle satisfying all required properties. 
\end{example}
\begin{example} \rm
The previous example has a refinement $\phi_f^{mhm}\in D^b(\MHM(X)_{mon})$ involving (complexes of) ``monochromatic mixed Hodge modules with monodromy groups of the form $\mu_n$, the group of n-th roots of unity, for some $n\in \NN$. Forgetting the Hodge and the monodromy structure, we get a functor $D^b(\MHM(X)_{mon})\longrightarrow D^b_{con}(X,\QQ)$ mapping $\phi_f^{mhm}$ to $\phi_f^{perv}$. By passing to Grothendieck groups, we get a vanishing cycle $\phi^{mhm}_f$ with values in $\underline{\Ka}_0(D^b(\MHM(X))_{mon})=\underline{\Ka}_0(\MHM(X)_{mon}):=\prod_{X_i\in \pi_0(X)} \Ka_0(\MHM(X_i)_{mon})$. 
\end{example}

In the remaining part of this subsection we will construct vanishing cycles depending functorially on $R$. First of all we need to enlarge $R$ by defining a new motivic theory $R(-\times \AAA)$ mapping $X$ to $R(X\times \AAA)$ and using the exterior product 
\[ R(X \times \AAA)\otimes R(Y\times \AAA) \xrightarrow{\boxtimes} R( X\times Y \times \AA^2) \xrightarrow{(\id\times +)_!} R(X\times Y\times \AAA) \]
with unit $1':=0_!(1)\in R(\Spec\CC\times\AAA)=R(\AAA)$ and the $\sigma^n$-operations
\[ R(X\times \AAA) \xrightarrow{\sigma^n} R(\Sym^n(X\times \AAA))\longrightarrow R(\Sym^n(X)\times \Sym^n(\AAA)) \xrightarrow{(\id\times +)_!} R(\Sym^n(X)\times\AAA). \]
\begin{exercise}
 Check the properties of a motivic theory given in Proposition \ref{motivic_theories}.
\end{exercise}

Given a scheme $X$, let  $\GG_m$ act on $X\times \AAA$ via $g(x,z)=(x,gz)$. For connected $X$ we denote with $R^{gm}_{\GG_m}(X\times \AAA)$ the subgroup of $R(X\times \AAA)$ generated by elements $[Y\xrightarrow{f} X\times \AAA]_R$ such that $Y$ carries a good\footnote{An action of $\GG_m$ on $Y$ is called good if every point $y\in Y$ has an affine $\GG_m$-invariant neighborhood.} $\GG_m$-action for which $f$ is homogeneous of some degree $d> 0$, i.e.\ $f(gy)=g^df(y)$. Notice, that such a $Y$ will carry many actions for which $f$ is homogeneous. Indeed, given $0\not=n\in \NN$, let $\GG_m$ act on $Y$ via $g\star y:=g^n y$ using the old action on the right hand side. Then, $f$ is homogeneous of degree $dn$ with respect to the new action. In particular, given finitely many generators $[Y_i\xrightarrow{f_i} X\times \AAA]$, we can always assume that the degrees of $f_i$ are equal. Finally, we put $R^{gm}_{\GG_m}(X\times \AAA):=\prod_{X_i\in \pi_0(X)} R^{gm}_{\GG_m}(X_i\times \AAA)$ for non-connected $X$. 

\begin{lemma} \label{monodromy_extension}
 The subgroup $R^{gm}_{\GG_m}(X\times \AAA)\subset R(X\times \AAA)$ is invariant under pull-backs, push-forwards, exterior products and the $\sigma^n$-operations. Moreover, $\pr_X^\ast$ maps $R^{gm}(X)$  onto a ``$\lambda$-ideal'' $I^{gm}_X\subseteq R^{gm}_{\GG_m}(X\times \AAA)$, i.e.\ $a\boxtimes b \in I^{gm}_{X\times Y}$ for $a\in I^{gm}_X, b\in R^{gm}_{\GG_m}(Y\times \AAA)$ and $\sigma^n(a)\in I^{gm}_{\Sym^n(X)}$ for $a\in I^{gm}_X$. 
\end{lemma}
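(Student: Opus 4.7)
The plan is to verify each closure property by reducing to generators $[Y\xrightarrow{f}X\times\AAA]_R$ of $R^{gm}_{\GG_m}$, where $Y$ carries a good $\GG_m$-action making $f$ homogeneous of some degree $d>0$. Invariance under pull-back along $u:X'\to X$ and push-forward along $u:X\to X'$ of finite type is immediate: pull-back yields $[Y\times_XX'\to X'\times\AAA]$ with inherited $\GG_m$-action, and push-forward leaves source and group action untouched. For the exterior product of generators of degrees $d$ and $e$, first rescale the two actions via $t\star y=t^ey$ and $t\star z=t^dz$ so that both become homogeneous of the common degree $de$; the diagonal action on $Y\times Z$ then makes $f_\AAA(y)+g_\AAA(z)$ homogeneous of degree $de$. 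Finally, $\sigma^n([Y\to X\times\AAA])=[\Sym^n(Y)\to\Sym^n(X)\times\AAA]$ carries the diagonal $\GG_m$-action (good because $Y$'s is and $S_n$ is finite), and $\sum_if_\AAA(y_i)$ is still homogeneous of degree $d$.

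For the $\lambda$-ideal property, note first that $\pr_X^\ast([V\xrightarrow{p}X])=[V\times\AAA\to X\times\AAA]$ with the scaling $\GG_m$-action on the $\AAA$-factor has $\AAA$-component homogeneous of degree $1$, so $I^{gm}_X\subseteq R^{gm}_{\GG_m}(X\times\AAA)$. For the $\boxtimes$-ideal property, by $\ZZ$-linearity it suffices to take $a=\pr_X^\ast([V\xrightarrow{p}X])$ and $b=[W\xrightarrow{g}Y\times\AAA]$: unwinding definitions, $a\boxtimes_{R(-\times\AAA)}b$ is represented by $V\times\AAA\times W\to X\times Y\times\AAA$ sending $(v,z,w)\mapsto(p(v),g_Y(w),z+g_\AAA(w))$, and the automorphism $\psi:(v,z,w)\mapsto(v,w,z+g_\AAA(w))$ of $V\times W\times\AAA$ identifies this class with $\pr_{X\times Y}^\ast([V\times W\to X\times Y])\in I^{gm}_{X\times Y}$. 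An analogous base-change argument shows that push-forward along $u\times\id_\AAA$ sends $I^{gm}$ into $I^{gm}$, a fact needed below.

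The hard part is closure of $I^{gm}$ under $\sigma^n$. Using multiplicativity $\sigma_t(a+b)=\sigma_t(a)\sigma_t(b)$ and convolution-closedness of $I^{gm}$ (which combines the $\boxtimes$-ideal property with push-forward invariance along $\oplus\times\id_\AAA$), the problem reduces to $\sigma^n(\pr_X^\ast([V\xrightarrow{p}X]))\in I^{gm}_{\Sym^n(X)}$ for each generator and each $n\geq 1$. By Proposition~\ref{motivic_theories}(iii),(vi) this class is represented by $[\Sym^n(V\times\AAA)\to\Sym^n(X)\times\AAA]$ via $\{(v_i,z_i)\}\mapsto(\{p(v_i)\},\sum_iz_i)$; factoring through $\Sym^n(V)\times\AAA$ and using base change along $\Sym^n(p)\times\id_\AAA$, it suffices to prove $[\Sym^n(V\times\AAA)\to\Sym^n(V)\times\AAA]\in\pr_{\Sym^n(V)}^\ast(R^{gm}(\Sym^n(V)))$. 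The crucial observation is that the translational $\GG_a$-action $t\cdot\{(v_i,z_i)\}:=\{(v_i,z_i+t)\}$ on $\Sym^n(V\times\AAA)$ is free: if a permutation $\sigma$ witnesses $\{(v_i,z_i+t)\}=\{(v_i,z_i)\}$, then summing $z_{\sigma^j(i)}-z_{\sigma^{j-1}(i)}=t$ around a cycle of length $k$ yields $kt=0$, hence $t=0$. Consequently $\Sym^n(V\times\AAA)$ is a Zariski-locally trivial $\AAA$-bundle over the geometric quotient $W$; on each trivializing open the composite to $\Sym^n(V)\times\AAA$ has the form $(u,t)\mapsto(q(u),h(u)+nt)$, and the affine substitution $s=h(u)+nt$ turns it into $(u,s)\mapsto(q(u),s)=\pr_{\Sym^n(V)}^\ast([U\xrightarrow{q}\Sym^n(V)])$, so cut-and-paste assembles the desired global identity.
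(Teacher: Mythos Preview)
Your argument is correct and, for the first part and the $\boxtimes$-ideal step, matches the paper's proof essentially verbatim (the same shearing isomorphism $(v,z,w)\mapsto(v,w,z+g_{\AAA}(w))$).

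For the $\sigma^n$-closure of $I^{gm}$ the two arguments diverge in presentation. The paper writes down a single global $S_n$-equivariant isomorphism
\[
\psi:(V^n\times\AA^n)^0\times\AAA\longrightarrow (V\times\AAA)^n,\qquad \big((v_1,\ldots,v_n,z_1,\ldots,z_n),z\big)\longmapsto\big((v_1,z_1+z/n),\ldots,(v_n,z_n+z/n)\big),
\]
where $(V^n\times\AA^n)^0=\{\sum z_i=0\}$; modding out $S_n$ gives $\Sym^n(V\times\AAA)\cong\big((V^n\times\AA^n)^0/\!\!/S_n\big)\times\AAA$ globally, and the class is visibly $\pr_{\Sym^n(X)}^\ast\big[(V^n\times\AA^n)^0/\!\!/S_n\to\Sym^n(X)\big]$. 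You instead observe that the translational $\GG_a$-action is free, invoke Zariski-local triviality of the resulting $\AAA$-bundle, and reassemble via cut-and-paste. This works here, but be aware that the implication ``free $\GG_a$-action $\Rightarrow$ scheme quotient with Zariski-locally trivial projection'' is not a black box in general; in the case at hand it holds precisely because the map $\{(v_i,z_i)\}\mapsto\sum z_i$ furnishes a \emph{global} $\GG_a$-equivariant coordinate, whose zero fiber is exactly the paper's hyperplane $(V^n\times\AA^n)^0/\!\!/S_n$. Recognizing this collapses your local-plus-cut-and-paste argument into the paper's one-line global trivialization.
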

\begin{exercise}
 Check the first sentence of the previous lemma.
\end{exercise}
\begin{proof}
 To show that $I^{gm}_X$ is a $\lambda$-ideal, it suffices to look at generators $[V\times\AAA\xrightarrow{f\times \id_{\AAA}} X\times\AAA]$ and $[W \xrightarrow{(g,h)} Y\times \AAA]$  of $I^{gm}_X$ and $R^{gm}_{\GG_m}(Y\times\AAA)$  respectively with $[V\xrightarrow{f} X]\in R^{gm}(X)$. (cf.\ Exercise \ref{generators}) For the $\boxtimes$-product 
\[ [V\times \AAA\to X\times \AAA] \boxtimes [W \to Y\times \AAA] = [V\times\AAA\times W\xrightarrow{u} X\times Y\times \AAA] \]
with $u(v,z,w)=(f(v),g(w),z+h(w))$ we use the isomorphism 
\[ V\times  \AAA \times W \ni(v,z,w)\longmapsto (v,w,z+h(w))\in V\times W\times \AAA\] 
to show $[V\times\AAA\times W\xrightarrow{u} X\times Y\times \AAA]=\pr^\ast_{X\times Y}([V\times W\xrightarrow{f\times g} X\times Y])\in I^{gm}_{X\times Y}$. 
 We also have
\[ \sigma^n[V\times \AAA^1 \xrightarrow{f\times\id_{\AAA}} X\times \AAA]=[ \Sym^n(V\times \AAA) \xrightarrow{\tilde{p}} \Sym^n(X)\times \AAA ] \] 
with $\tilde{p}$ being induced by the $S_n$-invariant morphism $p:(V\times \AAA)^n\to \Sym^n(X)\times \AAA$ with $p\big((v_1,z_1),\ldots,(v_n,z_n)\big)=\big((f(v_1),\ldots,f(v_n)),z_1+\ldots + z_n\big)$. We define 
\[ (V^n\times \AA^n)^0:=\{(v_1,\ldots,v_n,z_1,\ldots,z_n) \in V^n\times \AA^n \mid z_1 + \ldots z_n=0 \} \]
for all $n>0$. There is a $S_n$-equivariant isomorphism $\psi:(V^n\times \AA^n)^0\times \AAA \longrightarrow (V \times \AAA)^n$ sending $((v_1,\ldots,v_n,z_1,\ldots,z_n),z)$ to $\big((v_1,z_1+z/n),\ldots,(v_n, z_n + z/n)\big)$. Then, $p\circ \psi=q\times \id_\AAA: (V^n\times \AA^n)^0\times \AAA\longrightarrow \Sym^n(X)\times \AAA$ for the $S_n$-invariant morphism $q:(V^n\times \AA^n)^0\twoheadrightarrow (V^n\times \AA^n)^0/\!\!/S_n\xrightarrow{\tilde{q}} \Sym^n(X)$ with $q(v_1,\ldots,v_n,z_1,\ldots,z_n)=(f(v_1),\ldots,f(v_n))$. Modding out the $S_n$-action, we see that 
\[ [ \Sym^n(V\times \AAA) \xrightarrow{\tilde{p}} \Sym^n(X)\times \AAA ]=\pr^\ast_{\Sym^n}([(V^n\times \AAA^n)^0/\!\!/S_n \xrightarrow{\tilde{q}} \Sym^n(X)])\] 
is indeed in $I^{gm}_{\Sym^n(X)}$.
\end{proof}
\begin{exercise} \label{generators} Convince yourself using the formula $\sigma^n(a+b)=\sum_{l=0}^n\pi^{(l)}_!(\sigma^l(a)\boxtimes \sigma^{n-l}(b))$ for the motivic theory $R(-\times \AAA)$ with $\pi^{(l)}:\Sym^l(X)\times \Sym^{n-1}(X)\longrightarrow \Sym^n(X)$ being the natural map, that $a\in I^{gm}_X$ implies $\sigma^n(a)\in I^{gm}_{\Sym^n(X)}$ is indeed true if it  already holds for generators $a=\pr^\ast_X([V\xrightarrow{f} X])$ of $I^{gm}_X$. 
\end{exercise}

Due to Lemma \ref{monodromy_extension}, we can form the quotient $R^{gm}_{mon}(X)=R^{gm}_{\GG_m}(X\times \AAA)/\pr_X^\ast R^{gm}(X)$ and obtain a new motivic theory together with a morphism $R^{gm}\to R^{gm}_{mon}$ of motivic theories (cf.\ Exercise \ref{geometric_part}) given by $R^{gm}(X) \xrightarrow{(\id\times 0)_!} R^{gm}_{\GG_m}(X\times \AAA) \twoheadrightarrow R^{gm}_{mon}(X)$.

\begin{exercise}
Fix a motivic theory $R$ and consider the map $R^{gm}_{\GG_m}(X\times \AAA) \longrightarrow R^{gm}(X)$ given by $a\longmapsto (\id\times 0)^\ast(a)-(\id\times 1)^\ast(a)$, where $0,1:\Spec\kk \to \AAA$ are the obvious maps. As $\pr_X^\ast R^{gm}(X)$ is in the kernel, we obtain a well-defined group homomorphism $R^{gm}_{mon}(X)\to R^{gm}(X)$. Note that the composition $R^{gm}(X) \rightarrow R^{gm}_{mon}(X) \rightarrow R^{gm}(X)$ is the identity. Hence, $R^{gm}(X)$ is a direct summand of $R^{gm}_{mon}(X)$. However, show that the retraction $R^{gm}_{mon}(X)\to R^{gm}(X)$ is not a morphism of motivic theories and $R^{gm}$ is not a direct summand of the motivic theory $R^{gm}_{mon}$.  
\end{exercise}
\begin{exercise}
Using the notation of the previous exercise, show that the kernel of $\Con^{gm}_{mon}(X) \longrightarrow \Con^{gm}(X)=\Con(X)$ is trivial, i.e.\ $\Con(X)=\Con^{gm}_{mon}(X)$. On the other hand, show that the kernel is nonzero for $X=\Spec\kk$ and $R=\underline{\Ka}_0(D^b(-,\QQ))$, $R=\underline{\Ka}_0(\MHM(-)_{mon})$ and $R=\underline{\Ka}_0(\Sch)$.  
\end{exercise}
If $R=R^{gm}$, we suppress the superscript ``gm'' from notation. This applies for instance to $\underline{\Ka}_0(\Sch)$ but also to $R^{gm}$ as $(R^{gm})^{gm}=R^{gm}$. 
\begin{exercise}
 Check that  $R^{gm}_{mon}=(R^{gm})^{gm}_{mon}=(R^{gm})_{mon}$ using our convention for the last equation. 
\end{exercise}
If $R^{gm}\subsetneq R$ is a proper subtheory, as for example for $\underline{\Ka}_0(D^b_{con}(-,\QQ))$ or for $\underline{\Ka}_0(\MHM(-)_{mon})$, we can nevertheless define a theory $R_{mon}$ under the assumption that the formula 
 \[ \sigma^n(a\boxtimes b) =\sum_{\lambda \dashv n} \pi^{(n)}_!\Big(P^\lambda(\sigma^1(a),\ldots,\sigma^n(a))\boxtimes  P^\lambda(\sigma^1(b),\ldots,\sigma^n(b))\Big) \]
 holds in $R(\Sym^n(X\times Y))$ for all $0\not=n\in \NN$, all $a\in R(X),b\in R(Y)$ and all $X,Y$, where the sum is taken over all partitions $\lambda=(\lambda_1\ge\ldots  \ge \lambda_n\ge 0)$ of $n$ and 
 \[ P^\lambda(x_1,\ldots,x_n)=\det(x_{\lambda_i}+j-i)_{1\le i,j\le n}=\left|\begin{array}{cccc} x_{\lambda_1} & x_{\lambda_1+1} & \ldots & x_{\lambda_{1}+n-1} \\ x_{\lambda_2-1} & x_{\lambda_2} & \ldots & x_{\lambda_2+n-2} \\ \vdots & \vdots & \ddots & \vdots \\ x_{\lambda_n-n+1} & x_{\lambda_n-n+2} & \ldots & x_{\lambda_n} \end{array} \right| \]
is the polynomial from the Jacobi-Trudi formula with the convention $x_0=1$ and $x_m=0$ for $m<0$ or $m>n$. Here, $\pi^{(n)}:\Sym^n(X)\times \Sym^n(Y)\longrightarrow \Sym^n(X\times Y)$ is the obvious map. The expression $P^\lambda(\sigma^1(a),\ldots,\sigma^n(a))$ is computed in $R(\Sym(X))$ with respect to the convolution product, and is an element of $R(\Sym^n(X))$ since $\lambda_1 +\ldots + \lambda_n=n$. Similarly for $P^\lambda(\sigma^1(b),\ldots,\sigma^n(b))$. It can be show that this formula holds\footnote{The formula is a direct consequence of the assumption that $R(\Sym(X\times Y))$ is a special $\lambda$-ring which is true for any ``decategorification''.}  whenever $R$ has a ``categorification'' as for example $\underline{\Ka}_0(D^b(\MHM_{mon}))$ or $\underline{\Ka}_0(D^b_{con}(-,\QQ))$. In this case, we may replace $R^{gm}_{\GG_m}(X\times \AAA)$ with the $R(X)$-submodule $R_{\GG_m}(X\times \AAA)$ of $R(X\times \AAA)$ generated by $R^{gm}_{\GG_m}(X\times \AAA)$. Note that $R(X\times \AAA)$ is an $R(X)$-module using the convolution product and the embedding $R(X)\hookrightarrow R(X\times \AAA)$ provided by the ``zero section'' $0_X=\id_X\times 0:X\hookrightarrow X\times \AAA$. The formula for $\sigma^n(a\boxtimes b)$ ensures that $R_{\GG_m}(X\times \AAA)$ is still closed under taking $\sigma^n:R(X\times \AAA)\longrightarrow R(\Sym^n(X)\times \AAA)$ and, thus, defines another motivic theory containing $R^{gm}_{\GG_m}(-\times \AAA)$ as a subtheory. Moreover, $\pr_X^\ast(R(X))=:I_X$ is a $\lambda$-ideal and the quotient $R_{mon}(X):=R_{\GG_m}(X\times \AAA)/I_X$ is a well-defined motivic theory which contains $R$ as a subtheory such that $R(X)\hookrightarrow R_{mon}(X)$ is a retract for every $X$. Moreover, the following diagram is cartesian
\[ \xymatrix { R^{gm} \ar@{^{(}->}[r] \ar@{^{(}->}[d] & R^{gm}_{mon} \ar@{^{(}->}[d] \\ R \ar@{^{(}->}[r] & R_{mon}.}\]
\begin{exercise} Show that $R^{gm}_{\GG_m}(X\times \AAA)$ is already an $R(X)$-module under the assumption $R=R^{gm}$. Also $I^{gm}_X=\pr^\ast_X (R(X))$ in this case. Hence, we do not get anything new by the previous construction whenever it applies, and putting $R_{mon}:=R^{gm}_{mon}$ for theories $R=R^{gm}$ will not cause any confusion.
\end{exercise}

\begin{example}  \rm
There is a morphism $\underline{\Ka}_0(\MHM_{mon}) \longrightarrow \underline{\Ka}_0(\MHM)_{mon}$ of motivic theories. Roughly speaking, $\MHM_{mon}(X)$ is obtained by a categorification of the construction just described. One can show that for a regular function $f:X\to \AAA$ on a smooth scheme $X$ the image of $\phi^{mhm}_f\in \underline{\Ka}_0(\MHM_{mon}(X))$ under this morphism is already contained in the subgroup $\underline{\Ka}_0(\MHM(X))^{gm}_{mon}$. A similar statement holds for $\underline{\Ka}_0(D^b_{con}(-,\QQ))$.
\end{example}

\begin{example} \rm 
There is a morphism $\Ka^\muu_0(\Sch_X)\to \underline{\Ka}_0(\Sch_X)_{mon}$ for every (connected) $X$ with $\Ka^\muu_0(\Sch_X)$ being defined in \cite{DenefLoeser2}. In a nutshell, $\Ka^\muu_0(\Sch_X)$ is constructed very similar to $\underline{\Ka}_0(\Sch_X)_{mon}$ by considering generators $[Y\xrightarrow{f} X\times \AAA,\rho]$ with $Y$ carrying a $\GG_m$-action $\rho:\GG_m\times Y \to Y$ such that $f$ is homogeneous of degree $d> 0$. In contrast to our previous definition, the $\GG_m$-action $\rho$ is part of the data, and $\Ka^\muu_0(\Sch_X)\to \underline{\Ka}_0(\Sch_X)_{mon}$ forgets the $\GG_m$-action $\rho$. In particular, given another homogeneous map $Y'\xrightarrow{f'} X\times \AAA$ with $Y'$ carrying a $\GG_m$-action $\rho'$ and an isomorphism $\theta:Y'\xrightarrow{\sim} Y$ such that $f\theta=f'$ then $[Y\xrightarrow{f}X\times\AAA]=[Y'\xrightarrow{f'} X\times \AAA]$ in $\underline{\Ka}_0(\Sch_{X})_{mon}$, but the generators $[Y\xrightarrow{f}X\times \AAA,\rho]$ and $[Y'\xrightarrow{f'}X\times \AAA,\rho']$ of $\Ka^\muu_0(\Sch_X)$ might be different unless $\theta$ is $\GG_m$-equivariant. The relations in $\Ka^\muu_0(\Sch_X)$ are the cut and paste relation for $\GG_m$-invariant closed subschemes $Z\subset Y$ and $[Y\times \AAA \xrightarrow{u\times \id_\AAA} X\times \AAA,\rho]=0$ for every $\GG_m$-invariant morphism $u:Y\to X$ from a scheme $Y$ with a good $\GG_m$-action. Here, $\rho$ is given by $g(y,z)=(gy,gz)$ using the $\GG_m$-action on $Y$. There is a third relation dealing with linear actions of $\mu_d\subset \GG_m$, the group of $d$-th roots of unity, which is also fulfilled in $\underline{\Ka}_0(\Sch_X)_{mon}$ as \'{e}tale locally trivial vector bundles are already Zariski locally trivial. 
\end{example}
\begin{exercise} \label{functoriality}
 Show that the construction $R\mapsto R^{gm}_{mon}$ is functorial in $R$.
\end{exercise}

The motivic theory $R^{gm}_{mon}$ will be the target of our vanishing cycle which we are going to construct now. For this let $f:X\to \AAA$ be a regular function on a smooth connected scheme and let $\altL_n(X)$ be the scheme parameterizing all arcs of length $n$ in $X$, i.e.\ the scheme representing the set-valued functor $Y\mapsto \Mor(Y\times \Spec \kk[z]/(z^{n+1}), X)$. The standard action of $\GG_m$ on $\AAA=\Spec \kk[z]$ given by $z\mapsto gz$ induces an action on $\Spec \kk[z]/(z^{n+1})$ and, hence, also on $\altL_n(X)$. By functoriality applied to $f:X\to \AAA$, we also get a $\GG_m$-equivariant morphism $\altL_n(X)\xrightarrow{\altL_n(f)} \altL_n(\AAA)\cong \AA^{n+1}$, where $\GG_m$ acts coordinatewise on the latter arc space with weights $0,1,\ldots,n$.  Fix $t\in\AAA(\CC)$ and consider the map $f_n=\pr_{n+1}\circ \altL_n(f):\altL_n(X)|_{X_t} \longrightarrow  \AAA$, a $\GG_m$-equivariant map of degree $n$, and the projection $\pi_n:\altL_n(X)\to X$ mapping an arc to its base point. By $\GG_m$-equivariance, $[\altL_n(X)|_{X_t} \xrightarrow{\pi_n\times f_n}X\times \AAA]$ is in $R^{gm}_{\GG_m}(X_t\times \AAA)$ and defines an element in $R^{gm}_{mon}(X_t)$. We form the generating series
\[ Z_{f,t}^R(T):=\sum_{n\ge 1} \LL_R^{-n\dim X} [\altL_n(X)|_{X_t} \xrightarrow{\pi_n\times f_n}X\times \AAA] \,T^n \mbox{ in }R^{gm}_{mon}(X_t)[[T]].\]
The following result is a consequence of  Thm.\ 3.3.1 in the article \cite{DenefLoeser2} of Denef and Loeser or   of  Thm.\ 5.4 in Looijenga's paper \cite{Looijenga1}.
\begin{theorem}
 The series $Z_{f,t}^R(T)$ is a Taylor expansion of a rational function in $T$. The latter has a regular value at $T=\infty$. 
\end{theorem}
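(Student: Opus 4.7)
The plan is to reduce the statement to the classical rationality theorem of Denef--Loeser by functoriality, and then to prove it in the universal theory $\underline{\Ka}_0(\Sch)$ via an explicit resolution-of-singularities formula.

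First I would invoke the initiality of $\underline{\Ka}_0(\Sch)$ among motivic theories (Lemma~\ref{initial_object}) together with the functoriality of $R\mapsto R^{gm}_{mon}$ (Exercise~\ref{functoriality}) to obtain a canonical morphism of motivic theories $\eta\colon \underline{\Ka}_0(\Sch)_{mon}\to R^{gm}_{mon}$. By inspection of the generators of $Z_{f,t}^R(T)$, each of its coefficients is the $\eta$-image of the corresponding coefficient of $Z_{f,t}^{\underline{\Ka}_0}(T)$, and $\eta$ sends $\LL$ to $\LL_R$. Extending $\eta$ coefficient-wise to power series, it maps rational functions whose denominators are products of factors $1-\LL^{a}T^{b}$ to expressions of the same form, and preserves regularity at $T=\infty$. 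Hence it suffices to prove the theorem for $R=\underline{\Ka}_0(\Sch)$.

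Next I would apply Hironaka to $f-t\colon X\to\AAA$, yielding a proper birational morphism $h\colon Y\to X$ from a smooth $Y$ such that $(f\circ h-t)$ is a strict normal crossings divisor with components $(E_i)_{i\in J}$, multiplicities $N_i$, and relative canonical multiplicities $\nu_i-1$. For each nonempty $I\subseteq J$ put $E_I^\circ=\bigl(\bigcap_{i\in I}E_i\bigr)\setminus\bigl(\bigcup_{j\notin I}E_j\bigr)$ and let $\tilde E_I^\circ\to E_I^\circ$ be the natural $\mu_{d_I}$-cover ($d_I=\gcd(N_i:i\in I)$), carrying the $\GG_m$-action that encodes its class in the monodromic theory. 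Then I would stratify $\altL_n(X)|_{X_t}$ by the tuple of contact orders of a lifted arc with the components $E_i$, apply a motivic change-of-variables on each stratum, and sum over $n\ge 1$ to obtain the Denef--Loeser formula
\[
Z_{f,t}^{\underline{\Ka}_0}(T)=\sum_{\emptyset\neq I\subseteq J}(\LL-1)^{|I|-1}[\tilde E_I^\circ]\prod_{i\in I}\frac{\LL^{-\nu_i}T^{N_i}}{1-\LL^{-\nu_i}T^{N_i}}.
\]
This is exactly the content of the cited results, and the right-hand side is a finite sum of rational functions of $T$, hence rational. Regularity at $T=\infty$ follows because each factor $\LL^{-\nu_i}T^{N_i}/(1-\LL^{-\nu_i}T^{N_i})$ tends to $-1$, giving
\[
Z_{f,t}^{\underline{\Ka}_0}(\infty)=\sum_{\emptyset\neq I\subseteq J}(-1)^{|I|}(\LL-1)^{|I|-1}[\tilde E_I^\circ].
\]

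The main obstacle is the motivic change-of-variables underlying the stratification step: one must verify that the locally closed stratum of $\altL_n(X)|_{X_t}$ with fixed contact multiplicities $(n_i)_{i\in I}$ is, up to the Jacobian weight $\LL^{-\sum_{i\in I}n_i(\nu_i-1)}$ and a $(\LL-1)^{|I|}$-torus factor from the free leading arc coefficients, a piecewise trivial affine-space bundle over $\tilde E_I^\circ$, and that the natural $\GG_m$-action on this stratum (coming from rescaling the arc parameter $z\mapsto gz$) matches the action used to represent $[\altL_n(X)|_{X_t}\to X\times\AAA]$ in $\underline{\Ka}_0(\Sch_{X_t})_{mon}$. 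This compatibility with the monodromic structure is the delicate point; once it is checked (and this is precisely what Denef--Loeser and Looijenga establish), the geometric series identity above collapses everything to a finite sum of explicit rational functions, from which both rationality and regularity at infinity are immediate.
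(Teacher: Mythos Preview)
Your proposal is correct and follows exactly the route the paper takes: the paper gives no proof of its own but simply states that the result is a consequence of Denef--Loeser's Thm.~3.3.1 (or Looijenga's Thm.~5.4), and you have spelled out both the functoriality reduction to the universal theory $\underline{\Ka}_0(\Sch)$ (which the paper leaves implicit, cf.\ Exercise~\ref{functoriality2}) and the content of the cited Denef--Loeser argument via resolution of singularities. Your identification of the motivic change-of-variables and its compatibility with the $\GG_m$-action as the technical core is accurate; this is precisely what the cited references establish.
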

\begin{definition}
 Using the same notation for the rational function, we define $\phi_{f,t}^R:=\unit_{X_t}+Z_{f,t}^R(\infty)\in R^{gm}_{mon}(X_t)$ and, finally, $\phi_f^R:=\sum_{t\in \AAA(\CC)} \iota_{t\,!}\phi_{f,t}^R \in R^{gm}_{mon}(X)$ to be the vanishing cycle of $f:X\to \AAA$. For non-connected $X$ we apply the definition to every connected component $X_i$ and define $\phi^R_f$ to be the family $(\phi^R_{f|_{X_i}})_{X_i\in \pi_{0}(X)}$ in $R^{gm}_{mon}(X)=\prod_{X_i\in \pi_0(X)}R^{gm}_{mon}(X_i)$.  
\end{definition}
\begin{example} \rm One can show $\phi^{\Con}_f=\phi^{con}_f$ for all  $f:X\to \AAA$ on smooth $X$.
\end{example}
\begin{example}\rm Let $f:X\to \AAA$ be a regular function on a smooth connected scheme $X$. Using the morphism $\Ka^{\muu}_0(\Sch_X)\to \underline{\Ka}_0(\Sch_X)_{mon}$, the vanishing cycle $\phi^{mot}_f$ constructed by Denef and Loeser maps to $\phi^{\underline{\Ka}_0(\Sch)}_f$ up to the normalization factor $(-1)^{\dim X}$, and we will keep the shorter notation $\phi^{mot}_f$ for $\phi^{\underline{\Ka}_0(\Sch)}_f$.
\end{example}
\begin{example} \rm Let $f$ be a regular function on a smooth scheme as before. Using the map $\underline{\Ka}_0(\MHM(X)_{mon})\longrightarrow \underline{\Ka}_0(\MHM(X))_{mon}$ discussed earlier, the vanishing cycle $\phi^{mhm}_f$ maps to $\phi^{\underline{\Ka}_0(\MHM)}_f$, and we will keep the shorter notation $\phi^{mhm}_f$. Similarly for $\phi^{perv}_f$.
\end{example}
\begin{exercise} \label{functoriality2} Prove that $\phi^R_f$ is functorial in $R$. In particular, the diagram 
 \[ \xymatrix{ \underline{\Ka}_0(\Sch_X) \ar@{=}[r] \ar[d]^{\phi^{mot}} & \underline{\Ka}_0(\Sch_X) \ar[d]^{\phi^R_f} \\ \underline{\Ka}_0(\Sch_X)_{mon} \ar[r] & R^{gm}_{mon}(X) } \]
 commutes where we used Lemma \ref{vanishing_cycle_morphism}, Lemma \ref{initial_object} and Exercise \ref{functoriality} to construct the corresponding morphisms. Conclude that $\phi^R_f$ is a vanishing cycle using the known fact that $\phi^{mot}$ is a vanishing cycle.
\end{exercise}

In order to compute the vanishing cycle in practice, we choose an embedded resolution of $X_t\subset X$, i.e.\ a smooth variety $Y$ together with a proper morphism $\pi:Y \rightarrow X$ such that $Y_t=(f\circ\pi)^{-1}(a)=\pi^{-1}(X_t)$ is a normal crossing divisor and $\pi: Y\!\setminus\! Y_t \xrightarrow{\sim} X\!\setminus\! X_t$. Denote the irreducible components of $Y_t$ by $E_i$ with $i\in J$ and let $m_i>0$ be the multiplicity of $f\circ \pi$ at $E_i$. Since $f\circ \pi$ is a section in $\mathcal{O}_Y(-\sum_{i\in J}m_iE_i)$, it induces a regular map to $\AAA$ from the total space of $\mathcal{O}_Y(\sum_{i\in I} m_iE_i)$ for any $\emptyset \neq I \subset J$. The latter space restricted to $E_I^\circ:=\cap_{i\in I}E_i \!\setminus\! \cup_{i\not\in I} E_i$ is just $\otimes_{i\in I} N_{E_i|Y}^{\otimes m_i}|_{E_I^\circ}$. By composition with the tensor product we get a regular map $f_I:N_I:=\prod_{i\in I}( N_{E_i|Y}\!\setminus\! E_i) |_{E_I^\circ} \longrightarrow  \AAA$ which is obviously homogeneous of degree $m_i$ with respect to the $\GG_m$-action on the factor $(N_{E_i|Y}\!\setminus\! E_i)|_{E_I^\circ}$ and homogeneous of degree $m_I:=\sum_{i\in I}m_i$ with respect to the diagonal $\GG_m$-action.  By composing with $\pi:Y \rightarrow X$, the projection $N_I \rightarrow E_I^\circ$  induces a  map $\pi_I:N_I \rightarrow X_t$.

\begin{theorem}[\cite{DenefLoeser2} or \cite{Looijenga1}] \label{resolution}
Let $f:X\rightarrow \AA_k^1$ be a regular map and $\pi:Y \rightarrow X$ be an embedded resolution of $X_t$. In the notation just explained we have\footnote{This formula seems to differs from the one given in \cite{DenefLoeser2} or \cite{Looijenga1} by a sign. However, this is not true as the authors work with schemes  over $X$ with good $\mu_d$-action. Given such a scheme $Y\xrightarrow{u} X$ with $\mu_d$-invariant $u$, the associated generator of $R^{gm}_{mon}(X)$ is $-[Y\times_{\mu_d} \GG_m \ni (y,z)\mapsto (u(y),z^d)\in X\times \AAA]_R$. The sign here is chosen in such a way that if $Y$ carries a trivial $\mu_d$-action, then the generator is equivalent to $[Y\ni y \mapsto (u(y),0)\in X\times \AAA]$ in $R^{gm}(X)\subset R^{gm}_{mon}(X)$.}  
\[ Z_{f,t}^R(\infty) \;=  \sum_{\emptyset \neq I\subset J} (-1)^{|I|-1}[N_I\xrightarrow{\pi_I\times f_I} X_t\times \AAA] \,\in\, R^{gm}_{mon}(X_t). \]
\end{theorem}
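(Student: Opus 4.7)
The plan is to reduce to the universal case and then carry out an explicit arc space computation via the resolution. By Exercise \ref{functoriality2}, $\phi^R_f$ is functorial in $R$, and all the classes appearing on the right-hand side are of the form $[-\to X_t\times\AAA]_R$, i.e.\ images of classes in $\underline{\Ka}_0(\Sch_{X_t\times\AAA})$ under the structural morphism $\underline{\Ka}_0(\Sch)\to R$. Thus it suffices to prove the equality in $\underline{\Ka}_0(\Sch_{X_t})_{mon}$, i.e.\ for the motivic vanishing cycle $\phi^{mot}_f$. From now on $R=\underline{\Ka}_0(\Sch)$.

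Next, I would use the proper birational map $\pi\colon Y\to X$ to pull back the generating series $Z^R_{f,t}(T)$. The fiber $\altL_n(X)|_{X_t}$ is related to $\altL_n(Y)|_{Y_t}$ via $\altL_n(\pi)$, and the key tool is the motivic change of variables formula (Kontsevich; see \cite{DenefLoeser2}, Lemma 3.4): for $\gamma\in\altL_n(Y)$ with $\operatorname{ord}_t(\pi^\ast\Omega_X/\Omega_Y)(\gamma)=e$, the fiber of $\altL_n(\pi)$ through $\gamma$ has class $\LL^e$, so that
\[ \LL^{-n\dim X}[\altL_n(X)|_{X_t}\to X\times\AAA] = \LL^{-n\dim Y}\pi_!\bigl(\LL^{-\operatorname{ord}_t(\operatorname{Jac}\pi)}\cdot[\altL_n(Y)|_{Y_t}\to Y\times\AAA]\bigr)\]
after the usual cut-offs, where $\pi_!$ is push-forward along $\pi\times\id_\AAA$. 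This reduces the computation to a sum over the locally closed strata $E_I^\circ$ of the normal-crossing divisor $Y_t=\bigcup_{i\in J}E_i$.

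Then I would fix $y\in E_I^\circ$, pick local coordinates $(z_1,\ldots,z_d)$ on $Y$ with $z_i=0$ defining $E_i$ for $i\in I$, and write $f\circ\pi=u\prod_{i\in I}z_i^{m_i}$ with $u$ a unit. An $n$-jet $\gamma$ at $y$ is a tuple $(z_i(t))_i$; stratify $\altL_n(Y)|_{E_I^\circ}$ by the orders $\bigl(\operatorname{ord}_t z_i(\gamma)\bigr)_{i\in I}$. For a fixed tuple $(a_i)_{i\in I}$ with $a_i\ge 1$, the corresponding stratum fibers over $E_I^\circ$ with fiber a product of (affine) pieces of class $\LL^{n-a_i}\cdot(\LL-1)$ in each coordinate $i\in I$ (and $\LL^{n+1}$ in the remaining coordinates), and the discrepancy $\operatorname{ord}_t(\operatorname{Jac}\pi)$ equals $\sum_{i\in I}(m_i-1)a_i$ when $z_i$ has multiplicity $m_i$ in $f\circ\pi$. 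A direct but tedious combinatorial computation (geometric series in each $a_i$, summed over all multi-indices) yields
\[ \sum_{n\ge 1}\LL^{-n\dim X}[\cdots]T^n = \sum_{\emptyset\neq I\subset J}(\LL-1)^{|I|}[E_I^\circ]\prod_{i\in I}\frac{\LL^{-m_i}T^{m_i}}{1-\LL^{-m_i}T^{m_i}}, \]
where the class on $E_I^\circ$ is enriched by the $\GG_m$-action coming from the jet. After cleaning the bookkeeping this is precisely $[N_I\xrightarrow{\pi_I\times f_I}X_t\times\AAA]$ times the appropriate rational factor tending to $(-1)^{|I|}$ as $T\to\infty$; adding back $\unit_{X_t}$ in the definition of $\phi_{f,t}$ cancels the $I=\emptyset$ contribution with the alternating sum and yields the stated formula.

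The main obstacle will be the bookkeeping in the third paragraph: correctly tracking the $\GG_m$-weights to ensure the result is computed in $\underline{\Ka}_0(\Sch_{X_t})_{mon}$ (not merely modulo monodromy), and verifying that the elementary limit $\lim_{T\to\infty}\prod_{i\in I}\frac{\LL^{-m_i}T^{m_i}}{1-\LL^{-m_i}T^{m_i}}=(-1)^{|I|}$ combines with the $(\LL-1)^{|I|}$ factors and the trivialization of the normal bundle torsor $N_I\to E_I^\circ$ to give exactly $(-1)^{|I|-1}[N_I\to X_t\times\AAA]$. Once this bookkeeping is done, functoriality in $R$ delivers the theorem for all motivic theories simultaneously.
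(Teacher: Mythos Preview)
The paper does not actually prove this theorem: it is stated with a citation to \cite{DenefLoeser2} and \cite{Looijenga1} and then used as a black box, so there is no ``paper's own proof'' to compare against. Your sketch is essentially the standard Denef--Loeser argument that those references carry out: pull back the arc space via the resolution using the motivic change of variables formula, stratify $\altL_n(Y)|_{Y_t}$ by the contact orders $(a_i)_{i\in I}$ along the components of the normal crossing divisor, sum the resulting geometric series, and evaluate the rational function at $T=\infty$.

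Your outline is correct in spirit, and the obstacle you flag is exactly the right one. One point to be careful about: the change of variables formula as you have written it requires passing to a pro-limit of arc spaces (or at least cutting off at $n$ large relative to the contact orders), since for small $n$ the map $\altL_n(\pi)$ need not be surjective onto $\altL_n(X)|_{X_t}$ and the fiber class is only $\LL^e$ on the locus where the jet lifts. The standard treatment handles this by working in the completed Grothendieck ring and showing that the truncation error vanishes in the limit defining the zeta function; you should make sure this is accounted for, since your displayed equality at level $n$ is only correct after such a stabilization. Apart from that, the combinatorics you describe, including the limit $\prod_{i\in I}\frac{\LL^{-m_i}T^{m_i}}{1-\LL^{-m_i}T^{m_i}}\to(-1)^{|I|}$, is exactly how the cited proofs conclude.
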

\begin{corollary}[support property]\label{support_property}
Given a regular function $f:X\to \AAA$ on a smooth scheme $X$, the vanishing cycle $\phi^R_f$ is supported on $\Crit(f)$, i.e.\ $\phi^R_f$ is in the image of the embedding $ R(\Crit(f))^{gm}_{mon}\hookrightarrow R(X)^{gm}_{mon}$, where $\Crit(f)=\{df=0\}\subset X$ is the critical locus of $f$.  
\end{corollary}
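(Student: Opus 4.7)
The plan is to reduce the support property to a local vanishing statement for functions without critical points, and then read it off from the resolution formula of Theorem~\ref{resolution}.

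First, $\phi^R_f$ being supported on $\Crit(f)$ is equivalent to $j^*\phi^R_f = 0$, where $j:U\hookrightarrow X$ is the open inclusion of $U = X\setminus \Crit(f)$: the motivic property (v) in Proposition~\ref{motivic_theories} gives $\phi^R_f = i_!i^*\phi^R_f + j_!j^*\phi^R_f$, so vanishing of $j^*\phi^R_f$ exhibits $\phi^R_f$ as a push-forward from $R^{gm}_{mon}(\Crit(f))$. Since $j$ is an open immersion and hence smooth, property~(1) of a vanishing cycle gives $j^*\phi^R_f = \phi^R_{f|_U}$. Thus, it suffices to prove $\phi^R_g = 0$ for every regular $g:U\to\AAA$ without critical points.

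Next I would apply Theorem~\ref{resolution} to such a $g$ with the trivial embedded resolution $\pi = \id_U$. For each $t\in\AAA(\CC)$, the fiber $U_t = g^{-1}(t)$ is a smooth divisor (a single component $E = U_t$ with multiplicity $m=1$) because $dg \neq 0$. Moreover $g-t$ is a nowhere-vanishing section of $\mathcal{O}(-U_t)$, which trivializes the conormal and therefore the normal bundle, giving $N_{U_t|U} \cong U_t\times\AAA$ under which $g_I$ corresponds to the second projection (of weight one). The theorem then yields
\[ Z^R_{g,t}(\infty) \;=\; [U_t\times\GG_m \xrightarrow{\pr_1\times\pr_2} U_t\times\AAA] \quad \text{in } R^{gm}_{mon}(U_t). \]

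The final step is to compute this class modulo $\pr_{U_t}^* R^{gm}(U_t)$. Applying the cut-and-paste relation to the closed subscheme $U_t\times\{0\}\subset U_t\times\AAA$ yields
\[ \pr_{U_t}^*\unit_{U_t} \;=\; [U_t\times\{0\}\hookrightarrow U_t\times\AAA] + [U_t\times\GG_m\hookrightarrow U_t\times\AAA], \]
whose left-hand side vanishes in $R^{gm}_{mon}(U_t)$. The summand $[U_t\times\{0\}\hookrightarrow U_t\times\AAA] = (\id_{U_t}\times 0)_!\unit_{U_t}$ is, by the very construction of the morphism $R^{gm}(U_t)\to R^{gm}_{mon}(U_t)$, the image of $\unit_{U_t}$. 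Hence $Z^R_{g,t}(\infty) \equiv -\unit_{U_t}$ in $R^{gm}_{mon}(U_t)$, so that $\phi^R_{g,t} = \unit_{U_t} + Z^R_{g,t}(\infty) = 0$; summing over $t\in\AAA(\CC)$ gives $\phi^R_g = 0$, as desired. The only delicate point is this last identification in $R^{gm}_{mon}(U_t)$, which hinges on carefully unwinding the definition of the monodromy theory and of the embedding $R^{gm}\hookrightarrow R^{gm}_{mon}$; the rest of the argument is a direct use of the axioms for a vanishing cycle together with Theorem~\ref{resolution}.
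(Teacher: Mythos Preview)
Your proof is correct and is precisely the argument the paper has in mind: the corollary is stated without proof immediately after Theorem~\ref{resolution}, and your reduction to a smooth fiber followed by the explicit computation of the single-component term is the intended derivation. Note that your ``delicate'' final identification is exactly the content of the footnote attached to Theorem~\ref{resolution}: for a trivial $\mu_d$-action (here $d=m_1=1$), the class $-[U_t\times\GG_m\to U_t\times\AAA]$ agrees with the image of $\unit_{U_t}$ under the embedding $R^{gm}(U_t)\hookrightarrow R^{gm}_{mon}(U_t)$, which is just your cut-and-paste computation.
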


The following result might also be helpful when it comes to actual computations. Let $\int_X a \in R(\Spec\kk)$ be the short notation for $(X\to \Spec \kk)_!(a)$ for $a\in R(X)$. 
\begin{theorem}[\cite{DaMe1}] \label{equivariant_case}
 Let $X$ be a smooth variety with $\GG_m$-action such that every point $x\in X$ has a neighborhood $U\subset X$ isomorphic to $\AA^{n(x)}\times U^{\GG_m}$ with $\GG_m$ acting by multiplication (with weight one) on $\AA^{n(x)}$. Let $f:X\to \AAA$ be a homogeneous function of degree $d>  0$. Then, $\int_X \phi^R_f=[X\xrightarrow{f}\AAA]$ in $R^{gm}_{mon}(\Spec\kk)$.    
\end{theorem}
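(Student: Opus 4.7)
The plan is to combine the support property (Corollary \ref{support_property}) with the resolution formula of Theorem \ref{resolution}, applied to a $\GG_m$-equivariant embedded resolution of $X_0:=f^{-1}(0)$. Since $f$ is homogeneous of degree $d>0$, Euler's identity gives $\Crit(f)\subseteq X_0$ (at a critical point, $d\cdot f=\xi(f)=df(\xi)=0$, where $\xi$ is the Euler vector field), so $\phi^R_f$ is supported on $X_0$ and $\int_X\phi^R_f=(X_0\to\Spec\kk)_!\,\phi^R_{f,0}$.

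I would then choose a $\GG_m$-equivariant embedded resolution $\pi:Y\to X$ of $X_0\subseteq X$, which exists because $X_0$ is $\GG_m$-invariant and $\GG_m$-invariant smooth centres can be blown up equivariantly. Write $Y_0=\cup_{i\in J}E_i$ as a simple normal crossing divisor with multiplicities $m_i>0$. Each stratum $E_I^\circ$, its normal datum $N_I$, and the leading-term map $f_I:N_I\to\AAA$ inherit a $\GG_m$-action from the one on $X$, with $f_I$ homogeneous of positive degree $m_I=\sum_{i\in I}m_i$. Using the formula $\phi^R_{f,0}=\unit_{X_0}+Z^R_{f,0}(\infty)$ and Theorem \ref{resolution},
\[ \int_X\phi^R_f=[X_0\xrightarrow{0}\AAA]+\sum_{\emptyset\not=I\subseteq J}(-1)^{|I|-1}[N_I\xrightarrow{f_I}\AAA]\in R^{gm}_{mon}(\Spec\kk). \]
On the other hand, cut-and-paste in $R^{gm}_{\GG_m}(\AAA)$ together with the isomorphism $\pi|_{Y\setminus Y_0}:Y\setminus Y_0\xrightarrow{\sim}X\setminus X_0$ yields $[X\xrightarrow{f}\AAA]=[X_0\xrightarrow{0}\AAA]+[Y\setminus Y_0\xrightarrow{f\circ\pi}\AAA]$. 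The theorem therefore reduces to the $\GG_m$-equivariant identity
\[ [Y\setminus Y_0\xrightarrow{f\circ\pi}\AAA]=\sum_{\emptyset\not=I\subseteq J}(-1)^{|I|-1}[N_I\xrightarrow{f_I}\AAA]\quad\text{in }R^{gm}_{mon}(\Spec\kk). \]

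I would prove this last identity by inclusion-exclusion over a $\GG_m$-equivariant tubular-neighbourhood stratification of $Y\setminus Y_0$ by the strata $E_I^\circ$. The local product hypothesis on $X$, together with the equivariance of the blow-up centres, propagates to a compatible equivariant Zariski-local description of $Y$ near $E_I^\circ$ of the form $E_I^\circ\times\AA^{|I|}$ with coordinates of weights $(m_i)_{i\in I}$, in which $f\circ\pi$ reads as the monomial $\prod_{i\in I}u_i^{m_i}$ times a $\GG_m$-invariant unit pulled back from $E_I^\circ$, matching the description of $f_I$ on $N_I$. The main obstacle will be globalising this local identification, since $Y$ only inherits the $\GG_m$-equivariance and not the original local triviality of $X$ verbatim; I would handle this inductively on the number of blow-ups, using the blow-up axiom (property (2)) in the definition of a vanishing cycle to reduce to the case of a single smooth $\GG_m$-invariant centre, where both sides collapse to a direct computation on an equivariant vector bundle that can be verified by hand.
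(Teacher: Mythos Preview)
The paper does not give a proof of this statement; it is quoted from \cite{DaMe1}. Your reduction is sound: Euler's identity plus the support property force everything onto $X_0$, and the resolution formula of Theorem~\ref{resolution} together with cut--and--paste reduce the claim to the identity
\[
[Y\setminus Y_0\xrightarrow{f\circ\pi}\AAA]\;=\;\sum_{\emptyset\neq I\subseteq J}(-1)^{|I|-1}[N_I\xrightarrow{f_I}\AAA]
\quad\text{in }R^{gm}_{mon}(\Spec\kk).
\]
The gap is in your argument for this last identity. Punctured tubular neighbourhoods of the strata $E_I^\circ$ do \emph{not} cover $Y\setminus Y_0$ (points far from $Y_0$ lie in no tube), so there is no inclusion--exclusion of that shape. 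Your fallback, induction on the number of blow-ups via axiom~(2), does show that the statement $\int_X\phi^R_f=[X\xrightarrow{f}\AAA]$ is stable under blowing up a smooth invariant centre inside $X_0$; but this only reduces you to proving the theorem for the resolved pair $(Y,f\circ\pi)$, and $Y$ in general no longer satisfies the weight-one local product hypothesis. So you cannot invoke an inductive hypothesis of the same form, and nowhere in your argument is the hypothesis on $X$ actually consumed.

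What makes the identity go through is not a tubular inclusion--exclusion but the fact that a homogeneous map of positive degree is \emph{isotrivial} over $\GG_m$: for any such $h:Z\to\AAA$ one has a $\GG_m$-equivariant identification $Z\setminus h^{-1}(0)\cong h^{-1}(1)\times_{\mu_d}\GG_m$ over $\AAA$, so that $[Z\setminus h^{-1}(0)\xrightarrow{h}\AAA]$ in $R^{gm}_{mon}(\Spec\kk)$ is determined by the $\mu_d$-variety $h^{-1}(1)$. Applying this simultaneously to $g=f\circ\pi$ on $Y\setminus Y_0$ and to each $f_I$ on $N_I$ (these are homogeneous of degrees $d$ and $m_I$ respectively), both sides become statements about the fibres over $1$, where the comparison is an honest stratified computation in $\Ka_0(\Sch_\kk)$ using the SNC structure. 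Alternatively, one can bypass the resolution entirely and compute the arc-space zeta function $Z^R_{f,0}(T)$ directly from the local product form $U\cong\AA^{n(x)}\times U^{\GG_m}$, which is how the hypothesis is meant to be used; this is closer to the argument in \cite{DaMe1}.
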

\begin{exercise} \label{square_root} \hfill
\begin{enumerate} 
\item Prove that the scheme $\{(x,y)\in \AA^2\mid xy\not= 0\} \ni (x,y)\mapsto xy\in \AAA$ over $\AAA$ is isomorphic to the scheme $\GG_m\times \GG_m\ni (x,y) \to y \in \AAA$ over $\AAA$ and conclude $[ \{xy\not=0 \} \xrightarrow{xy} \AAA]_R=-[\GG_m]_R=1-\LL_R$ in $R^{gm}_{mon}(\Spec\CC)$.
\item  Use this and any of the two previous theorems to show $\phi^R_{f,0}=\int_{\AA^2}\phi^R_f =\LL_R$ for the function $f:\AA^2\ni (x,y)\mapsto xy\in \AAA$. Note that $\Crit(f)\cong\Spec\CC$ is the origin $0\in \AA^2$. Hence, $\phi^R_f$ is located at the origin. 
\item Use the result of the second part and the product formula for vanishing cycles to prove that $\phi_{z^2}^R$ for the function $\AAA\ni z\mapsto z^2\in \AAA$ is a square root $\LL^{1/2}_R$ of $\LL_R$.
\item Show that $\sigma^n(\LL^{1/2}_R)=0$ for all $n\ge 2$. Hint: It is a well-known fact that $\AA^n\ni (z_1,\ldots,z_n)\longmapsto \big(\sum_{i=1}^n z_i^k\big)_{k=1}^n \in \AA^n$ has a factorization $\AA^n \twoheadrightarrow \Sym^n(\AAA) \xrightarrow{\sim} \AA^n$ into the quotient map for the natural $S_n$-action on $\AA^n$ and an isomorphism. Remember that for every scheme $X$ the equation $[X\times \AAA \xrightarrow{\pr_{\AAA}} \AAA]=0$ holds in $R(\Spec\CC)_{mon}^{gm}$ by construction. 
\end{enumerate}
\end{exercise}

\subsection{Vanishing cycles for quotient stacks}

The theory of vanishing cycles for regular functions $f:X\to \AAA$ on smooth schemes generalizes straight forward to functions $\fst:\XXX\to \AAA$ on (disjoint unions of) smooth quotient stacks. Note that a quotient stack $X/G$ is called smooth if $X$ is smooth. A closed substack $\PPP\subset \XXX$ is given by $Y/G$ for a $G$-invariant closed subset $Y\subset X$. The blow-up of $\XXX$ in $\PPP$ is then simply given by the quotient stack $\Bl_\PPP \XXX=\Bl_Y X /G$ having exceptional divisor $\mathfrak{E}=E/G$. Given a quotient stack $X/G$ with smooth $X$ and a regular function $\fst:X/G\to \AAA$, we denote with $\Crit(\fst)$ the quotient stack $\Crit(\fst\rho)/G$ with $\rho:X\to X/G$. The generalization to disjoint unions of quotient stacks is at hand.
\begin{definition}
 Given a stacky motivic theory $R$, a stacky vanishing cycle (with values in $R$) is a rule  associating to every regular function $\fst:\XXX\to \AAA$ in a disjoint union of smooth quotient stacks an element $\phi_{\fst}\in R(\XXX)$ such that the following holds.
 \begin{enumerate}
  \item If $u:\PPP\to \XXX$ is a smooth, then $\phi_{\fst\circ u}=u^\ast(\phi_{\fst})$.
  \item Let $\XXX$ be a disjoint union of smooth quotients containing a smooth closed substack $i:\PPP\hookrightarrow \XXX$. Denote by $j:\mathfrak{E}\hookrightarrow \Bl_{\PPP} \XXX$ the exceptional divisor of the blow-up $\pi:\Bl_\PPP \XXX \to \XXX$ of $\XXX$ in $\PPP$. Then the  formula  \[ \pi_!\big( \phi_{\fst\circ \pi} - j_! \phi_{\fst\circ \pi\circ j}\big) =  \phi_{\fst} - i_!\phi_{\fst\circ i}\]     holds for every $\fst:\XXX\to \AAA$.
  \item Given two morphisms $\fst:\XXX\to \AAA$ and $\mathfrak{g}:\YYY\to \AAA$ with smooth $\XXX$ and $\YYY$, we introduce the notation $\fst\boxtimes \mathfrak{g}:\XXX\times \PPP\xrightarrow{\fst\times \mathfrak{g}} \AAA\times \AAA\xrightarrow{+} \AAA$. Then $\phi_{\fst\boxtimes \mathfrak{g}}=\phi_{\fst}\boxtimes \phi_{\mathfrak{g}}$ in $R(\XXX\times \PPP)$. Moreover, $\phi_{\Spec\kk\xrightarrow{0}\AAA}(1)=1$.
 \end{enumerate}
\end{definition}
Recall, that we constructed a correspondence between motivic theories $R$ with $\LL_R^{-1},(\LL^n_R-1)^{-1}\in R(\Spec\kk)$ for all $0\not= n\in \NN$ and stacky motivic theories satisfying $\rho^\ast:R(X/G)\xrightarrow{\sim} R(X)^G$ for every special group $G$. 
\begin{lemma} \label{stacky_vanishing_cycles}
 Let $R$ be a motivic theory such that $\LL^{-1}_R, (\LL^n_R-1)^{-1}\in R(\Spec\kk)$ for all $0\not= n\in \NN$. The restriction to schemes provides a bijection between stacky vanishing cycles with values in $R^{st}$ and vanishing cycles with values in $R$. 
\end{lemma}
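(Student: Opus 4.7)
The plan is to set up a mutual inverse pair of constructions, using the atlas of a quotient stack to force the definition in one direction, and pure restriction in the other.

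First I would treat the restriction map. Any scheme $X$ is the quotient stack $X/\{1\}$, and under our hypothesis $[\Gl(n)]_R$ is already invertible in $R(\Spec\kk)$ (since it factors as a product of $\LL_R^{n-i}-1$ times a power of $\LL_R$), so $R^{st}|_{\Sch_\kk}=R$. Restricting a stacky vanishing cycle to schemes therefore yields a vanishing cycle with values in $R$, and the three axioms are inherited verbatim.

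For the inverse, given a vanishing cycle $\phi$ with values in $R$ and a regular function $\fst:\XXX\to\AAA$ on a smooth quotient stack $\XXX=X/G$, let $\rho:X\to \XXX$ be the standard atlas and set $f:=\fst\circ\rho$. Since $\rho^\ast:R^{st}(\XXX)\xrightarrow{\sim}R^{st}(X)^G=R(X)^G$ is an isomorphism, it suffices to show that $\phi_f\in R(X)$ is $G$-invariant and then define $\phi_{\fst}:=(\rho^\ast)^{-1}(\phi_f)$. Invariance is immediate from axiom~(1) on schemes: the two smooth morphisms $m,\pr_X:X\times G\to X$ both factor through $\rho$, so $\fst\circ\rho\circ m=\fst\circ\rho\circ\pr_X$, and therefore
\[ m^\ast(\phi_f)=\phi_{\fst\circ\rho\circ m}=\phi_{\fst\circ\rho\circ\pr_X}=\pr_X^\ast(\phi_f). \]
Independence of the presentation $\XXX=X/G\cong Y/H$ follows from the same trick applied to the fiber product $X\times_\XXX Y$ used to identify $R^{st}(X)^G\cong R^{st}(Y)^H$ in the construction of $R^{st}$ on stacks; both sides pull back to the same element in $R(X\times_\XXX Y)$. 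Extension to disjoint unions of quotient stacks is componentwise by Property~\ref{motivic_theories}(i).

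Next I would verify the three stacky axioms. For axiom~(1), a smooth morphism $\altU:\PPP\to\XXX$ between smooth quotient stacks can be covered by a commuting square with smooth atlases $\rho_\PPP,\rho_\XXX$ and a smooth morphism $u$ on the scheme level (by Exercise~\ref{representable}); the scheme-level axiom together with $\rho_\XXX^\ast$ being injective gives the result. Axiom~(2) uses the fact that if $\PPP=Y/G\hookrightarrow X/G=\XXX$ is a closed immersion with $Y\subset X$ smooth and $G$-invariant, then $\Bl_\PPP\XXX=(\Bl_YX)/G$ and the exceptional divisor is $E/G$; pulling back along $\rho_\XXX$ reduces the stacky blow-up formula to its scheme counterpart applied to $f=\fst\circ\rho_\XXX$, and injectivity of $\rho_\XXX^\ast$ then yields the stacky identity. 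Axiom~(3) follows from compatibility of the product atlas $X\times Y \to X/G\times Y/H$ with the exterior product in $R^{st}$ and the scheme-level product formula; the normalization $\phi_{\Spec\kk\xrightarrow{0}\AAA}=1$ is immediate.

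Finally, the two maps are mutually inverse: starting with a stacky vanishing cycle, restricting to schemes, and re-extending gives back the original one because axiom~(1) for the original already forces $\rho^\ast(\phi_{\fst})=\phi_{\fst\circ\rho}$, which is exactly the defining formula of the extension. In the other direction, the extension clearly restricts to the given vanishing cycle on schemes (take $G=\{1\}$). The main obstacle I anticipate is the bookkeeping in the independence-of-presentation step and in axiom~(2), where one must be careful that the blow-up construction is $G$-equivariant and that the exceptional divisor is cut out correctly; once the atlas formalism is in place, however, everything reduces cleanly to the scheme-level axioms by the injectivity of $\rho^\ast$.
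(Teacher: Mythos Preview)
Your proposal is correct and follows essentially the same approach as the paper: both use the smooth pullback axiom along the atlas $\rho:X\to X/G$ to reduce everything to the scheme level, proving $G$-invariance of $\phi_{\fst\circ\rho}$ via the action/projection diagram and then defining $\phi_\fst$ as the preimage under the isomorphism $\rho^\ast:R^{st}(X/G)\xrightarrow{\sim}R(X)^G$. The paper's written proof is terser and delegates the independence-of-presentation check and the verification of the three stacky axioms to the exercise immediately following; your proposal simply spells these out, and your observation that the hypothesis forces $R^{st}|_{\Sch_\kk}=R$ (so that restriction really lands in $R$, not just $R^{st}$) is a point the paper leaves implicit.
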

\begin{proof}
By applying the first property of a stacky vanishing cycle to the smooth map $\rho:X\to X/G$, we see that $\phi_{\fst}$ is uniquely determined by $f=\fst\circ\rho:X\to \AAA$ as $\rho^\ast:R(X/G)\to  R(X)$ is injective for special $G$. 
Moreover, applying the first property once more to 
\[ \xymatrix { X\times G \ar[r]^m \ar[d]^{\pr_X} & X \\ X} \]
we observe that the vanishing cycle $\phi_f$ of a $G$-invariant function $f:X\to \AAA$ is $G$-invariant. Hence, given a vanishing cycle with values in $R$, we can define $\phi^{st}_{\fst}$ to be the unique element in $R^{st}(X/G)$ mapping to $\phi_f=\phi_{\fst\circ \rho}$ under the isomorphism $\rho^\ast:R^{st}(X/G)\cong R(X)^G$. Alternatively, we can write $\phi^{st}_\fst=\rho_!(\phi_f)/[G]_R$ since $[G]_R^{-1}\rho_!$ is the inverse of $\rho^\ast$ on $R(X)^G$. 
If $\fst:\XXX\to \AAA$ is a function on a disjoint union of quotient stacks $\XXX_i=X_i/G_i$, we may assume that $G_i$ is special for all $i$ (see Exercise \ref{special}) and define $\phi^{st}_\fst$ by means of the family $\phi^{st}_{\fst|_{\XXX_i}}$ using the first property of a stacky motivic theory.   
\end{proof}
\begin{exercise}
 Complete the proof of the previous lemma by checking that $\phi^{st}_\fst$ on a quotient stack $\XXX=X/G$ is independent of the choice of a presentation, i.e.\ if $X/G\cong Y/H$ for special groups $G$ and $H$, then $X$ is smooth if and only if $Y$ is smooth, and $\phi_{\fst\circ\rho_X}$ corresponds to $\phi_{\fst\circ\rho_Y}$ under the isomorphism $R(X)^G\cong R(Y)^H$ in this case. Moreover, show that $\phi^{st}$ satisfies the properties of a stacky vanishing cycle. 
\end{exercise}
\begin{example} \rm
 Given a motivic theory $R$ satisfying equation (\ref{eq7}) for all $a\in R(X), n\in \NN$ and a vanishing cycle $\phi$ with values in $R$, we can adjoin inverses of $\LL_R$ and $\LL^n_R-1$ for all $n>0$. By applying the previous lemma to the new motivic theory and the induced vanishing cycle, we get a motivic theory $R^{st}$ and a vanishing cycle $\phi^{st}$ such that $\eta_X(\phi_f)=\phi^{st}_f$ for every $f:X\to \AAA$, where $\eta_X:R(X)\to R^{st}(X)$ is the adjunction morphism $R\to R^{st}|_{\Sch_\kk}$ from the previous section. In particular, we can apply this to $\phi^{R^{gm}}_{can}$ with values in $R^{gm}$ and also to  $\phi^R$ with values in $R^{gm}_{mon}$ as equation (\ref{eq7}) holds in both cases (cf.\ Exercise \ref{geometric_part}). If $R$ satisfies equation (\ref{eq7}), we can also extend $\phi^R_{can}$ with values in $R$. Note that $\phi^{R,st}_{can, \fst} =\unit_{\XXX}\in R^{st}(\XXX)$ for all $\fst:\XXX\to \AAA$.
\end{example}

\section{Donaldson--Thomas theory}
 After introducing a lot of technical notation, we are now in the position to provide the definition of Donaldson--Thomas functions and to state a couple of results in Donaldson--Thomas theory. We close this section by given a list of examples. There are basically three approaches to define Donaldson--Thomas functions (see \cite{DavisonMeinhardt3}). The one given here is due to Kontsevich and Soibelman. 
 
 \subsection{Definition and main results}
 
 We start by fixing a stacky motivic theory $R$ satisfying $R(X/G)\cong R(X)^G$ for every quotient stack $X/G$ with special group $G$. Moreover, let $\phi$ be a stacky vanishing cycle with values in $R$ which is completely determined by its restriction to functions $f:X\to \AAA$ on smooth schemes $X$. (cf.\ Lemma \ref{stacky_vanishing_cycles}) As shown before, we could start with any vanishing cycle with values in a motivic theory and pass to the ``stackification''. Let us also assume that a square root $\LL_R^{1/2}$ of $\LL_R$ is contained in $R(\Spec\kk)$ such that $\sigma^n(\LL_R^{1/2})=0$ for all $n\ge 2$. 
 \begin{example}\rm
  Assume that $\sigma^n(a\LL_R)=\sigma^n(a)\LL^n_R$ holds for all $a\in R(X)$, all $n \in \NN$ and all $X$. As shown in \cite{DavisonMeinhardt3}, Appendix B, one  can extend the $\sigma^n$-operations to $R(X)[\LL^{1/2}_R]$ such that  $\sigma^n(-\LL_R^{1/2})=(-\LL_R^{1/2})^n$ for all $n\in \NN$ or equivalently $\sigma^n(\LL^{1/2}_R)=0$ for all $n\ge 2$. Thus, $R[\LL^{1/2}_R]$ is a new motivic theory having the required square root of $\LL_R$. We can apply the stackification to the canonical vanishing cycle $\phi^{R[\LL_R^{1/2}]}_{can}$ of $R[\LL_R^{1/2}]$ and obtain a pair $(R[\LL^{1/2}_R]^{st}, \phi_{can}^{R[\LL^{1/2}_R], st})$ satisfying our requirements. Note that the assumption on $R$ is always fulfilled if we replace $R$ with $R^{gm}$ (cf.\ Exercise \ref{geometric_part}).
 \end{example}
\begin{example} \rm
 For every motivic theory $R$ the element $[\AAA\ni z \mapsto z^2\in \AAA]_R\in R^{gm}_{mon}(\Spec \CC)$ is the required square root of $\LL_{R^{gm}_{mon}}$ as shown in Exercise \ref{square_root}. The stackification of $\phi^R$ with values in $R^{gm}_{mon}$ will match our requirements. This applies in particular to $\phi^{mot}$ and $\phi^{mhm}$. Note that the stackification of $\Con=\Con^{gm}_{mon}$ and of $\underline{\Ka}_0(D^b_{con}(-,\QQ))^{gm}_{mon}$ is zero as $[\Gl(n)]_R=0$ in both cases.  
\end{example}

We fix a quiver $Q$ with potential $W$ and a geometric stability condition $\zeta$. Recall that $\Mst^{\zeta-ss}$ was the stack of $\zeta$-semistable quiver representations with coarse moduli space $\Msp^{\zeta-ss}$ parameterizing polystable representations. Similarly $\Mst$ was the stack of all quiver representations and $\Msp^{ssimp}$ its coarse moduli space parameterizing semisimple representations. There are various maps between these spaces as shown in the following diagram
\[ \xymatrix @C=2cm { \Mst^{\zeta-ss} \ar@{^{(}->}[r] \ar[d]^{p^\zeta} & \Mst \ar[d]^p & \\ \Msp^{\zeta-ss} \ar[r]^{q^\zeta} & \Msp^{ssimp} \ar[r]^{\dim\times \WW} & \NN^{Q_0}\times \AAA.}\]
Note that the maps in the lower horizontal row are homomorphisms of monoids with respect to (direct) sums. Moreover, $q^\zeta$ is proper. Denote the composition $\Mst^{\zeta-ss} \hookrightarrow \Mst \xrightarrow{\WW\circ p} \AAA$ with $\WWW^\zeta$. For a fixed slope $\mu\in \mathbb{R}$, let us introduce the short hand $\phi_{\WWW^\zeta}(\ICS_{\Mst^{\zeta-ss}_\mu})$ for the object in $R(\Mst^{\zeta-ss})\cong \prod_{d\in \NN^{Q_0}} R(\Mst^{\zeta-ss}_d)$ having components 
\[ \LL_R^{(d,d)/2}\phi_{\WWW^\zeta|_{\Mst^{\zeta-ss}_d}}=\frac{\LL_R^{(d,d)/2}}{[G_d]_R}\rho_{d!}\phi_{\Tr(W)_d|_{X_d^{\zeta-ss}}} \]
if $d$ has slope $\mu$ or $d=0$ and $0$ for the remaining dimension vectors $d$. The idea behind the notation is the following. The vanishing cycle $\phi$ defines a map $\underline{\Ka}_0(\Sch_{\Mst^{\zeta-ss}})^{st} \longrightarrow R(\Mst^{\zeta-ss})$ mapping $\unit_{\Mst^{\zeta-ss}}=(\Mst^{\zeta-ss}\to \Spec\kk)^\ast(1)$ to $\phi_{\WWW^\zeta}$. If we define $\ICS_{\Mst^{\zeta-ss}_\mu}$ to be the object in $\underline{\Ka}_0(\Sch_{\Mst^{\zeta-ss}})[\LL^{1/2}]^{st}$ whose restriction to $\Mst^{\zeta-ss}_d$ is $\LL^{(d,d)/2}\unit_{\Mst^{\zeta-ss}_d}$ if $d$ has slope $\mu$ or $d=0$ and zero else, then $\phi_{\WWW^\zeta}(\ICS_{\Mst^{\zeta-ss}_\mu})$ is just the image of $\ICS_{\Mst^{\zeta-ss}_\mu}$ under the induced map $\underline{\Ka}_0(\Sch_{\Mst^{\zeta-ss}})[\LL^{1/2}]^{st} \longrightarrow R(\Mst^{\zeta-ss})$. One should think of $\ICS_{\Mst^{\zeta-ss}_\mu}$ as (the class of) the motivic intersection complex of $\Mst^{\zeta-ss}_\mu\subseteq \Mst^{\zeta-ss}$.

Let us define the convolution product on $R(\Msp^{\zeta-ss})$ by means of 
\[ R(\Msp^{\zeta-ss})\otimes R(\Msp^{\zeta-ss}) \xrightarrow{\boxtimes} R(\Msp^{\zeta-ss}\times \Msp^{\zeta-ss}) \xrightarrow{\oplus_!} R(\Msp^{\zeta-ss}) \]
and operations $\Sym^n:R(\Msp^{\zeta-ss}) \to R(\Msp^{\zeta-ss})$ for $n\in \NN$ via
\begin{equation}\label{lambda_operations} R(\Msp^{\zeta-ss})\xrightarrow{\sigma^n} R(\Sym^n \Msp^{\zeta-ss}) \xrightarrow{\oplus_!} R(\Msp^{\zeta-ss}). \end{equation}
\begin{lemma}
 For $a\in R(\Msp^{\zeta-ss})$ with $a_0:=a|_{\Msp^{\zeta-ss}_0}=0$ the infinite sum $\Sym(a):=\sum_{n\in \NN} \Sym^n(a)$ has only finitely many nonzero summands after restriction to $\Msp^{\zeta-ss}_d$ and, hence, defines a well defined element in $R(\Msp^{\zeta-ss})\cong\prod_{d\in \NN^{Q_0}} R(\Msp^{\zeta-ss}_d)$. Conversely, every element $b\in R(\Msp^{\zeta-ss})$ with $b_0=1\in R(\Msp^{\zeta-ss}_0)=R(\Spec\kk)$ can be written uniquely as $\Sym(a)$. The map $\Sym(-)$ is a group homomorphism form the additive group $\{a\in R(\Msp^{\zeta-ss})\mid a_0=0\}$ to the multiplicative group $\{b\in R(\Msp^{\zeta-ss})\mid b_0=1\}$. The same holds true if we replace $\Msp^{\zeta-ss}$ with $\Msp^{ssimp}$ or with $\NN^{Q_0}$. 
\end{lemma}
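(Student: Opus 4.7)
The plan is to exploit the $\NN^{Q_0}$-grading $\Msp^{\zeta-ss} = \bigsqcup_{d\in\NN^{Q_0}} \Msp^{\zeta-ss}_d$, under which the monoid map $\oplus$ is additive in dimension vectors, and to reduce everything to a fixed degree $d$. First I would establish the finiteness assertion. Let $j\colon \bigsqcup_{d\neq 0}\Msp^{\zeta-ss}_d \hookrightarrow \Msp^{\zeta-ss}$ be the open embedding away from the point $\Msp^{\zeta-ss}_0=\Spec\kk$. The hypothesis $a_0=0$ together with the motivic property gives $a = j_! j^{\ast} a$, and then Proposition~\ref{motivic_theories}(iii) yields $\sigma^n(a) = \Sym^n(j)_!\,\sigma^n(j^{\ast} a)$, so $\sigma^n(a)$ is supported on $\Sym^n$ of the ``positive'' part. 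Pushing forward under $\oplus\colon\Sym^n(\Msp^{\zeta-ss})\to\Msp^{\zeta-ss}$, the component of $\Sym^n(a)$ in degree $d$ is supported on unordered $n$-tuples $(d_1,\dots,d_n)$ of nonzero dimension vectors with $\sum d_i=d$. Since $|d_i|\geq 1$ forces $n\leq |d|:=\sum_{i\in Q_0} d_i$, only finitely many $n$ contribute in each degree, so $\Sym(a)$ is a well-defined element of $\prod_d R(\Msp^{\zeta-ss}_d)$.

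For the homomorphism property, I would apply Proposition~\ref{motivic_theories}(iv), which gives $\sigma^n(a+b)=\sum_{l=0}^n \sigma^l(a)\sigma^{n-l}(b)$ in $R(\Sym(\Msp^{\zeta-ss}))$, with convolution product taken along the concatenation $\oplus^{\mathrm{sym}}\colon \Sym^l\times\Sym^{n-l}\to\Sym^n$. Push-forward along $\oplus\colon\Sym^n(\Msp^{\zeta-ss})\to\Msp^{\zeta-ss}$ and the identity $\oplus\circ\oplus^{\mathrm{sym}}=\oplus_{\Msp}\circ(\oplus\times\oplus)$ (associativity/commutativity of $\oplus$), combined with compatibility of $_!$ with $\boxtimes$, then yield $\Sym^n(a+b)=\sum_{l=0}^n \Sym^l(a)\cdot\Sym^{n-l}(b)$ in the convolution ring $R(\Msp^{\zeta-ss})$. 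Summing over $n$ and reindexing gives $\Sym(a+b)=\Sym(a)\cdot\Sym(b)$, all sums being finite in each degree by the previous paragraph.

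For the bijection I would argue by induction on $|d|$. Observing that $\Sym^1(a)=a$ (since $\Sym^1(\Msp^{\zeta-ss})=\Msp^{\zeta-ss}$ and $\sigma^1=\id$) and that for $n\geq 2$ the restriction $\Sym^n(a)|_d$ depends only on the components $a|_{d'}$ with $|d'|<|d|$ (by the same nonzero-decomposition count as in the first paragraph), one has $\Sym(a)|_d=a|_d + P_d\bigl((a|_{d'})_{|d'|<|d|}\bigr)$ for every $d$ with $|d|\geq 1$, where $P_d$ is a fixed expression built from $\sigma^m$ and convolution. The equation $\Sym(a)=b$ with $b_0=1$ therefore becomes, at degree $d$, the explicit recursion $a|_d = b|_d - P_d\bigl((a|_{d'})_{|d'|<|d|}\bigr)$, which both uniquely determines and constructs $a|_d$ from the data already available; this produces a unique preimage $a$ with $a_0=0$. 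Replacing $\Msp^{\zeta-ss}$ with $\Msp^{ssimp}$ or $\NN^{Q_0}$ requires no change, as the argument used only the $\NN^{Q_0}$-grading and the additive monoid structure.

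The main obstacle is the bookkeeping in the second paragraph: one must carefully distinguish and then identify the convolution product on $R(\Sym(\Msp^{\zeta-ss}))$ (via $\oplus^{\mathrm{sym}}$) with the convolution product on $R(\Msp^{\zeta-ss})$ (via $\oplus$) after push-forward. This rests on base change and the monoidal coherence of $\oplus$, but is essentially formal. The conceptual heart is the grading argument of the first paragraph, which delivers both the convergence of $\Sym(a)$ and the triangular structure making the recursion for $a$ solvable.
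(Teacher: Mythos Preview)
Your proposal is correct and follows essentially the same approach as the paper's proof: both use the $\NN^{Q_0}$-grading to establish finiteness via $n\le |d|$, invoke Proposition~\ref{motivic_theories}(iv) for the homomorphism property, and solve for $a$ by a degree-wise recursion. The only cosmetic differences are that the paper inducts over the partial order $e<d$ on $\NN^{Q_0}$ rather than on $|d|$, and writes the recursion explicitly as $a_d=b_d-\sum_{n_1e_1+\ldots+n_re_r=d}\prod_j\Sym^{n_j}(a_{e_j})$ (summing over distinct nonzero $e_j$), whereas you package the lower-order terms into an abstract expression $P_d$; your version supplies more detail on the support argument and the compatibility of the two convolution products, which the paper leaves implicit.
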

\begin{proof} Fix $d\not= 0$. Since $\oplus$ maps $\Sym^n \Msp^{\zeta-ss}_e$ to $\Msp^{\zeta-ss}_{ne}$, we get $\Sym^n(a)|_{\Msp^{\zeta-ss}_d}=0$ for all $n>|d|=\sum_{i\in Q_0}d_i$, and the infinite sum is finite after restriction to $\Msp^{\zeta-ss}_d$. Conversely, given $b$ we set $a_0=0$. Suppose $a_e\in R(\Msp^{\zeta-ss}_e)$ has been constructed for all 
dimension vectors\footnote{We write $e<d$ if $d=e+e'$ with $0\not=e'\in \NN^{Q_0}$.} $e<d$. We put 
\[ a_d:=b_d- \sum_{{n_1e_1+\ldots n_re_r=d\atop 0\not=n_j\in \NN, 0\not=e_i\not=e_j\in \NN^{Q_0}}} \prod_{j=1}^r \Sym^{n_j}(a_{e_j}).\]
Then $b=\Sym(a)$ for $a\in R(\Msp^{\zeta-ss})$ with $a|_{\Msp^{\zeta-ss}_d}=a_d$. Using the properties of $\sigma^n:R(\Msp^{\zeta-ss})\longrightarrow R(\Sym^n\Msp^{\zeta-ss})$ we get $\Sym(0)=1$ and $\Sym(a+b)=\Sym(a)\Sym(b)$. In particular, $\{b\in R(\Msp^{\zeta-ss})\mid b_0=1\}\cong\{a\in R(\Msp^{\zeta-ss})\mid a_0=0\}$ as groups. The  proof for $\Msp^{ssimp}$ and $\NN^{Q_0}$ is similar.
\end{proof}
\begin{exercise} \label{master_equation}
Let $\iota:\Msp^{\zeta-st}\hookrightarrow \Msp^{\zeta-ss}$ be the inclusion of the moduli space of $\zeta$-stable representations. Show that $\unit_{\Msp^{\zeta-ss}}=\Sym\big(\iota_!\unit_{\Msp^{\zeta-st}}\big)$ holds in $\underline{\Ka}_0(\Sch_{\Msp^{\zeta-ss}})$. Hint: Proof that the strata of the Luna stratification of $\Msp^{\zeta-ss}$ and the strata of the natural stratification of $\sqcup_{{n_1, \ldots, n_r\atop  d_i\not=d_j \forall i\not= j}}\prod_{i=1}^r(\Msp_{d_i}^{\zeta-ss})^{n_i}/\!\!/S_{n_i}$ given by the conjugacy types of the $S_{n_i}$-stabilizers coincide.  In other words, the canonical map $\oplus:\Sym(\Msp^{\zeta-st}) \longrightarrow \Msp^{\zeta-ss}$ is a ``constructible'' isomorphism. 
\end{exercise}
\begin{exercise}
 Show  that the restriction of $p^\zeta_!\big(\phi_{\WWW^\zeta}(\ICS_{\Mst^{\zeta-ss}_\mu})\big)$ to $\Msp^{\zeta-ss}_0$ is $1$.
\end{exercise}
Using the last exercise and the previous lemma, the following definition makes sense.
\begin{definition}
 The Donaldson--Thomas function $\DTS(Q,W)^\zeta\in R(\Msp^{\zeta-ss})$ is the unique element with $\DTS(Q,W)^\zeta|_{\Msp^{\zeta-ss}_0}=0$ such that $\DTS(Q,W)^\zeta_\mu:=\DTS(Q,W)|_{\Msp^{\zeta-ss}_\mu}$ solves the equation
 \[ p^\zeta_!\big(\phi_{\WWW^\zeta}(\ICS_{\Mst^{\zeta-ss}_\mu})\big)=\Sym\left( \frac{\DTS(Q,W)^\zeta_\mu}{\LL^{1/2}_R-\LL^{-1/2}_R} \right) \]
 in $R(\Msp^{\zeta-ss})$ for all $\mu\in (-\infty,+\infty]$. We also use the notation $\DTS(Q,W)^\zeta_d:=\DTS(Q,W)^\zeta|_{\Msp^{\zeta-ss}_d}$. The element $\int_{\Msp^{\zeta-ss}_d} \DTS(Q,W)^\zeta_d=:\DT(Q,W)^\zeta_d\in R(\Spec \kk)$ is called the Donaldson--Thomas invariant of $(Q,W)$ with respect to $\zeta$ for dimension vector $d$. If $W=0$, we simply write $\DTS(Q)^\zeta_d$ and $\DT(Q)^\zeta_d$. 
\end{definition}
In view of Exercise \ref{master_equation}, one might hope that $\DTS(Q,W)^\zeta_d/(\LL_R^{1/2}-\LL_R^{-1/2})$ is something like $p^\zeta_! \phi_{\WWW^\zeta}(j_!\ICS_{\Mst^{\zeta-st}_\mu})$, where $j:\Mst^{\zeta-st}_\mu\hookrightarrow \Mst^{\zeta-ss}_\mu$ denotes the inclusion, and $\ICS_{\Mst^{\zeta-st}_\mu}$ is defined similarly to $\ICS_{\Mst^{\zeta-ss}_\mu}$. Let us assume, we were allowed to commute $p^\zeta_!$ with $\phi_{\WWW^\zeta}$ which is a priori not clear as $p^\zeta$ is not proper. Then 
\[ p^\zeta_! \phi_{\WWW^\zeta}(j_!\ICS_{\Mst^{\zeta-st}_\mu})= \phi_{\WW\circ q^\zeta}\left(\frac{\iota_! \ICS_{\Msp^{\zeta-ss}_\mu}}{\LL^{1/2}_R-\LL_R^{-1/2}}\right). \]
for $\ICS_{\Msp^{\zeta-st}_d}=\LL_R^{((d,d)-1)/2}\unit_{\Msp^{\zeta-st}_d}$, and $\DTS(Q,W)^\zeta_d=\phi_{\WW\circ q^\zeta_d} (\iota_!\ICS_{\Msp^{\zeta-ss}_d})$ follows. This is not quite true. It turns out that the extension $\iota_!\ICS_{\Msp^{\zeta-st}_\mu}$ of $\ICS_{\Msp^{\zeta-st}_d}$  by zero has to be replaced with the ``correct'' extension $\ICS_{\overline{\Msp^{\zeta-st}_d}}$ which restricts to $\ICS_{\Msp^{\zeta-st}_d}$, but might also be nonzero on the boundary of $\Msp^{\zeta-st}_d$ inside $\Msp^{\zeta-ss}_d$. However, we have not defined $\ICS_{\overline{\Msp^{\zeta-st}_d}}$ yet.
\begin{definition}
We denote with $\ICS^{mot}_{\overline{\Msp^{\zeta-st}}}\in \underline{\Ka}_0(\Sch_{\Msp^{\zeta-ss}})[\LL^{1/2}]^{st}$ the Donaldson--Thomas function $\DTS(Q)^\zeta$ computed with respect to the stackification of the canonical vanishing cycle $\phi^{\underline{\Ka}_0(\Sch)[\LL^{1/2}]}_{can}$. Note that $\overline{\Msp^{\zeta-st}_d}=\Msp^{\zeta-ss}_d$ if $\Msp^{\zeta-st}_d\not=\emptyset$ and $\overline{\Msp^{\zeta-st}_d}=\emptyset$ else.
\end{definition}
The following result justifies the definition.
\begin{theorem}[\cite{MeinhardtReineke}] \label{main_result_1}
 If $\zeta$ is generic (see Definition \ref{generic_stability}), the element $\ICS^{mot}_{\overline{\Msp^{\zeta-st}}}$  maps to the classical intersection complex\footnote{Strictly speaking one has to normalize the class of the classical (shifted) intersection complex of $\Msp^{\zeta-ss}_d$ by multiplication with $(-\LL^{1/2})^{(d,d)-1}$ which does not change the underlying perverse sheaf.} $\ICS_{\overline{\Msp^{\zeta-st}}}$ of the closure of $\Mst^{\zeta-st}$ inside $\Msp^{\zeta-ss}$ under the map 
 $\underline{\Ka}_0(\Sch_{\Msp^{\zeta-ss}})[\LL^{1/2}]^{st}\longrightarrow \underline{\Ka}_0(\MHM(\Msp^{\zeta-ss}))[\LL^{1/2}]^{st}$ constructed in Lemma \ref{initial_object}. 
\end{theorem}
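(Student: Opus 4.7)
The strategy is to reduce the statement to a uniqueness argument via the $\Sym$-decomposition. Since $W=0$, the vanishing cycle $\phi^{\underline{\Ka}_0(\Sch)[\LL^{1/2}],st}_{can}$ acts as the identity on $\unit_{\Mst^{\zeta-ss}_\mu}$, so the defining equation for $\ICS^{mot}_{\overline{\Msp^{\zeta-st}_\mu}}$ reads
\[
 p^\zeta_!\bigl(\ICS_{\Mst^{\zeta-ss}_\mu}\bigr) \;=\; \Sym\!\left(\frac{\ICS^{mot}_{\overline{\Msp^{\zeta-st}_\mu}}}{\LL^{1/2}-\LL^{-1/2}}\right)
\]
in $\underline{\Ka}_0(\Sch_{\Msp^{\zeta-ss}})[\LL^{1/2}]^{st}$. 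The map $\eta$ to $\underline{\Ka}_0(\MHM)[\LL^{1/2}]^{st}$ is a morphism of stacky motivic theories and so commutes with $p^\zeta_!$, the $\sigma^n$-operations, and hence with $\Sym$. Applying $\eta$ yields the analogous identity for $\eta(\ICS^{mot}_{\overline{\Msp^{\zeta-st}_\mu}})$ on the MHM side. By the uniqueness of the $\Sym$-decomposition from the lemma preceding the definition of $\DTS$, it suffices to verify that the classical IC class $\ICS_{\overline{\Msp^{\zeta-st}_\mu}}$ also satisfies
\[
 p^\zeta_!\bigl(\ICS_{\Mst^{\zeta-ss}_\mu}\bigr) \;=\; \Sym\!\left(\frac{\ICS_{\overline{\Msp^{\zeta-st}_\mu}}}{\LL^{1/2}-\LL^{-1/2}}\right) \qquad (\ast)
\]
in $\underline{\Ka}_0(\MHM(\Msp^{\zeta-ss}))[\LL^{1/2}]^{st}$.

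To establish $(\ast)$, the plan is to factor $p^\zeta$ through the GIT picture of the last remark of Section 3 as $\Mst^{\zeta-ss}_\mu \xrightarrow{\pi} X^{\zeta-ss}_\mu/PG_\mu \xrightarrow{q^\zeta_{proj}} \Msp^{\zeta-ss}_\mu$. The morphism $\pi$ is a $B\GG_m$-gerbe by Exercise \ref{fiber_projectivization} and $\pi_!$ contributes a factor of $(\LL-1)^{-1}$ to the pushforward of the IC class. The morphism $q^\zeta_{proj}$ is proper (it is a GIT quotient map), so the BBDG decomposition theorem applies to it in $D^b\MHM$: the pushforward decomposes as a direct sum of shifted intersection complexes indexed by closures of strata. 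The strata of $\Msp^{\zeta-ss}_\mu$ are precisely the Luna strata, naturally indexed by S-equivalence types $\bigoplus_i V_i^{n_i}$ with pairwise non-isomorphic $\zeta$-stable $V_i$ of slope $\mu$, and hence form symmetric products of the stable loci $\Msp^{\zeta-st}_{d_i}$ (as spelled out in Exercise \ref{master_equation}).

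The role of the generic condition $\langle d, d' \rangle = 0$ (Definition \ref{generic_stability}) is crucial here: it forces the Luna slice at a polystable point $\bigoplus V_i^{n_i}$ to be a cotangent-like $\Ext^1$-model on which the group $\prod \Gl(n_i)$ acts in a rigid fashion, so that each IC summand produced by BBDG is supported exactly on the closure of a product stratum $\prod_i \Sym^{n_i}(\Msp^{\zeta-st}_{d_i})$ with no ``exotic'' monodromy local systems. Combining this with the relative hard Lefschetz theorem, the $B\GG_m$-factor from $\pi_!$, and the identity $\sigma^n(\LL^{1/2})=0$ for $n\ge 2$ from Exercise \ref{square_root} (which produces the $(\LL^{1/2}-\LL^{-1/2})^{-1}$ denominator from $(1-\LL^{-1})^{-1}$ after absorbing a Tate twist), the summation reorganizes into the claimed symmetric product $(\ast)$ by induction on $|d|$.

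The main obstacle is the identification, summand by summand, of the BBDG decomposition with the symmetric product expansion: one has to rule out the appearance of IC sheaves of local systems on deeper Luna strata and check that the multiplicities and Tate twists match exactly. Without the generic hypothesis the antisymmetrized Euler form is nontrivial, the Ext-quiver of a polystable point acquires nontrivial relations, and extra BPS-type correction terms genuinely arise; controlling this delicate matching through an \'etale-local Luna-slice analysis is precisely the content of the main theorem of \cite{MeinhardtReineke}, on which this result rests.
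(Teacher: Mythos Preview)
The paper does not actually prove this theorem: it is stated with a citation to \cite{MeinhardtReineke} and no argument is given in the text. So there is no in-paper proof to compare against.

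That said, your sketch is a faithful outline of the strategy used in the cited reference. The reduction via uniqueness of the $\Sym$-decomposition is exactly right, as is the factorisation of $p^\zeta$ through the $PG_d$-quotient and the invocation of the BBDG decomposition theorem for the proper GIT map. You also correctly locate where genericity enters: it is what forces the local Ext-quiver at a polystable point to be symmetric, so that the decomposition theorem produces only the ``expected'' IC summands supported on closures of product strata, with trivial local systems. Your final paragraph is honest in acknowledging that the actual matching of summands, multiplicities, and Tate twists---carried out via an \'etale-local Luna slice analysis and an inductive virtual small resolution argument---is the substance of \cite{MeinhardtReineke} and is not reproduced here.

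One small caution: your remark that genericity makes the Luna slice ``cotangent-like'' with a ``rigid'' $\prod_i \Gl(n_i)$-action is a bit imprecise. What genericity really buys is that the local Ext-quiver is symmetric (since $\langle d_i,d_j\rangle=0$), so that the slice model is a quiver variety of a doubled quiver; the purity and IC-identification then come from this symmetry together with relative hard Lefschetz, not from rigidity of the action per se. But as a high-level summary of where the argument goes, your proposal is accurate and appropriately modest about what it does not itself establish.
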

\begin{example} \rm
 Assuming equation (\ref{eq7}) for all $a\in R(X)$ and all $n\in \NN$ so that $\phi^R_{can}$ has a stacky extension $\phi^{R[\LL^{1/2}_R]}_{can}$.  In this case, $\DTS(Q,W)^\zeta=\DTS(Q)^\zeta$ is just the image of $\ICS^{mot}_{\overline{\Msp^{\zeta-st}}}$ under the canonical map $\underline{\Ka}_0(\Sch_{\Msp^{\zeta-ss}})[\LL^{1/2}]^{st} \longrightarrow R(\Msp^{\zeta-ss})[\LL^{1/2}]^{st}$ of Lemma \ref{initial_object}, and we may define $\ICS_{\overline{\Msp^{\zeta-st}}}:=\DTS(Q)^\zeta\in R(\Msp^{\zeta-ss})[\LL^{1/2}]^{st}$. As for mixed Hodge modules one can show that $\ICS_{\overline{\Msp^{\zeta-st}}}$ has a lift in $R(\Msp^{\zeta-ss})[\LL^{-1/2}]$ if the canonical map $\underline{\Ka}_0(\Sch)\to R$ factorizes trough $\underline{\Ka}_0(\MHM)$ or if $R$ is a sheaf in the \'{e}tale topology with $[\PP^r]$ acting as a nonzero divisor in each group $R(X)$ for all $r\in \NN$.  Note that we can replace $R$ with $R^{gm}$ to ensure equation (\ref{eq7}).
\end{example}
One can also prove the following result which should be seen as the analogue of Exercise \ref{master_equation}.
\begin{theorem}[\cite{DavisonMeinhardt3}] \label{main_result_2}
 Recall that every stacky vanishing cycle with values in $R$ satisfying our assumptions defines a map $\phi_{\WW\circ q^\zeta}:\underline{\Ka}_0(\Sch_{\Msp^{\zeta-ss}})^{st}\longrightarrow R(\Msp^{\zeta-ss})$. If this map commutes with the $\Sym^n$-operations of equation (\ref{lambda_operations}) for every $n\in \NN$, and if $\zeta$ is generic, then $\DTS(Q,W)^\zeta=\phi_{\WW\circ q^\zeta}\big(\ICS^{mot}_{\overline{\Msp^{\zeta-st}}}\big)$. 
\end{theorem}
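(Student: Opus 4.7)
The strategy is to apply the morphism $\phi_{\WW\circ q^\zeta}$ to the defining identity for $\ICS^{mot}_{\overline{\Msp^{\zeta-st}}}$, and then to invoke the uniqueness of $\DTS(Q,W)^\zeta$ built into its defining equation. By construction $\ICS^{mot}_{\overline{\Msp^{\zeta-st}}}$ is the Donaldson--Thomas function for the zero potential computed with respect to the stackification of the canonical vanishing cycle, whose value on every morphism is just the unit. Consequently, for every slope $\mu$,
\[ p^\zeta_!(\ICS_{\Mst^{\zeta-ss}_\mu}) \;=\; \Sym\!\left(\frac{\ICS^{mot}_{\overline{\Msp^{\zeta-st}_\mu}}}{\LL^{1/2}-\LL^{-1/2}}\right) \qquad \text{in } \underline{\Ka}_0(\Sch_{\Msp^{\zeta-ss}})[\LL^{1/2}]^{st}. \tag{$\star$} \]

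The next step is to apply $\phi_{\WW\circ q^\zeta}$ to both sides of $(\star)$. On the right-hand side, the hypothesis that this map commutes with all $\Sym^n$-operations, together with its $R(\Spec \kk)$-linearity (a consequence of compatibility with exterior products, which allows one to pull the scalar $(\LL^{1/2}-\LL^{-1/2})^{-1}$ across the map), yields
\[ \phi_{\WW\circ q^\zeta}\!\left(\Sym\!\left(\frac{\ICS^{mot}_{\overline{\Msp^{\zeta-st}_\mu}}}{\LL^{1/2}-\LL^{-1/2}}\right)\right) \;=\; \Sym\!\left(\frac{\phi_{\WW\circ q^\zeta}(\ICS^{mot}_{\overline{\Msp^{\zeta-st}_\mu}})}{\LL^{1/2}-\LL^{-1/2}}\right). \]

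The main obstacle is the left-hand side: one must show
\[ \phi_{\WW\circ q^\zeta}\!\bigl(p^\zeta_!(\ICS_{\Mst^{\zeta-ss}_\mu})\bigr) \;=\; p^\zeta_!\!\bigl(\phi_{\WWW^\zeta}(\ICS_{\Mst^{\zeta-ss}_\mu})\bigr), \]
which, since $\WWW^\zeta=\WW\circ q^\zeta\circ p^\zeta$, amounts to interchanging the vanishing cycle with pushforward along the good-moduli-space morphism $p^\zeta$. This is not an instance of the proper-pushforward axiom because $p^\zeta$ is typically not proper; rather, it is the ``integral identity'' of \cite{DavisonMeinhardt3}. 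Its proof exploits the genericity of $\zeta$ to produce Luna-type \'{e}tale local models at polystable points, which reduce the question to an explicit vanishing-cycle computation on the Ext$^1$-quiver of each polystable representation, where the $\GG_m$-equivariant structure is visible and Theorem \ref{equivariant_case} applies. This is the deepest input and the unique place in the argument where the generic stability hypothesis genuinely enters.

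Combining the two computations gives
\[ p^\zeta_!\!\bigl(\phi_{\WWW^\zeta}(\ICS_{\Mst^{\zeta-ss}_\mu})\bigr) \;=\; \Sym\!\left(\frac{\phi_{\WW\circ q^\zeta}(\ICS^{mot}_{\overline{\Msp^{\zeta-st}_\mu}})}{\LL^{1/2}-\LL^{-1/2}}\right). \]
But the left-hand side is precisely the quantity defining $\DTS(Q,W)^\zeta_\mu$, whose right-hand side reads $\Sym(\DTS(Q,W)^\zeta_\mu/(\LL^{1/2}-\LL^{-1/2}))$. Since the lemma preceding Exercise \ref{master_equation} guarantees that an element $a\in R(\Msp^{\zeta-ss})$ with $a_0=0$ is determined uniquely by its image under $\Sym$, we conclude $\DTS(Q,W)^\zeta_\mu=\phi_{\WW\circ q^\zeta}(\ICS^{mot}_{\overline{\Msp^{\zeta-st}_\mu}})$ for every $\mu$, and assembling these over all slopes yields the global identity $\DTS(Q,W)^\zeta=\phi_{\WW\circ q^\zeta}(\ICS^{mot}_{\overline{\Msp^{\zeta-st}}})$.
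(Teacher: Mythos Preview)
The paper itself does not prove this theorem; it only states the result and cites \cite{DavisonMeinhardt3}. So there is no in-text proof to compare against. That said, your outline exactly follows the heuristic the paper sketches in the paragraph immediately preceding the definition of $\ICS^{mot}_{\overline{\Msp^{\zeta-st}}}$: take the defining equation for the motivic intersection complex (the $W=0$ case with the canonical vanishing cycle), hit it with $\phi_{\WW\circ q^\zeta}$, pull the $\Sym$ through using the hypothesis, and recognize the other side as the defining equation for $\DTS(Q,W)^\zeta$.

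Your identification of the crux is accurate: the entire content lies in the commutation $\phi_{\WW\circ q^\zeta}\circ p^\zeta_! = p^\zeta_!\circ\phi_{\WWW^\zeta}$ on $\ICS_{\Mst^{\zeta-ss}_\mu}$, which fails to follow from the axioms because $p^\zeta$ is not proper. You correctly point to the integral identity and to \'{e}tale-local models at polystable points as the mechanism, and to genericity of $\zeta$ as the hypothesis enabling those local models. This is indeed how \cite{DavisonMeinhardt3} proceeds. Just be aware that what you have written is a reduction of the theorem to that single non-formal step rather than a self-contained proof; everything outside that step is formal bookkeeping, and everything inside it is the actual theorem.
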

\begin{example} \rm The assumption on $\phi_{\WW q^\zeta}$ is true for $\phi=\phi^{mhm}$. Hence, if $\zeta$ is generic, $\DTS(Q,W)^\zeta_d=\phi^{mhm}_{\WW\circ q^\zeta_d}\big(\ICS_{\Msp^{\zeta-st}_d}\big)$ if $\Msp^{\zeta-st}_d\not=\emptyset$ and zero else. It is not known yet whether or not $\phi^{mot}_{\WW\circ q^\zeta}$ commutes with the $\Sym^n$-operations, and we cannot apply the theorem. However, a counterexample is also not known, and conjecturally the theorem also holds for $\phi^{mot}$.  
\end{example}
\begin{exercise} Use the support property of $\phi^R$ (see Corollary \ref{support_property}) to show that $\DTS(Q,W)^\zeta_d$ is supported on $\Msp^{W,\zeta-ss}_d$, i.e.\ is an element in $R(\Msp^{W,\zeta-ss}_d)$, where $\Msp^{W,\zeta-ss}_d$ the moduli space parameterizing $\zeta$-polystable $\CC Q$-representations $V$ of dimension $d$ such that $\partial W/\partial \alpha=0$  on $V$ for all $\alpha\in Q_1$.
\end{exercise}

\subsection{Examples}
The aim of this section is to provide some examples of (motivic) Donaldson--Thomas invariants. 

\subsubsection{\rm \textbf{The m-loop quiver}}
 Let us consider the quiver $Q^{(m)}$ with one vertex and $m$ loops. 
 The choice of a stability condition is irrelevant as $\Mst^{\zeta-ss}=\Mst$. We take the canonical vanishing cycle of $\underline{\Ka}_0(\Sch)[\LL^{1/2}]$ and are only interested in Donaldson--Thomas invariants. As $\underline{\Ka}(\Sch_\NN)[\LL^{1/2}]^{st}\cong \Ka(\Sch_\kk)[\LL^{-1/2},(\LL^n-1)^{-1}:n\in \NN_\ast][[t]]$, we end up with the following power series
 \[ A^{(m)}(t):=(\dim \circ p)_!\big(\phi_{\WWW}(\ICS_{\Mst})\big)=\sum_{d\ge 0}\frac{\LL^{(m+1)d^2/2}}{[\Gl(d)]}t^d=\sum_{d\ge 0} \frac{\LL^{(md^2+d)/2}}{\prod_{i=1}^d(\LL^i-1)}t^d.\]
Note that the series is also well-defined for $m\in \ZZ$.
\begin{exercise}
 Prove the identity $A^{(m)}(\LL t)-A^{(m)}(t)=\LL^{(m+1)/2}\,t\, A^{(m)}(\LL^mt)$ for all $m\in \ZZ$.
\end{exercise}
For $m\in \NN$ we introduce the series 
\[ B^{(m)}(t):=A^{(m)}(\LL t)/A^{(m)}(t)=\Sym\big( \sum_{d\ge 1} \LL^{1/2}[\PP^{d-1}]\DT(Q^{(m)})_d\,t^d\big), \]
where we used the properties of $\Sym$ and the fact that $\dim_!$ commutes with $\Sym$. Moreover, due to the previous exercise
\[ B^{(m)}(t)=1+\LL^{(m+1)/2}t \prod_{i=0}^{m-1}B^{(m)}(\LL^it). \]
For $m=0$ the empty product on the right hand side is $1$, and we obtain $B^{(0)}(t)=1+\LL^{1/2}t$ as well as
\[ \DT(Q^{(0)})_d=\begin{cases} 1 & \mbox{for }d=1, \\ 0&\mbox{else.}   \end{cases} \]
This is in fully agreement with Theorem \ref{main_result_1} as $\Msp^{simp}_d=\Spec\kk$ for $d=1$ and $\Msp^{simp}_d=\emptyset$ else.\\
For $m=1$, we get $B^{(1)}(t)=1+\LL tB^{(1)}(t)$, and $B^{(1)}(t)=1/(1-\LL t)=\sum_{d\in \NN} \LL^dt^d$ follows. Hence,
\[ \DT(Q^{(1)})_d=\begin{cases} \LL^{1/2} & \mbox{for }d=1, \\ 0&\mbox{else.}   \end{cases} \]
Again, this is in fully agreement with Theorem \ref{main_result_1} as $\Msp^{simp}_d=\AAA$ for $d=1$ and $\Msp^{simp}_d=\emptyset$ else.\\
Solving the pseudo-algebraic equation for $m\ge 2$ is much more complicated, but the answer is given as follows. Note that $\ZZ/(d)=:C_d$ acts on the set $U_d:=\{ (a_1,\ldots,a_d)\in \NN^d\mid a_1+\ldots+a_d=(m-1)d\}$ by cyclic permutation. We call $a=(a_1,\ldots,a_d)$ primitive if $\Stab_{C_d}(a)=\{0\}$, and almost primitive if $a$ is primitive or $m\equiv 0 (2), d\equiv 2 (4)$ and $a=(b_1,\ldots,b_{d/2},b_1,\ldots,b_{d/2})$ for some primitive $(b_1,\ldots,b_{d/2})$. The subset $U^{ap}_d:=\{ a\in U_d\mid a\mbox{ is almost primitive}\}$ is obviously stable under the $C_d$-action. Define $\deg(a)=\sum_{i=1}^d(d-i)a_i$ and $\deg(C_d\cdot a)=\min\{ \deg(a')\mid a'\in C_d\cdot a\}$. 
\begin{theorem}[\cite{Reineke4}]
 Let $d\ge 1$ and $m\ge 2$. Then $\dim(\Msp^{simp}_d)=(m-1)d^2+1$ and 
 \[\DT(Q^{(m)})_d=\LL^{\frac{(m-1)d^2+1}{2}}\frac{1-\LL^{-1}}{1-\LL^{-d}}\sum_{C_d\cdot a\in U^{ap}_d/C_d} \LL^{-\deg(C_d\cdot a)}. \]
 In particular, $\chi_c(\DT(Q^{(m)})_d)=\frac{(-1)^{(m-1)d^2+1}}{d}|U^{as}_d/C_d|$.
\end{theorem}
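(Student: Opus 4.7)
The argument decomposes into two largely independent parts: verifying the dimension formula and solving the pseudo-algebraic equation for $B^{(m)}(t)$ in closed form.

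For the dimension, I would first exhibit, for every $d\ge 1$ and $m\ge 2$, an absolutely simple $d$-dimensional representation of $Q^{(m)}$ (take one loop acting by a regular Jordan block and a second loop by a sufficiently generic matrix); this forces the open subscheme $X^{st}_d\subset X_d=\Mat_\CC(d,d)^m$ of simple tuples to be nonempty and dense. Since simple tuples are free modules over the trivial stabilizer of $PG_d=G_d/\GG_m$, the GIT quotient $\Msp^{simp}_d$ is a geometric quotient on a dense open subset and has dimension $\dim X_d-\dim PG_d=md^2-(d^2-1)=(m-1)d^2+1$.

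Next I would iterate the functional equation $B^{(m)}(t)=1+\LL^{(m+1)/2}t\prod_{i=0}^{m-1}B^{(m)}(\LL^i t)$ to write $B^{(m)}(t)$ as a sum over finite rooted planar $m$-ary trees. Each tree $T$ with $d$ leaves contributes a term in which every internal node acquires a weight $\LL^{(m+1)/2}$ and each of its $m$ children is additionally rescaled by $\LL^{i}$, $0\le i\le m-1$. Grouping trees by the multi-sequence $a=(a_1,\dots,a_d)\in\NN^d$ with $\sum a_i=(m-1)d$ that records the total $\LL$-weight picked up along each branch recasts the coefficient of $t^d$ in $B^{(m)}(t)$ as an explicit sum indexed by the set $U_d$, with exponents matching the function $\deg(a)=\sum(d-i)a_i$ of the theorem up to cyclic reindexing.

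The third and most delicate step is to apply the plethystic logarithm to extract $\DT(Q^{(m)})_d$ from $B^{(m)}(t)=\Sym\bigl(\sum_{d\ge 1}\LL^{1/2}[\PP^{d-1}]\DT(Q^{(m)})_d\,t^d\bigr)$. This inversion is made tractable by the \emph{cycle lemma} of Dvoretzky--Motzkin, which identifies $m$-ary plane trees with $d$ leaves with cyclic equivalence classes of sequences in $U_d$: formally, the plethystic logarithm introduces the cyclic-orbit sum $\sum_{C_d\cdot a\in U_d/C_d}\LL^{-\deg(C_d\cdot a)}$, and the factor $(1-\LL^{-1})/(1-\LL^{-d})$ arises from $\LL^{1/2}[\PP^{d-1}]=\LL^{1/2}(\LL^d-1)/(\LL-1)$ after canceling against the power of $\LL$ from the dimension shift $(m-1)d^2+1$. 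The main obstacle is the bookkeeping of the square root $\LL^{1/2}$: because $\psi^n(\LL^{1/2})=0$ for $n\ge 2$ (Exercise \ref{square_root}), all Adams corrections in the plethystic logarithm vanish except the $n=2$ term, and exactly this single correction forces the orbit sum to run over $U_d^{ap}/C_d$ rather than over all primitive orbits, accounting for the parity condition $m\equiv 0\pmod 2$, $d\equiv 2\pmod 4$ in the definition of almost-primitivity.

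Finally, I would deduce the Euler characteristic by specializing $\LL\to 1$: the prefactor $\LL^{((m-1)d^2+1)/2}$ applied to $\chi_c$ produces the sign $(-1)^{(m-1)d^2+1}$ (via the Behrend-function normalization $\phi^{con}_f=(-1)^{\dim X}$ mentioned after the definition of $\phi^{con}_f$), the ratio $(1-\LL^{-1})/(1-\LL^{-d})$ tends to $1/d$, every $\LL^{-\deg(C_d\cdot a)}$ becomes $1$, and the almost-primitive/primitive distinction collapses under $\chi_c$, leaving the cardinality $|U_d^{ap}/C_d|=|U_d^{as}/C_d|$ stated in the theorem.
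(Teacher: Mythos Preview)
The paper does not supply its own proof of this theorem: it is stated with a citation to Reineke and preceded by the remark that ``Solving the pseudo-algebraic equation for $m\ge 2$ is much more complicated, but the answer is given as follows.'' So there is no in-paper argument to compare your sketch against; I can only assess whether your outline would actually go through.

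Your dimension computation and the idea of expanding $B^{(m)}(t)$ as a sum over rooted $m$-ary trees are sound and are essentially what Reineke does. The cycle-lemma heuristic for passing from trees to cyclic orbits in $U_d$ is also on the right track.

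The genuine gap is in your step 3. You assert that $\psi^n(\LL^{1/2})=0$ for $n\ge 2$, citing Exercise~\ref{square_root}, and build the entire explanation of almost-primitivity on this. But that exercise gives $\sigma^n(\LL^{1/2})=0$ for $n\ge 2$, not $\psi^n$. From $\sigma_t(\LL^{1/2})=1+\LL^{1/2}t$ one computes
\[
\psi_t(\LL^{1/2})=\frac{\LL^{1/2}t}{1+\LL^{1/2}t}=\sum_{n\ge 1}(-1)^{n-1}\LL^{n/2}t^n,
\]
so $\psi^n(\LL^{1/2})=(-1)^{n-1}\LL^{n/2}\neq 0$ for all $n\ge 1$. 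Consequently your claim that ``all Adams corrections in the plethystic logarithm vanish except the $n=2$ term'' is false, and with it your mechanism for why the sum restricts to $U_d^{ap}$ rather than to primitive orbits. The almost-primitive correction in Reineke's argument comes from a more delicate cancellation: the plethystic inversion involves $\psi^n$ applied to the full coefficient $\LL^{1/2}[\PP^{d-1}]\DT(Q^{(m)})_d$, and the interaction of the sign $(-1)^{n-1}$ with the parity of $(m-1)d^2+1$ is what singles out the case $m$ even, $d\equiv 2\pmod 4$. Your sketch does not capture this, so as written the crucial step would not produce the stated formula.
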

\begin{exercise}
 By taking the Euler characteristic of every coefficient of $B^{(m)}(t)$, we get a series $\beta^{(m)}(t)\in \ZZ[[t]]$ satisfying $\beta^{(m)}(t)=1+(-1)^{m+1}\beta^{(m)}(t)^m$, but also $\beta^{(m)}(t)=\prod_{d\ge 1}(1-t^d)^{d\Omega^{(m)}_d}$ using the shorthand $\Omega^{(m)}_d:=\chi_c(\DT(Q^{(m)})_d)$. Prove that 
 \[ \Omega^{(2)}_d=\chi_c(\DT(Q^{(2)})_d)=\frac{1}{2d^2}\sum_{n|d} (-1)^{n+1}\mu(d/n){2n \choose n} \]
 for $d\ge 1$, where $\mu(-)$ denotes the M\"obius function. Hint: Solve the quadratic equation for $\beta^{(2)}(t)$ and use the  logarithmic derivative to prove the formula
 \[ \sum_{d\ge 1}2d^2\Omega^{(2)}_d \frac{t^d}{1-t^d}=\sum_{n\ge 1} \Big(\sum_{d|n} 2d^2\Omega^{(2)}_d\Big)t^n= 1-\frac{1}{\sqrt{1+4t}}. \]
 The Taylor expansion of $1/\sqrt{1+4t}$ is $\sum_{n\in \NN}(-1)^n{2n\choose n}t^n$. Finally, use the  M\"obius function to solve for $\Omega^{(2)}_d$. \\
 One can show $\Omega^{(2)}_d=F(d)$ for $F(d)$ being the coefficients introduced in \cite{JoyceMF}, equation (53). 
\end{exercise}

\subsubsection{\rm\textbf{Dimension reduction}}
 Let $Q$ be a quiver with potential $W$. Let $\zeta$ be a stability condition and $\mu\in [-\infty,\infty)$. Assume $\Mst^{\zeta-ss}_d=\Mst_d$ for all $d\in \Lambda_\mu$. As an example, we may take the King stability condition $\theta=0$ and $\mu=0$. Given a motivic theory $R$, the following arguments apply to $\phi^{R,st}$ with values in $R^{gm,st}_{mon}$. Without loss of generality, we consider the case $R=\underline{\Ka}_0(\Sch)$, i.e.\ $\phi=\phi^{mot,st}$. As in the previous example, we are only interested in Donaldson--Thomas invariants. A subset $C\subset Q_1$ such that every cycle in $W$ contains exactly one arrow in $C$ is called a cut of $W$. Let $\GG_m$ act on $X_d=\prod_{\alpha:i\to j} \Hom(\kk^{d_i},\kk^{d_j})$ by multiplying a linear map corresponding to $\alpha\in C$ with $g\in \GG_m$. By assumption, $\Tr(W)_d$ is homogeneous of degree one, and $\int_{X_d}\phi_{\Tr(W)_d}^{mot}$ is the residue class of $[X_d\xrightarrow{\Tr(W)_d}\AAA] $ in $\Ka_0(\Sch_\kk)_{mon}$ according to Theorem \ref{equivariant_case}. Consider the projection $\tau_d:X_d\to Y_d$ with $Y_d=\prod_{C\not\ni \alpha:i\to j}\Hom(\kk^{d_i},\kk^{d_j})$ which is a trivial vector bundle with fiber $F=\prod_{C\ni \alpha:i\to j}\Hom(\kk^{d_i},\kk^{d_j})$. As $\Tr(W)_d$ is linear along the fibers, we can think of it as being a section $\sigma^W_d$ of the dual bundle with fiber $F^\vee=\prod_{C\ni \alpha:i\to j}\Hom(\kk^{d_j},\kk^{d_i})$ using the trace pairing. Indeed, it maps a point $M=(M_\alpha)_{\alpha\not\in C}$ to $(\frac{\partial W}{\partial \alpha}(M))_{\alpha\in C}\in F^\vee$. 
 \begin{exercise}
  Show that $[\tau_d^{-1}\{\sigma^W_d\not=0\} \xrightarrow{\Tr(W)_d}\AAA]$ is in $\pr_\kk^\ast \Ka_0(\Sch_\kk)\subset \Ka_{0,\GG_m}(\Sch_\AAA)$. Hint: Choose an open cover  $\cup_{i\in I} U_i=\{\sigma^W_d\not= 0\}\subseteq Y_d$ such that $\tau_d: \tau_d^{-1}U_ i \to U_i$ splits into the kernel of $\Tr(W)_d$ and a complement of rank one. 
 \end{exercise}
Using the exercise, we obtain 
\begin{eqnarray*} [X_d\xrightarrow{\Tr(W)_d}\AAA]&=&[\tau_d^{-1}\{\sigma^W_d=0\}\xrightarrow{0}\AAA]\;=\;[F][\{\sigma^W_d=0\}]\\ &=&\LL^{\sum_{C\ni \alpha:i\to j}d_id_j}\left[\left\{\ M\in Y_d\mid \frac{\partial W}{\partial \alpha}(M)=0 \,\forall\, \alpha\in C\right\}\right]. \end{eqnarray*}
in $\Ka_0(\Sch_\CC)_{mon}$. Therefore,
\begin{eqnarray*} \lefteqn{ \Sym\left( \frac{\DT(Q,W)^\zeta_\mu}{\LL^{1/2}-\LL^{-1/2}} \right)\; =\; (\dim\circ p)_!\big(\phi_{\WWW}(\ICS_{\Mst_\mu})\big)} \\&=&\sum_{d\in \Lambda_\mu} \LL^{(d,d)/2+\sum_{C\ni \alpha:i\to j}d_id_j}\frac{\left[\left\{M\in Y_d\mid \frac{\partial W}{\partial \alpha}(M)=0\, \forall \,\alpha\in C\right\}\right]}{[G_d]} \, t^d, \end{eqnarray*}
is actually in $\Ka_0(\Sch_\kk)[\LL^{-1/2},(\LL^n-1)^{-1}:n\in \NN_\ast][[t_i\mid i\in Q_0]]$, where we used 
\[\Ka_0(\Sch_\kk)[\LL^{-1/2},(\LL^n-1)^{-1}:n\in \NN_\ast][[t_i: i\in Q_0]]=\underline{\Ka}(\Sch_{\NN^{Q_0}})[\LL^{1/2}]^{st} \subset \underline{\Ka}(\Sch_{\NN^{Q_0}})^{st}_{mon}.\] 
This reduction process is usually called dimension reduction, and \\
$\left\{M\in Y_d\mid \frac{\partial W}{\partial \alpha}(M)=0\, \forall \,\alpha\in C\right\}/G_d $ 
is the stack of $d$-dimensional representations of the algebra $\kk Q/(\alpha,\partial W/\partial \alpha\mid \alpha\in C)$.

\subsubsection{\rm\textbf{0-dimensional sheaves on a Calabi--Yau 3-fold}}

Let us illustrate the concept of dimension reduction using the quiver $Q^{(3)}$ with one vertex and three loops $x,y,z$. The choice of the stability condition does not matter. We take the potential $W=[x,y]z=xyz-yxz$, and $\kk Q/(\partial W/\partial\alpha\mid \alpha\in Q^{(3)}_1)=\kk[x,y,z]$ follows. Hence, representations of this algebra are just 0-dimensional sheaves of finite length $d$ on the Calabi--Yau 3-fold $\AA^3$. We can take $C=\{z\}$ and obtain $\kk Q/(z,\partial W/\partial z)=\kk[x,y]$, and representations of this algebra are 0-dimensional sheaves of finite length $d$ on the Calabi--Yau 2-fold $\AA^2$. Using $(d,d)^2=-2d^2$, we have to compute
\[ \sum_{d\in \NN} \frac{\left[\left\{M\in Y_d\mid \frac{\partial W}{\partial \alpha}(M)=0\, \forall \,\alpha\in C\right\}\right]}{[\Gl(d)]}\,t^d \]
which has already been done by Feit and Fine half a century ago in \cite{FeitFine}. The answer is
\[ \Sym\Big( \frac{1}{\LL-1}\sum_{d\ge 1} [\AA^2] t^d\Big), \]
and $DT(Q^{(3)},W)_d=\LL^{3/2}=\int_{\AA^3} \ICS^{mot}_{\AA^3}$ follows for all $d\ge 1$ if we define $\ICS^{mot}_{X}=\LL^{-\dim(X)/2}\unit_X\in  \underline{\Ka}_0(\Sch_X)^{st}_{mon}$ for every smooth equidimensional variety $X$. This example has been generalized to arbitrary Calabi--Yau 3-folds by Behrend, Bryan, Szendr\H{o}i.
\begin{theorem}[\cite{BBS}] \label{skyscraper}
 The motivic Donaldson--Thomas invariant for 0-dimensional sheaves of length $d\ge 1$ on a Calabi--Yau 3-fold $X$ is given by $\int_X \ICS^{mot}_X=\LL^{-3/2}[X]\in \Ka_0(\Sch_\kk)^{st}_{mon}$.
\end{theorem}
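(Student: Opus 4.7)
\medskip

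\textbf{Proof plan.} The plan is to reduce to the affine case $X=\AAA^{\!3}$ already computed above, where the quiver $Q^{(3)}$ with potential $W=[x,y]z$ has Jacobi algebra $\kk[x,y,z]$, so length-$d$ modules over it are precisely length-$d$ zero-dimensional sheaves on $\AAA^{\!3}$, and the Feit--Fine identity yielded $\DT(Q^{(3)},W)_d=\LL^{3/2}$ for every $d\ge 1$. I package the global invariants into the generating series $F(X,t):=\sum_{d\ge 1}\DT^{mot}_d(X)\,t^d$. Integrating the master equation along $\Sym(X)\to \Spec\kk$ (which commutes with $\Sym$ because total-dimension is a monoid homomorphism), the task becomes to compute $Z(X,t):=\Sym\!\bigl(F(X,t)/(\LL^{1/2}-\LL^{-1/2})\bigr)$, which coincides with $\sum_{d}\int_{\Sym^d X} p_!\phi^{mot}_{\WWW}(\ICS_{\Mst_d(X)})\,t^d$.

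First I would verify that $F(-,t)$ is motivic in $X$: for any open--closed decomposition $X=U\sqcup Z$, a zero-dimensional sheaf on $X$ splits uniquely as the sum of its restrictions, giving $\Mst_d(X)=\bigsqcup_{a+b=d}\Mst_a(U)\times \Mst_b(Z)$ with matching trace/potential functions. The K\"unneth property $\phi_{f\boxtimes g}=\phi_f\boxtimes \phi_g$ together with the factorization $\Sym(X\sqcup Y)=\Sym(X)\times \Sym(Y)$ then gives $Z(X\sqcup Y,t)=Z(X,t)Z(Y,t)$, hence $F(X\sqcup Y,t)=F(X,t)+F(Z,t)$, so $X\mapsto F(X,t)$ descends to a function on the Grothendieck ring. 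The second and geometrically substantial step is to show $F(X,t)=c(t)\cdot [X]$ for a series $c(t)$ depending only on $\dim X=3$. Every smooth variety admits, Zariski-locally, an \'etale map $U\to \AAA^{\!3}$; on a smooth Calabi--Yau $3$-fold the holomorphic volume form trivializes formally at each closed point so that the completed local ring is $\kk[[x,y,z]]$, and the motivic vanishing cycle $\phi^{mot}$ is preserved under smooth (in particular \'etale) base change by its arc-space construction. Stratifying $X$ into locally closed pieces admitting such \'etale charts and combining with additivity, one should obtain $F(X,t)/[X]=F(\AAA^{\!3},t)/[\AAA^{\!3}]$.

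The main obstacle lies in this local uniformity step: to extract a clean factor of $[X]$ from a stratification by pieces that are only \'etale (not isomorphic) to open subschemes of $\AAA^{\!3}$, one must pin down $\DTS^{mot}_d(X)$ as the push-forward along the small diagonal $\Delta:X\hookrightarrow \Sym^d X$ of a class determined solely by the formal neighborhood of a point, which is everywhere $\Spec\kk[[x,y,z]]$. This requires the Denef--Loeser compatibility of motivic nearby/vanishing cycles with smooth base change, as well as the identification $\ICS^{mot}_X=\LL^{-3/2}\unit_X$ being preserved under pull-back along \'etale charts. Once local uniformity is secured, $c(t)$ is read off from the affine case: $F(\AAA^{\!3},t)=\LL^{3/2}\cdot t/(1-t)$ and $[\AAA^{\!3}]=\LL^3$ give $c(t)=\LL^{-3/2}\cdot t/(1-t)$, whence $F(X,t)=\LL^{-3/2}[X]\cdot t/(1-t)$ and $\DT^{mot}_d(X)=\LL^{-3/2}[X]=\int_X \ICS^{mot}_X$ for every $d\ge 1$, as claimed.
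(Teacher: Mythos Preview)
The paper does not prove this theorem. It computes the case $X=\AAA^{3}$ (via dimension reduction for $Q^{(3)}$ with $W=[x,y]z$ and the Feit--Fine identity), then states the general Calabi--Yau 3-fold case as a result of Behrend--Bryan--Szendr\H{o}i with a citation to \cite{BBS}. There is therefore no proof in the paper to compare against; only the affine computation that you correctly take as input.

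Your outline is a plausible sketch of the BBS strategy, and you have identified the genuine obstacle: passing from open pieces that are merely \'etale over $\AAA^3$ to a clean factor of $[X]$. Two remarks. First, the notation $\Mst_d(X)$ and $\phi_{\WWW}(\ICS_{\Mst_d(X)})$ makes sense in this paper only for $X=\AAA^3$, where the moduli stack is literally a quiver moduli stack with a global potential. For a general Calabi--Yau 3-fold there is no global $\WWW$; BBS works instead with the Hilbert scheme $\mathrm{Hilb}^d(X)$ and defines the virtual motive via an \'etale-local presentation as a critical locus, so your plan implicitly leaves the paper's framework at this point. Second, the heart of BBS's argument is not \'etale base change for $\phi^{mot}$ alone but a power-structure/stratification argument: one stratifies $\mathrm{Hilb}^d(X)$ by the multiplicities of the support, and uses that the punctual contribution at a point depends only on the completed local ring, which is $\kk[[x,y,z]]$ on any smooth 3-fold. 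This is exactly the ``local uniformity'' step you flag, and it is where the real content lies; your sketch does not yet supply it.
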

Note that the Donaldson--Thomas function $\DTS(Q^{(3)},W)_d$ is supported on the moduli space $\Sym^d(\AA^3)$ of semisimple $\kk[x,y,z]$ representations but the sublocus of simple representations is empty for $d>1$. However, the space of simple $\kk Q^{(3)}$-representations is nonempty even for $d>1$ which is the reason why $\DT(Q^{(3)},W)_d\not=0$. It seems plausible that $\DTS(Q^{(3)},W)_d$ is $\Delta_{d\,!}(\ICS^{mot}_{\AA^3})$, where $\Delta_d:\AA^3\hookrightarrow \Sym^d(\AA^3)$ is the diagonal embedding.   

\subsubsection{\rm\textbf{The 1-loop quiver with potential}}

Let us come back to the 1-loop quiver with $\kk Q^{(1)}=\kk[x]$ and choose an arbitrary potential $W\in \kk[x]$. Representations of the algebra $\kk Q/(dW/dx)=\kk[x]/(dW/dx)$ can be interpreted as (0-dimensional) sheaves of length $d$ on $\Crit(W)\subseteq \AAA$. Form the prime decomposition $dW/dx=c\prod_{i=1}^{r}(x-a_i)^{d_i-1}$ for some $1<d_i\in \NN, c\in \kk^\times, a_i\in \kk$ satisfying $a_i\not=a_j$ for $i\not=j$. As before, the choice of a stability condition does not effect the Donaldson--Thomas function. 
\begin{theorem}[\cite{DaMe1}] The Donaldson--Thomas function $\DTS(Q^{(1)},W)_1$ computed using  $\phi^{mot}$ is supported on $\Crit(W)$, i.e.\ is contained in $\underline{\Ka}_0(\Sch_{\Crit(W)})^{st}_{mon}\cong\prod_{i=1}^r \Ka_0(\Sch_\CC)^{st}_{mon}$, and its ``value'' at $a_i$ is given by  $\LL^{-1/2}[\AAA\xrightarrow{z^{d_i}} \AAA]\in \Ka_0(\Sch_\CC)^{st}_{mon}$. Moreover, $\DTS(Q^{(1)},W)_d=0$ for $d\not=1$. 
\end{theorem}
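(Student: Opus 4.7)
The plan is to exploit the extremely simple structure of $Q^{(1)}$. Here $X_d = \Mat_\CC(d,d)$, $G_d = \Gl(d)$, the Euler pairing satisfies $(d,d) = d^2 - d^2 = 0$ so that $\ICS_{\Mst_d} = \unit_{\Mst_d}$, and the trace potential factors as $\Tr(W)_d = W^{(d)}\circ q_d$, where $q_d \colon X_d \to \Sym^d(\AAA) = \Msp^{ssimp}_d$ is the GIT (eigenvalue) quotient and $W^{(d)}(\lambda_1,\ldots,\lambda_d) = \sum_i W(\lambda_i)$. Since every representation has the same slope, $\zeta$ is immaterial and we work with a single slope class. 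The strategy is first to compute $\DTS_1$ directly by a push-forward argument, then to identify its value at each critical point of $W$, and finally to argue that for $d\geq 2$ the master $\Sym$-identity is saturated by the $\Sym$-expansion of the $d=1$ contribution, forcing $\DTS_d = 0$.

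For $d=1$, the group $\Gl(1) = \GG_m$ acts trivially on $X_1 = \AAA$, so $\Mst_1 \cong \AAA\times B\GG_m$ and $p_1\colon \Mst_1 \to \Msp^{ssimp}_1 = \AAA$ only contracts the $B\GG_m$-factor. Under the identification $R^{st}(\Mst_1) \cong R(\AAA)^{\GG_m} = R(\AAA)$, the class $\phi_{\WWW_1}(\ICS_{\Mst_1})$ corresponds to $\phi^{mot}_W$, and pushing forward by $p_1$ multiplies by $[B\GG_m]_R = 1/[\GG_m]_R = 1/(\LL-1)$, so that
\[
p_{1!}\,\phi_{\WWW_1}(\ICS_{\Mst_1}) \;=\; \frac{\phi^{mot}_W}{\LL-1} \;\in\; R(\AAA).
\]
Since the only way to build dimension vector $1$ in the $\Sym$-expansion of the defining identity is as a single simple summand, matching the $d=1$ component yields $\DTS(Q^{(1)},W)_1 = (\LL^{1/2}-\LL^{-1/2})\cdot p_{1!}\phi_{\WWW_1}(\ICS_{\Mst_1}) = \LL^{-1/2}\phi^{mot}_W$. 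By Corollary \ref{support_property}, $\phi^{mot}_W$ is supported on $\Crit(W) = \{a_1,\ldots,a_r\}$, giving the support claim. To evaluate at $a_i$, the hypothesis on $dW/dx$ forces $W - W(a_i)$ to vanish exactly to order $d_i$ at $a_i$, so by Weierstrass preparation there is an \'{e}tale neighborhood of $a_i$ with a coordinate $z$ in which $W - W(a_i) = z^{d_i}$; the smooth pull-back property together with translation invariance of the vanishing cycle gives $\phi^{mot}_{W,a_i} = \phi^{mot}_{z^{d_i},0}$, and Theorem \ref{equivariant_case}, applied with the weight-one $\GG_m$-action on $\AAA$ for which $z^{d_i}$ is homogeneous of degree $d_i$, yields
\[
\phi^{mot}_{z^{d_i},0} \;=\; \int_{\AAA}\phi^{mot}_{z^{d_i}} \;=\; [\AAA\xrightarrow{z^{d_i}}\AAA] \;\in\; \Ka_0(\Sch_\CC)^{gm}_{mon},
\]
using $\Crit(z^{d_i})=\{0\}$ for $d_i\geq 2$. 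This gives the stated value of $\DTS_1$ at $a_i$.

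The main obstacle is the vanishing $\DTS_d = 0$ for $d\geq 2$, equivalent to the motivic identity
\[
\frac{q_{d!}\,\phi^{mot}_{\Tr(W)_d}}{[\Gl(d)]_R} \;=\; \Sym^d\!\left(\frac{\phi^{mot}_W}{\LL-1}\right) \qquad\text{for all }d\geq 1,
\]
so that $\sum_d p_{d!}\phi_{\WWW_d}(\ICS_{\Mst_d})$ is entirely the $\Sym$-expansion of the $d=1$ term. The approach is to stratify $X_d$ by Jordan type: a point of $\Sym^d(\AAA)$ encoded by distinct eigenvalues $\{b^{(j)}\}$ with multiplicities $\{m_j\}$ has fiber under $q_d$ that reduces (via the generalized eigenspace decomposition) to a product $\prod_j \Nst_{m_j}/\Gl(m_j)$ of nilpotent quotient stacks. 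Applying property (3) of the vanishing cycle (the Thom--Sebastiani product formula) to the block-diagonal splitting $\Tr(W)_d = \sum_j \Tr(W)|_{b^{(j)}\text{-block}}$ factors the contribution of each stratum into one-vertex contributions (each controlled by the $d=1$ analysis near $b^{(j)}$) times a universal \emph{zero-potential} nilpotent factor. The latter factor is precisely what produces the single-simple answer $\DTS(Q^{(1)},0)_d = 0$ for $d\geq 2$ in the $m=1$ case of the $m$-loop example (via the Feit--Fine computation), and propagating it through the Luna stratification of $\Sym^d(\AAA)$ yields the identity above for arbitrary $W$. The technical difficulty lies in tracking the monodromy carried by $\phi^{mot}$ in $R^{gm}_{mon}$ through this stratum-by-stratum decomposition, since Theorem \ref{main_result_2} cannot be invoked directly (the $\Sym^n$-commutation is established for $\phi^{mhm}$, not for $\phi^{mot}$); however, the fact that $\Jac(Q^{(1)},W) = \CC[x]/(dW/dx)$ has only $1$-dimensional simples, all located at the $a_i$, makes the bookkeeping tractable and ultimately forces $\DTS(Q^{(1)},W)_d = 0$ for $d\geq 2$.
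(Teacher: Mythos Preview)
Your $d=1$ computation is correct and matches the paper's in spirit: $(d,d)=0$ gives $\ICS_{\Mst_1}=\unit_{\Mst_1}$, $p_{1!}$ divides by $[\GG_m]$, and the $d=1$ component of the defining $\Sym$-identity yields $\DTS_1=\LL^{-1/2}\phi^{mot}_W$. For the stalk at $a_i$ you pass to an \'{e}tale local coordinate and invoke Theorem~\ref{equivariant_case}, whereas the paper's exercise points to the embedded-resolution formula of Theorem~\ref{resolution}; both identify $\phi^{mot}_{W,a_i}$ with $[\AAA\xrightarrow{z^{d_i}}\AAA]$, your route being arguably slicker in one variable.

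The divergence is at $d\ge 2$. The paper's intended argument is Theorem~\ref{main_result_2}: for $Q^{(1)}$ the skew pairing $\langle\cdot,\cdot\rangle$ vanishes identically so every $\zeta$ is generic, and since $\CC[x]$ has no simple modules of dimension $>1$ one has $\Msp^{\zeta-st}_d=\emptyset$, hence $\ICS^{mot}_{\overline{\Msp^{\zeta-st}_d}}=0$ and $\DTS_d=\phi_{\WW\circ q_d}(0)=0$ instantly. You are right that the $\Sym^n$-commutation hypothesis of Theorem~\ref{main_result_2} is only established for $\phi^{mhm}$; the reason the exercise pairs Theorem~\ref{main_result_2} with Theorem~\ref{resolution} is that here $\WW$ on $\Msp^{ssimp}_d=\Sym^d(\AAA)$ is the power-sum $[\lambda_1,\ldots,\lambda_d]\mapsto\sum_i W(\lambda_i)$ with finite critical locus, so the required compatibility can be checked by hand from the explicit resolution formula rather than appealing to the unproven general case.

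Your alternative stratification-by-Jordan-type programme is reasonable but, as written, is a sketch: the sentence ``the bookkeeping [is] tractable and ultimately forces $\DTS_d=0$'' is an assertion, and tracking the monodromy in $R^{gm}_{mon}$ across Luna strata is exactly the nontrivial content you have not supplied. One small correction: the Feit--Fine reference is misplaced---that computation counts pairs of commuting matrices (the dimension-reduced $3$-loop quiver), whereas the ``zero-potential nilpotent factor'' for $Q^{(1)}$ is the nilpotent cone of a \emph{single} matrix, whose DT-contribution vanishes for $d\ge 2$ by the functional-equation argument for $A^{(1)}(t)$ in the $m$-loop example, not by Feit--Fine.
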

\begin{exercise}
 Prove this result using Theorem \ref{main_result_2} and the explicit form of the vanishing cycle given by embedded resolutions as in Theorem \ref{resolution}.
\end{exercise}
The formula remains true if we replace $\phi^{mot}$ with $\phi^R$ due to Exercise \ref{functoriality2}.

\subsubsection{\rm\textbf{Sheaves on (-2)-curves}}

Consider the following quiver $Q$
\[ \xymatrix @C=2cm { \bullet \ar@(ul,dl)[]_X \ar@/^1pc/[r]_B \ar@/^2pc/[r]^A & \bullet \ar@(ur,dr)[]^Y \ar@/^1pc/[l]_C \ar@/^2pc/[l]^D }\]
with potential 
\[ W_n=\frac{1}{n+1}\big(X^{n+1}-Y^{n+1}\big) - XCA+XDB+YAC-YBD \mbox{ for some }0<n\in \NN. \]
The bounded derived category $D^b\Jac(Q,W_n)$ has also a geometric interpretation. For this consider the singular affine variety $X_n=\{x^2+y^2+(z+w^n)(z-w^n)=0\}\subset \AA^4$ which is a local model for a 3-fold with an $A_{n}$-singularity. By blowing-up $X_n$ in $\{x=z\pm w^d=0\}$ we get two minimal resolutions $Y_n^\pm$ with an  exceptional locus $C\cong \PP^1$. The normal bundle of $C$ inside $Y_n^\pm$ is $\mathcal{O}_{\PP^1}(-1)\oplus \mathcal{O}_{\PP^1}(-1)$ for $n=1$ and $\mathcal{O}_{\PP^1}\oplus \mathcal{O}_{\PP^1}(-2)$ for $n>1$. In particular, $Y_n^\pm$ is a Calabi--Yau 3-fold which has actually a locally trivial fibration over $C$ with fiber $\AA^2$. For $d=1$, $Y^\pm_1$ is isomorphic to the normal bundle $\mathcal{O}_{\PP^1}(-1)\oplus \mathcal{O}_{\PP^1}(-1)$ and known as the conifold resolution. For $d>1$ this fibration is not a vector bundle as the transition functions are not linear. The resolutions $Y_n^+$ and $Y_n^-$ are related via a flop over $X_n$, and also isomorphic to each other. Moreover,
\[ D^b\Jac(Q,W_n) \cong D^b\Coh(Y_n^\pm) \]
and (complexes of) nilpotent representations on the left hand side correspond to (complexes of) sheaves supported on $C\subseteq Y^\pm_n$. We are only interested in these objects and choose any stability condition $\zeta=(\zeta_1,\zeta_2)$ with $\zeta_1\nparallel \zeta_2$ in $\mathbb{R}^2\cong\CC$. 
\begin{theorem}[\cite{DaMe2}]
 The Donaldson--Thomas invariant\\  $\DT(Q,W_n)^{nilp}_{(d_1,d_2)}\in \Ka_0(\Sch_\kk)^{st}_{mon}$ of nilpotent representations computed with respect to $\phi^{mot}$ is given by
 \[ \DT(Q,W_n)^{\nilp}_{(d_1,d_2)}=\begin{cases} \LL^{-3/2}[C]=\LL^{-3/2}(\LL+1) & \mbox{ if } 0\not= d_1=d_2, \\ \LL^{-1/2}[\AAA\xrightarrow{z^{n+1}}\AAA] &\mbox{ if }|d_1-d_2|=1, \\ 0& \mbox{ else.} \end{cases} \]
\end{theorem}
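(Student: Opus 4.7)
The plan is to verify the three cases of the formula by combining the derived equivalence $D^b\Jac(Q, W_n) \simeq D^b\Coh(Y_n^\pm)$ stated just above the theorem (under which nilpotent $\Jac(Q, W_n)$-representations correspond to objects set-theoretically supported on the exceptional curve $C \cong \PP^1$) with direct computations of the relevant moduli spaces and the main theorems of this section. For the vanishing case $|d_1 - d_2| \geq 2$, indecomposable objects supported on $C$ in the nilpotent heart of $D^b\Coh(Y_n^\pm)$ belong to the families $\{\mathcal{O}_p\}_{p \in C}$, $\{\mathcal{O}_C(k)\}_{k \in \ZZ}$, and their shifts by $[1]$, each having $|d_1 - d_2| \leq 1$. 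Hence no simple nilpotent representation of dimension $(d_1, d_2)$ with $|d_1 - d_2| \geq 2$ exists, and the $\Sym$-definition of $\DTS$ combined with the Jordan--H\"older decomposition of such semisimple representations forces $\DT(Q, W_n)^{\nilp}_{(d_1, d_2)} = 0$.

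For the diagonal case $(d_1, d_2) = (d, d)$ with $d \neq 0$, the stable $(1,1)$-dimensional nilpotent objects are the point sheaves $\mathcal{O}_p$ for $p \in C$, and all polystable nilpotent representations of dimension $(d, d)$ are unordered direct sums of such point sheaves; accordingly the coarse moduli space is $\Sym^d(C)$. A local coordinate computation in a formal neighborhood of $C$ inside the Calabi--Yau 3-fold $Y_n^\pm$ identifies the Chern--Simons potential with the standard $[x,y]z$ potential on $\AA^3$ fibered over $C$. This places the computation in a relative version of Theorem \ref{skyscraper}, and the Feit--Fine style calculation carried out fiberwise and integrated yields $\DT(Q, W_n)^{\nilp}_{(d, d)} = \LL^{-3/2}[C] = \LL^{-3/2}(\LL + 1)$, independent of $d$ as claimed.

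For the off-diagonal case $|d_1 - d_2| = 1$, I would identify the moduli of stable nilpotent objects with the affine line $\AAA$ parameterizing the one-parameter family of deformations of $C$ (or equivalently of $\mathcal{O}_C(k)$-type line bundles) inside $Y_n^\pm$, coming from the $\mathcal{O}_C$-summand of the normal bundle $N_{C/Y_n^\pm} \cong \mathcal{O}_C \oplus \mathcal{O}_C(-2)$. The restriction of the Chern--Simons functional to this $\AAA$ equals $z^{n+1}/(n+1)$, precisely the potential encoding the $A_n$-singularity on $X_n$. Applying the fourth example above (1-loop quiver with potential) with $c = 1$, $r = 1$, $a_1 = 0$ and $d_1 = n+1$ then produces $\LL^{-1/2}[\AAA \xrightarrow{z^{n+1}} \AAA]$, matching the assertion. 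The main obstacle is the explicit identification of the Chern--Simons functional with $z^{n+1}/(n+1)$ on the deformation space; this is handled by a local coordinate computation in $Y_n^\pm$ near $C$ and forms the technical heart of the argument as carried out in \cite{DaMe2}.
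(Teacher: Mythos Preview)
The paper does not actually prove this theorem; it cites \cite{DaMe2} and follows the statement with a short heuristic paragraph explaining why the three cases look the way they do. Your proposal tracks that heuristic closely: the diagonal case via Theorem~\ref{skyscraper}, the off-diagonal case via the $1$-loop quiver with potential, and the vanishing case by absence of simple objects. So at the level of strategy you are in agreement with the paper's discussion.

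There is, however, a genuine inaccuracy in your treatment of the case $|d_1-d_2|=1$. You write that the moduli of stable nilpotent objects is $\AAA$, parameterizing deformations of $C$ inside $Y_n^\pm$. This is not correct: nilpotent representations correspond to sheaves set-theoretically supported on $C$, so a deformation that moves $C$ off itself is no longer nilpotent. The paper's explanation is different and more precise: for each such dimension vector there is a \emph{single} simple nilpotent $\Jac(Q,W_n)$-representation $V$, its $\Ext^1(V,V)$ is one-dimensional, and the obstruction potential induced by $W_n$ on this one-dimensional deformation space is $z^{n+1}$. The $\AAA$ in the answer is therefore the $\Ext^1$-space of the local model (the $1$-loop quiver), not a moduli space of stable objects. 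With that correction, the reduction to the $1$-loop example and the resulting value $\LL^{-1/2}[\AAA\xrightarrow{z^{n+1}}\AAA]$ go through as you indicate.

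Your handling of the vanishing case is also somewhat loose: the list of indecomposables you give (point sheaves, line bundles on $C$, and their shifts) is not a complete list of indecomposable nilpotent objects---there are thickenings and extensions---but what matters is only the classification of \emph{simple} nilpotent objects, and for those your dimension-vector constraint $|d_1-d_2|\le 1$ is correct.
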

Of course, the formula remains true if we replace $\phi^{mot}$ with $\phi^R$ due to Exercise \ref{functoriality2}. Note that $\DT(Q,W_n)^{nilp}_{(d_1,d_1)}$ is just ``counting'' 0-dimensional sheaves on $Y_n^\pm$ supported on $C$ which explains the answer in view of Theorem \ref{skyscraper}. For $|d_1-d_2|=1$, there is just one simple nilpotent $\Jac(Q,W_n)$-representation $V$ with $\Ext^1(V,V)$ being of dimension one. However, the obstruction of deforming $V$ as a representation of $\Jac(Q,W_n)$ is controlled by some potential of the form $z^{n+1}$ induced by $W_n$. Hence, we are back in the context of the previous example. The case of $n=1$ has been studied earlier by Morrison, Mozgovoy, Nagao, Szendr\H{o}i in \cite{MMNS}.

\subsection{The Ringel--Hall algebra}

In the previous section we have seen some examples of Donaldson--Thomas invariants and functions. In all of these cases the choice of the stability condition did not play a crucial role. However, for a generic quiver this is not the case and the Donaldson--Thomas functions and invariants change as we vary the stability condition. There is a wall and chamber structure on the moduli space of stability conditions and these changes will only happen if we jump over a wall into a different chamber. There is, however, a formula - the wall-crossing formula - relating the  Donaldson--Thomas functions and invariants for various stability conditions. Before we state and prove the formula, let us introduce some fundamental objects in Donaldson--Thomas theory.

Fix two dimension vectors $d,d'$ and recall the following commutative diagram using the notation of section 2
\begin{equation*}  
 \xymatrix @C=2cm @R=1cm{
 \Mst_d \times \Mst_{d'} \ar[d]_{p_d\times p_{d'}}   & \Mst_{d,d'} \ar[l]_{\pi_1\times \pi_3} \ar[r]^{\pi_2}   & \Mst_{d+d'} \ar[d]^{p_{d+d'}}\\ 
 \Msp^{ssimp}_d\times \Msp^{ssimp}_{d'} \ar[rr]^(0.5)\oplus & & \Msp^{ssimp}_{d+d'} }
\end{equation*}
with $\Mst_{d,d'}$ being the stack of short exact sequences  $0\to V^{(1)} \to V^{(2)} \to V^{(3)} \to 0$  such that $\dim V^{(1)}=d, \dim V^{(3)}=d'$. The morphism $\pi_i$ maps a sequence to its i-th entry. By taking the disjoint union over all dimension vectors, we end up with
\[ \xymatrix @C=2.0cm{ \Mst\times \Mst &  \mathfrak{Exact} \ar[r]^{\pi_2} \ar[l]_{\pi_1\times \pi_3} & \Mst, }\]
where $\mathfrak{Exact}$ denotes the stack of all short exact sequences. 
\begin{definition} For a given motivic theory $R$, we call the  $R(\Spec\kk)$-module $R(\Mst)$  with the Ringel--Hall product
 \[ \ast:R(\Mst)\otimes R(\Mst) \xrightarrow{\boxtimes} R(\Mst) \xrightarrow{(\pi_1\times\pi_3)^\ast} R(\mathfrak{Exact}) \xrightarrow{\pi_{2\,!}} R(\Mst) \]
the Ringel--Hall algebra of the quiver $Q$ with respect to $R$.
\end{definition}
\begin{lemma} The Ringel--Hall algebra is an associative algebra with unit.
\end{lemma}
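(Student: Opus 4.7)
The plan is to exhibit a unit element and verify associativity by identifying both iterated Hall products with a single integral over the stack of length-three filtrations.

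For the unit, let $j:\Mst_0 \hookrightarrow \Mst$ be the inclusion of the component corresponding to the zero representation (note $\Mst_0 \cong \Spec\kk$) and put $\unit_{\mathfrak{H}} := j_!(\unit_{\Mst_0}) \in R(\Mst)$. To show $\unit_{\mathfrak{H}} \ast a = a$, I would observe that the substack $\mathfrak{Exact}|_{d=0} \subset \mathfrak{Exact}$ parameterizing sequences $0 \to 0 \to V^{(2)} \to V^{(3)} \to 0$ is canonically identified with $\Mst$ via $\pi_3$, and that under this identification $\pi_2 = \pi_3$ while $\pi_1 = j\circ(\Mst \to \Mst_0)$. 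A base change argument applied to the cartesian square obtained by pulling back $j \times \id : \Mst_0 \times \Mst \to \Mst \times \Mst$ along $\pi_1 \times \pi_3$ then yields $\unit_{\mathfrak{H}} \ast a = a$, and symmetrically $a \ast \unit_{\mathfrak{H}} = a$.

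For associativity, introduce the stack $\mathfrak{Exact}^{(3)}$ of three-step filtrations $0 \subset V^{(1)} \subset V^{(2)} \subset V^{(3)}$, together with the morphism $\pi_{1,2/1,3/2} : \mathfrak{Exact}^{(3)} \to \Mst^3$ sending such a filtration to its triple of subquotients, and the morphism $\pi_{\mathrm{tot}}:\mathfrak{Exact}^{(3)} \to \Mst$ sending it to $V^{(3)}$. I would then establish two cartesian squares: one expressing $\mathfrak{Exact}^{(3)}$ as the fiber product of $\mathfrak{Exact}$ with itself along $\pi_2 = \pi_1$ (``first extend $V^{(1)}$ by $V^{(2)}/V^{(1)}$, then extend $V^{(2)}$ by $V^{(3)}/V^{(2)}$''), and a second one corresponding to the other bracketing (``first extend $V^{(2)}/V^{(1)}$ by $V^{(3)}/V^{(2)}$, then extend $V^{(1)}$ by the result''). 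In each case, repeated application of base change, functoriality of pullback and pushforward, the projection formula, and associativity and symmetry of $\boxtimes$ identifies the corresponding iterated Hall product with $\pi_{\mathrm{tot}\,!}\,\pi_{1,2/1,3/2}^{*}(a \boxtimes b \boxtimes c)$, whence $(a\ast b)\ast c = a \ast (b\ast c)$.

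The main obstacle is purely bookkeeping: checking that the two iterated fiber products of $\mathfrak{Exact}$ over $\Mst$ really are equivalent, as stacks, to the common stack $\mathfrak{Exact}^{(3)}$, and then keeping track of the various pullback/pushforward transpositions through the resulting diagram of cartesian squares. Once the cartesian squares are in place, nothing beyond the formal properties of a (stacky) motivic theory from Proposition \ref{motivic_theories} is needed. The only subtlety worth flagging is that $\pi_2: \mathfrak{Exact} \to \Mst$ is of finite type (so $\pi_{2!}$ is defined), which is the analogue of the standard finiteness of Ext used in the classical Ringel--Hall algebra and is inherited here from the fact that $\mathfrak{Exact}_{d,d'} \to \Mst_{d+d'}$ factors through a stacky Grassmannian of subrepresentations.
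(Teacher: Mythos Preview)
Your proposal is correct and follows essentially the same route as the paper. The paper leaves the proof as a guided exercise: one writes down the $3\times 3$ diagram with $\Mst_{d,d',d''}$ (your $\mathfrak{Exact}^{(3)}$ restricted to fixed dimension vectors) in the center, checks that each square is cartesian, and then applies base change together with the compatibility of $\boxtimes$ with pull-back and push-forward; the unit is $0_!(1)$, which is your $\unit_{\mathfrak{H}}$. Your two cartesian decompositions of $\mathfrak{Exact}^{(3)}$ are precisely the left column and the top row (resp.\ right column) of that diagram, so the arguments coincide up to packaging.
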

The proof is not very difficult but a nice exercise in dealing with successive extensions. 
\begin{exercise} Let us fix three dimension vectors $d,d',d''$.
 \begin{enumerate}
  \item Consider the following diagram, where the maps are given by mapping (successive) extensions to its subquotients or intermediate extensions and also by the identity on the factors $\Mst$ not being part of an extension. 
\[ \xymatrix { \Mst_d\times \Mst_{d'}\times \Mst_{d''} & \Mst_d\times \Mst_{d',d''} \ar[l] \ar[r] & \Mst_d\times \Mst_{d'+d''} \\
 \Mst_{d,d'}\times \Mst_{d''} \ar[u] \ar[d] & \Mst_{d,d',d''} \ar[l] \ar[r] \ar[d]\ar[u]& \Mst_{d,d'+d''} \ar[d]\ar[u]\\ 
 \Mst_{d+d'}\times \Mst_{d''} & \Mst_{d+d',d''} \ar[r] \ar[l] & \Mst_{d+d'+d''} } \]
 Show that the diagram commutes and that every square is  cartesian.
 \item Use this diagram and the base change property of a motivic theory to prove associativity of the Ringel--Hall product.
 \item The zero representation induces a map $\Spec\kk\xrightarrow{0}\Mst$. Show that $1_0:=0_!(1)\in R(\Mst)$ is a unit for the Ringel--Hall product.
 \end{enumerate}
\end{exercise}
Let us form the following cartesian product
\[ \xymatrix @C=1.5cm { \Mst^{\zeta-ss}_{\mu,\mu'} \ar@{^{(}->}[r]  \ar[d] & \mathfrak{Exact} \ar[r]^{\pi_2} \ar[d]^{\pi_1\times \pi_3} & \Mst \\ 
 \Mst^{\zeta-ss}_\mu\times\Mst^{\zeta-ss}_{\mu'} \ar@{^{(}->}[r] & \Mst\times \Mst. & }
\]
If $\mu > \mu'$, the composition $\Mst^{\zeta-ss}_{\mu,\mu'} \to \Mst$ is an isomorphism onto the image which is the substack of $\Mst$ consisting of all representations whose Harder--Narasimhan filtration has only one subquotient in $\Mst^{\zeta-ss}_\mu$ and another one in $\Mst^{\zeta-ss}_{\mu'}$. Indeed, the functoriality of the Harder--Narasimhan filtration ensures that taking the Harder--Narasimhan filtration provides an inverse morphism to $\Mst^{\zeta-ss}_{\mu,\mu'} \to \Mst$. Fixing another slope $\mu''$ with $\mu'> \mu''$ we continue this way and take the fiber product 
\[ \xymatrix @C=1.5cm { \Mst^{\zeta-ss}_{\mu,\mu',\mu''} \ar@{^{(}->}[r]  \ar[d] & \mathfrak{Exact} \ar[r]^{\pi_2} \ar[d]^{\pi_1\times \pi_3} & \Mst \\ 
 \Mst^{\zeta-ss}_{\mu,\mu'}\times\Mst^{\zeta-ss}_{\mu''} \ar@{^{(}->}[r] & \Mst\times \Mst. & }
\]
which can be identified with the substack of representations having a Harder--Narasimhan filtration with subquotients in $\Mst^{\zeta-ss}_\mu,\Mst^{\zeta-ss}_{\mu'},\Mst^{\zeta-ss}_{\mu''}$. As every quiver representation has a unique Harder--Narasimhan filtration, we obtain a locally finite stratification of $\Mst\!\setminus\!\{0\}$ with strata $\Mst^{\zeta-ss}_{\mu_1,\ldots,\mu_r}\!\!\setminus\!\{0\} \hookrightarrow \Mst\!\setminus\!\{0\}$, where $\Mst^{\zeta-ss}_{\mu_1,\ldots,\mu_r}$ is  defined as above by means of $r-1$ fiber products for every strictly decreasing sequence $\mu_1> \ldots > \mu_r$ in $(-\infty,+\infty]$ of  length $r$. Using the notation
\[ \delta^\zeta_{\mu_1,\ldots,\mu_r}:= (\Mst^{\zeta-ss}_{\mu_1,\ldots,\mu_r}\hookrightarrow \Mst)_!(\unit_{\Mst^{\zeta-ss}_{\mu_1,\ldots,\mu_r}}) \in R(\Mst), \]
this stratification can be written as 
\[ \unit_{\Mst}=1_0 + \sum_{{0<r\in \NN\atop \mu_1> \ldots > \mu_r}} (\delta^\zeta_{\mu_1,\ldots,\mu_r}-1_0).\]
\begin{exercise} Prove the formula $\delta^\zeta_{\mu_1,\ldots,\mu_r}=\delta_{\mu_1}^\zeta\ast \ldots \ast \delta_{\mu_r}^\zeta$ for all sequences $\mu_1>\ldots>\mu_r$ of real numbers. \end{exercise}
Applying the formula proven in the exercise we can rewrite the infinite sum as an infinite product 
\begin{equation} \label{wall_crossing_1} \unit_{\Mst}= \prod_{\mu \searrow}^\ast \delta^\zeta_\mu \end{equation}
which is well-defined as for every dimension vector only finitely many factors contribute. Note that the (infinite) Ringel--Hall product has to be taken in decreasing order of the slopes. If $\zeta'$ is another stability condition, we conclude the formula
\begin{equation} \label{wall_crossing_2} \prod_{\mu \searrow}^\ast \delta^\zeta_\mu=\prod_{\mu \searrow}^\ast \delta^{\zeta'}_\mu \end{equation}
which relates elements in $R(\Mst)$ defined by means of two different stability conditions $\zeta$ and $\zeta'$. In order to obtain a similar formula for Donaldson--Thomas functions, we need to related the Ringel--Hall algebra with corresponding objects on the coarse moduli space which will be the topic of the next subsection.

\subsection{Integration map}

As the Donaldson--Thomas function was an object defined on $\Msp^{\zeta-ss}$, we cannot compare Donaldson--Thomas functions taken with respect to different stability conditions as $\Msp^{\zeta-ss}$ might change. To make them comparable, we need to push them down along $q^\zeta$ to $\Msp^{ssimp}$ which is the ``smallest'' of all moduli spaces. Fix a stacky vanishing cycle $\phi$, and use the fact that $q^\zeta_!$ commutes with $\Sym$, that the open embedding $j:\Mst^{\zeta-ss}_\mu\hookrightarrow \Mst$ is smooth, and the projection formula to conclude
\begin{eqnarray*} \Sym\left(\frac{q^\zeta_! \DTS(Q,W)^\zeta_\mu}{\LL^{1/2}-\LL^{-1/2}}\right) &=& q^\zeta_! \Sym\left(\frac{\DTS(Q,W)^\zeta_\mu}{\LL^{1/2}-\LL^{-1/2}}\right) \\
 &=& (q^\zeta\circ p^\zeta)_!\big(\phi_{\WWW^\zeta}(\ICS_{\Mst^{\zeta-ss}_\mu})\big)\\
 &=& (p\circ j)_!\big(\phi_{\WWW^\zeta}(j^\ast\ICS_{\Mst})\big)\\
 &=& p_! j_!\big(j^\ast\phi_{\WWW}(\ICS_{\Mst})\big)\\
 &=& p_!\big(\delta^\zeta_\mu\cap \phi_{\WWW}(\ICS_{\Mst})\big).
 \end{eqnarray*}
 \begin{definition}
 The $R(\Spec\kk)$-linear map \[I^W:R(\Mst) \ni a\longmapsto p_!\big(a\cap \phi_{\WWW}(\ICS_\Mst)\big)\in R(\Msp^{ssimp})\] is called  integration map.  
 \end{definition}
Hence, we have proven 
\[ I^W(\delta^\zeta_\mu)=\Sym\left(\frac{q^\zeta_! \DTS(Q,W)^\zeta_\mu}{\LL^{1/2}-\LL^{-1/2}}\right).\]
This formula can also been used to define Donaldson--Thomas functions $\underline{\DTS}(Q,W)^\zeta_\mu \in R(\Msp^{ssimp})$ by means of 
\[ I^W(\delta^\zeta_\mu)=\Sym\left(\frac{\underline{\DTS}(Q,W)^\zeta_\mu}{\LL^{1/2}-\LL^{-1/2}}\right)\]
if $\zeta$ is not geometric, i.e.\ $\Mst^{\zeta-ss}$ has no coarse moduli space. If $\zeta$ is geometric, then $\underline{\DTS}(Q,W)^\zeta_\mu=q^\zeta_! \DTS(Q,W)^\zeta_\mu$. 
We wish to apply $I^W$ to equation (\ref{wall_crossing_1}) or (\ref{wall_crossing_2}) to obtain a wall-crossing formula for Donaldson--Thomas functions $\underline{\DTS}(Q,W)^\zeta_\mu$. Unfortunately, $I^W$ will not be an $R(\Spec\kk)$-algebra homomorphism from the Ringel--Hall algebra $(R(\Mst),\ast)$ to $R(\Msp^{ssimp})$ with the convolution product.
\begin{definition} Define the ``quantum'' or deformed convolution product on $R(\Msp^{ssimp})$ by means of 
 \[ (a_d)_{d\in \NN^{Q_0}}\ast (b_{d'})_{d'\in \NN^{Q_0}} := \left(\sum_{d+d'=d''} \LL^{\langle d,d'\rangle/2} a_db_{d'}\right)_{d''\in \NN^{Q_0}}\]
 and similarly on $R(\NN^{Q_0})$. 
\end{definition}
As $\dim:\Msp^{ssimp}\to \NN^{Q_0}$ is a monoid homomorphisms, it will preserve the convolution and, hence, also the deformed convolution product. The main result about the integration map was essentially proven by Reineke in \cite{Reineke_HN} for $W=0$ and by Kontsevich and Soibelman in \cite{KS1} for general potential. A rigorous proof can also be found in \cite{DavisonMeinhardt3}
\begin{theorem} \label{gmebra_homomorphism} The map $I^W:(R(\Mst),\ast) \longrightarrow (R(\Msp^{ssimp}),\ast)$ is a homomorphism of $R(\Spec\kk)$-algebras.
\end{theorem}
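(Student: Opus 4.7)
The plan is to unpack $I^W(a\ast b)$ using the definitions of the Ringel--Hall product and of the integration map, and then to reorganise the resulting expression into $I^W(a)\ast I^W(b)$ by invoking three ingredients: a trace--additivity identity for $W$ on short exact sequences, the product property of the vanishing cycle, and a computation of the virtual fibre dimension of $\pi_1\times\pi_3$. By $R(\Spec\kk)$-bilinearity it suffices to treat $a\in R(\Mst_d)$ and $b\in R(\Mst_{d'})$. Abbreviate $\Phi := \phi_{\WWW}(\ICS_\Mst)$ and $\ff := \pi_1\times\pi_3 : \mathfrak{Exact}_{d,d'}\to \Mst_d\times\Mst_{d'}$. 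Starting from $I^W(a\ast b)=p_!\bigl(\pi_{2!}\ff^*(a\boxtimes b)\cap\Phi\bigr)$ and applying the projection formula for $\pi_2$ gives
\[ I^W(a\ast b) \;=\; p_!\,\pi_{2!}\bigl(\ff^*(a\boxtimes b)\cap\pi_2^*\Phi\bigr). \]

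The geometric heart of the argument is the identity
\[ \pi_2^*\Phi \;=\; \LL_R^{((d,d')+(d',d))/2}\,\ff^*(\Phi\boxtimes\Phi) \quad\text{on } \mathfrak{Exact}_{d,d'}. \]
To see this, note that writing an element of $X_{d,d'}$ in its block upper-triangular form yields the trace identity $\Tr(W)_{d+d'}|_{X_{d,d'}}=\tilde\ff^*\bigl(\Tr(W)_d\boxtimes\Tr(W)_{d'}\bigr)$, where $\tilde\ff:X_{d,d'}\to X_d\times X_{d'}$ drops the off-diagonal blocks. Since $\tilde\ff$ is a trivial affine bundle, the smooth pull-back axiom combined with the product property of $\phi^R$ forces
\[ \phi_{\Tr(W)_{d+d'}|_{X_{d,d'}}} \;=\; \tilde\ff^*\bigl(\phi_{\Tr(W)_d}\boxtimes\phi_{\Tr(W)_{d'}}\bigr). \]
Descending to the quotient stacks and comparing the $\ICS$ normalisations $\LL_R^{(d+d',d+d')/2}$ and $\LL_R^{((d,d)+(d',d'))/2}$ produces the claimed prefactor $\LL_R^{((d,d')+(d',d))/2}$.

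Substituting this identity, using the projection formula $\ff_!\ff^*(x)=x\cap\ff_!\unit_{\mathfrak{Exact}_{d,d'}}$, and factoring the pushforward along the commutative triangle $p\circ\pi_2 = \oplus\circ(p\times p)\circ\ff$ then gives
\[ I^W(a\ast b) \;=\; \LL_R^{((d,d')+(d',d))/2}\,\oplus_!\,(p\times p)_!\,\ff_!\ff^*\bigl((a\cap\Phi)\boxtimes(b\cap\Phi)\bigr). \]
The virtual fibre dimension of $\ff$ is $-(d',d)$: the off-diagonal block entries of $X_{d,d'}$ contribute $\sum_{\alpha:i\to j}d'_id_j$, and the unipotent radical of the parabolic $G_{d,d'}\subset G_{d+d'}$ contributes $-\sum_i d_id'_i$, so $\ff_!\unit_{\mathfrak{Exact}_{d,d'}}=\LL_R^{-(d',d)}\unit_{\Mst_d\times\Mst_{d'}}$. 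Collecting the powers of $\LL_R$ yields
\[ I^W(a\ast b) \;=\; \LL_R^{((d,d')+(d',d))/2-(d',d)}\,\oplus_!\bigl(I^W(a)\boxtimes I^W(b)\bigr) \;=\; \LL_R^{\langle d,d'\rangle/2}\,I^W(a)\cdot I^W(b), \]
which is exactly the $(d,d')$-summand of the deformed convolution $I^W(a)\ast I^W(b)$; summing over decompositions $d''=d+d'$ concludes the proof.

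The principal obstacle is the vanishing-cycle identity of the second paragraph. The morphism $\pi_2$ is representable proper but is \emph{not} smooth (at the atlas level it is the closed immersion $X_{d,d'}\hookrightarrow X_{d+d'}$), so $\pi_2^*\phi_\WWW = \phi_{\WWW\circ\pi_2}$ cannot be read off from the smooth pull-back axiom. One therefore argues indirectly through the smooth atlas $\tilde\ff$: the restriction of $\Tr(W)_{d+d'}$ to the flag-preserving locus literally equals a box sum, so its vanishing cycle is computed there, and the comparison with $\pi_2^*\phi_{\Tr(W)_{d+d'}}$ rests on the Thom--Sebastiani compatibility that is built into the construction of $\phi^R$ from the Denef--Loeser motivic Milnor fibre in Section~5.
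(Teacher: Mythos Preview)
The paper does not prove this theorem in the text; it merely cites Reineke (for $W=0$), Kontsevich--Soibelman, and Davison--Meinhardt. Your outline follows the standard route found in those references, and for $W=0$ it is complete: the virtual fibre computation $\ff_!\unit_{\mathfrak{Exact}_{d,d'}}=\LL_R^{-(d',d)}\unit$ together with the commutative square is exactly Reineke's argument, and the $\LL$-bookkeeping is correct.

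For $W\neq 0$ there is a genuine gap. Your displayed identity
\[
\pi_2^\ast\Phi \;=\; \LL_R^{((d,d')+(d',d))/2}\,\ff^\ast(\Phi\boxtimes\Phi)
\]
on $\mathfrak{Exact}_{d,d'}$ is strictly stronger than what is true, and in fact fails. Unwinding the normalisations, it amounts to $\iota^\ast\phi_{\Tr(W)_{d+d'}}=\phi_{\Tr(W)_{d+d'}|_{X_{d,d'}}}$ for the closed immersion $\iota:X_{d,d'}\hookrightarrow X_{d+d'}$. Take the toy model $V=\AA^2$ with $\GG_m$-action $t\cdot(x,y)=(tx,t^{-1}y)$ and $f=xy$: here $V^+\oplus V^0$ is the $x$-axis, $\iota^\ast\phi_f$ is the skyscraper $\LL_R\cdot[0]$ at the origin, whereas $\tilde\ff^\ast\phi_{f|_{V^0}}=\unit_{\AA^1}$. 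These are \emph{different} elements of $R(\AA^1)$, but their pushforwards to $\Spec\kk$ agree (both equal $\LL_R$). The same phenomenon occurs in the quiver setting.

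What the argument actually needs, and what is actually true, is only the pushed-forward statement
\[
\ff_!\,\pi_2^\ast\Phi \;=\; \LL_R^{\langle d,d'\rangle/2}\,(\Phi\boxtimes\Phi),
\]
obtained by applying the projection formula for $\ff$ one step earlier:
\[
p_!\pi_{2!}\bigl(\ff^\ast(a\boxtimes b)\cap\pi_2^\ast\Phi\bigr)
=\oplus_!(p\times p)_!\bigl((a\boxtimes b)\cap\ff_!\pi_2^\ast\Phi\bigr).
\]
This weaker identity is precisely the Kontsevich--Soibelman \emph{integral identity} for the $\GG_m$-action on $X_{d+d'}$ induced by the cocharacter $t\mapsto(t\cdot I_{d_i},I_{d'_i})_i$: with $X_{d,d'}$ the non-negative weight space and $X_d\times X_{d'}$ the fixed locus, it asserts $\tilde\ff_!\iota^\ast\phi_{\Tr(W)_{d+d'}}=[\AA^{\dim V^+}]\cdot\phi_{\Tr(W)_d}\boxtimes\phi_{\Tr(W)_{d'}}$. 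This is a substantially deeper input than Thom--Sebastiani (it is not a consequence of the three vanishing-cycle axioms in Section~5), and your final paragraph's appeal to ``Thom--Sebastiani compatibility built into the construction of $\phi^R$'' does not establish it. Once you replace the pointwise identity by the integral identity, the rest of your computation goes through unchanged.
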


\subsection{The wall-crossing identity}

Let us assume the conditions of Theorem \ref{gmebra_homomorphism}. By applying the integration map $I^W$ to the equations (\ref{wall_crossing_1}) and (\ref{wall_crossing_2}), we finally get the wall-crossing identity
\[ I^W(\unit_\Mst)= \prod_{\mu \searrow}^\ast \Sym\left(\frac{\underline{\DTS}(Q,W)^\zeta_\mu}{\LL^{1/2}-\LL^{-1/2}}\right)=\prod_{\mu \searrow}^\ast \Sym\left(\frac{\underline{\DTS}(Q,W)^{\zeta'}_\mu}{\LL^{1/2}-\LL^{-1/2}}\right) \]
relating the Donaldson--Thomas functions $\underline{\DTS}(Q,W)^\zeta$ and $\underline{\DTS}(Q,W)^{\zeta'}$ of two stability conditions $\zeta$ and $\zeta'$. Since $\dim_!$ commutes with the deformed convolution product, we obtain the same formula for the Donaldson--Thomas invariants 
\[ \dim_!I^W(\unit_\Mst)= \prod_{\mu \searrow}^\ast \Sym\left(\frac{\DT(Q,W)^\zeta_\mu}{\LL^{1/2}-\LL^{-1/2}}\right)=\prod_{\mu \searrow}^\ast \Sym\left(\frac{\DT(Q,W)^{\zeta'}_\mu}{\LL^{1/2}-\LL^{-1/2}}\right). \]
Let us illustrate this formula with an example.
\begin{example}[cf.\ Example \ref{Kronecker}] \rm
 Consider the $A_2$-quiver $Q: \bullet_1 \longrightarrow \bullet_2$ with potential $W=0$ and the canonical vanishing cycle of $\underline{\Ka}_0(\Sch)[\LL^{1/2}]^{st}$.  
 \begin{exercise} Show that every representation $V_1\xrightarrow{M}V_2$ of $Q$ is a direct sum of copies of $S_1=(\kk\xrightarrow{0}0), S_2=(0\xrightarrow{0}\kk)$ and $S_{12}=(\kk\xrightarrow{\id}\kk)$. \end{exercise}
 Fix a stability condition $\zeta=(\zeta_1,\zeta_2)$ satisfying $\arg(\zeta_1)<\arg(\zeta_2)$. Given a $\zeta$-semistable representation $V\cong S_1^{m_1}\oplus S_{12}^{m_{12}}\oplus S_2^{m_2}$ with $d_1=m_1+m_{12}$ and $d_2=m_2+m_{12}$, two of the multiplicities $m_1,m_{12},m_2$ must be zero. Moreover, $m_{12}$ must be zero, too, since $S_2\hookrightarrow S_{12}$ destabilizes $S_{12}$ and the latter cannot be (semi)stable. Thus, $V\cong S_1^{d_1}$ or $V\cong S_2^{d_2}$. In particular, the category  of representations of dimension vector $(d_1,0)$ respectively $(0,d_2)$ is isomorphic to the category of representations of the quiver $Q^{(0)}$ with one vertex and no loop. Using $R(\NN^{Q_0})\cong R[[t_1,t_2]]$ we, therefore, obtain
 \[ \dim_!I^W(\unit_\Mst)= A^{(0)}(t_2)\ast A^{(0)}(t_1)=\Sym\left(\frac{t_2}{\LL^{1/2}-\LL^{-1/2}}\right)\ast \Sym\left(\frac{t_1}{\LL^{1/2}-\LL^{-1/2}}\right) .\]
 On the other hand, if we assume $\arg(\zeta_1)>\arg(\zeta_2)$, the representation $S_{12}$ is $\zeta$-stable, and $V\cong S_{12}^{d_1}$ is another class of semistable objects. Thus,
 \begin{eqnarray*} \lefteqn{\dim_!I^W(\unit_\Mst)\;=\; A^{(0)}(t_1)\ast A^{(0)}(t_1t_2)\ast A^{(0)}(t_2) }\\ &=&\Sym\left(\frac{t_1}{\LL^{1/2}-\LL^{-1/2}}\right)\ast\Sym\left(\frac{t_1t_2}{\LL^{1/2}-\LL^{-1/2}}\right)\ast  \Sym\left(\frac{t_2}{\LL^{1/2}-\LL^{-1/2}}\right),\end{eqnarray*}
 and the so-called quantum dilogarithm identity \[ A^{(0)}(t_2)\ast A^{(0)}(t_1)=A^{(0)}(t_1)\ast A^{(0)}(t_1t_2)\ast A^{(0)}(t_2)\] follows.
 Let us  also consider the case $\arg(\zeta_1)=\arg(\zeta_2)$. Then, all representations are semistable, and 
 \[ \dim_!I_W(\unit_\Mst)=\Sym\left(\frac{\sum_{(d_1,d_2)\not=(0,0)} \DT(Q)^\zeta_{(d_1,d_2)}\,t_1^{d_1}t_2^{d_2}}{\LL^{1/2}-\LL^{-1/2}}\right) \]
 allows the computation of the Donaldson--Thomas invariants as the left hand side of the equation is already known by the previous two cases. For example, comparing the coefficients of $t_1t_2$ yields
 \[ \frac{\LL^{1/2}}{(\LL^{1/2}-\LL^{-1/2})^2}=\frac{\DT(Q)^\zeta_{(1,1)}}{\LL^{1/2}-\LL^{-1/2}}, \]
 and $\DT(Q)^\zeta_{(1,1)}=\LL/(\LL-1)$ follows. Note that $\DT(Q)^\zeta_{(1,1)}\in \Ka_0(\Sch_\kk)[\LL^{1/2}]^{st}$ cannot be lifted under the map $\Ka_0(\Sch_\kk)[\LL^{-1/2}] \longrightarrow \Ka_0(\Sch_\kk)[\LL^{1/2}]^{st}$ which does not  contradict Theorem \ref{main_result_1} as $\zeta$ is not generic. In particular, the assumption of being generic cannot be dropped in Theorem \ref{main_result_1} and \ref{main_result_2}.  \\
Let us finally say some words about the Donaldson--Thomas functions $\DTS(Q)^\zeta_d$. For arbitrary stability condition $\zeta$ there is a unique polystable object of given dimension vector $d$ as the previous discussion shows. In particular, every stability condition is geometric with $\Msp^{\zeta-ss}_d=\Msp^{ssimp}_d=\Spec\kk$, and $\DTS(Q)^\zeta_d=\DT(Q)^\zeta_d$ follows for all $d\in \NN^{Q_0}$.  
\end{example}

\bibliographystyle{plain}
\bibliography{Literatur}

\vfill

\textsc{\small S. Meinhardt: Fachbereich C, Bergische Universit\"at Wuppertal, Gau{\ss}stra{\ss}e 20, 42119 Wuppertal, Germany}\\
\textit{\small E-mail address:} \texttt{\small meinhardt@uni-wuppertal.de}\\

\end{document}